\numberwithin{equation}{section}
\newtheorem{thrm}{Theorem}[section]
\newtheorem{lemma}[thrm]{Lemma}
\newtheorem{cor}[thrm]{Corollary}
\newtheorem{dfn}[thrm]{Definition}
\newtheorem{rmrk}[thrm]{Remark}
\newtheorem{conv}[thrm]{Convention}
\newcommand{\Hn}{\mathbb{H}^n}
\newcommand{\QH}{\boldsymbol {G\,(\mathbb{H})}}
\newcommand{\abs}[1]{\lvert #1 \rvert}
\newcommand{\e}{\textbf {e}}
\def\Re{{\frak R}{\frak e}\,}
\def\gr{\nabla f}
\def\bi{\nabla}
\begin{document}

\begin{abstract}

We prove quaternionic contact versions of two of  Obata's sphere theorems. On a compact quaternionic contact (qc) manifold of dimension bigger than seven and satisfying a Lichnerowicz type lower bound estimate we show that if the first positive eigenvalue of the sub-Laplacian takes the smallest possible value then, up to a homothety of the qc structure, the manifold is qc equivalent to the standard 3-Sasakian  sphere. The same conclusion is shown to hold on a non-compact qc manifold which is complete with respect to the associated Riemannian metric assuming the existence of a function with traceless horizontal Hessian. The third result of the paper is a qc version of Liouville's theorem showing that a qc-conformal diffeomorphism between open connected sets of the 3-Sasakian sphere is a restriction of an element of the qc-conformal automorphism group of the sphere.
\end{abstract}

\keywords{quaternionic contact structures, qc conformal flatness, qc
conformal curvature, Einstein metrics, sub-Laplacian, Obata sphere theorem}
\subjclass[2010]{53C26, 53C25, 58J60}
\title[The Obata sphere theorems on a quaternionic contact manifold]{The
Obata sphere theorems on a quaternionic contact manifold of dimension bigger than seven}
\date{\today }
\author{S. Ivanov}
\address[Stefan Ivanov]{University of Sofia, Faculty of Mathematics and
Informatics, blvd. James Bourchier 5, 1164, Sofia, Bulgaria}
\email{ivanovsp@fmi.uni-sofia.bg}
\author{A. Petkov}
\address[Alexander Petkov]{University of Sofia, Faculty of Mathematics and
Informatics, blvd. James Bourchier 5, 1164, Sofia, Bulgaria}
\email{a\_petkov\_fmi@abv.bg}
\author{D. Vassilev}
\address[Dimiter Vassilev]{ Department of Mathematics and Statistics\\
University of New Mexico\\
Albuquerque, New Mexico, 87131-0001}
\email{vassilev@math.unm.edu}
\maketitle
\tableofcontents

\setcounter{tocdepth}{2}

\section{Introduction}

Motivated by the classical Lichnerowicz \cite{Li} and Obata \cite{O3} theorems,  earlier papers of the authors \cite{IPV1,IPV2} established a Lichnerowcz type lower bound estimate for the first eigenvalue of the sub-Laplacian on a compact quaternionic contact (qc) manifold. The case of equality in the lower bound estimate (Obata-type theorem)  was settled  in the special case of a 3-Sasakian compact manifold where it was shown that the lower bound {for the first eigenvalue of the sub-Laplacian} is achieved  if and only if  the 3-Sasakian manifold is isometric  to the standard 3-Sasakian sphere. {Quaternionic contact (qc) structures were introduced by O. Biquard \cite{Biq1} and are modeled on  the conformal boundary at infinity of the quaternionic hyperbolic space. {Thus, manifolds equipped with a  qc structure are examples of  sub-Riemannian geometries. The (locally)  3-Sasakian manifolds  were  characterized in \cite{IMV, IV1} by the vanishing of the torsion tensor of the Biquard connection.}
The qc geometry was a crucial geometric tool in finding the extremals and the best constant in the $L^2$ Folland-Stein Sobolev-type embedding, \cite{F2,FS},  completely described   on
the  quaternionic Heisenberg groups,  \cite{IMV1,IMV2}. }

In this paper we prove the full qc
version of Obata's results  for a general qc manifold of dimension bigger than seven. {We   find that  the equality case of
Lichnerowcz' type inequality on a compact qc manifold  of dimension at least eleven can be achieved only on the 3-Sasakian spheres. More general, we} show that on a complete
with respect to the associated Riemannian metric qc manifold a
certain (horizontal) Hessian equation, cf. \eqref{eq7}, allows a non-trivial solution if and
only if the manifold is qc homothetic to the standard 3-Sasakian sphere.

The qc seven dimensional case was considered in \cite{IPV2}, however, the
general qc Obata results in dimension seven remain open.

Turning to some details, let us recall the mentioned classical results. Using
the classical Bochner-Weitzenb\"ock formula Lichnerowcz \cite{Li} showed
that on a compact Riemannian manifold $(M,h)$ of dimension $n$ for which the
Ricci curvature is greater than or equal to that of the round unit $n$%
-dimensional sphere $S^n(1)$, i.e., $
Ric(X,Y)\geq (n-1)h(X,Y) $
 the first positive eigenvalue $\lambda_1$ of the (positive) Laplace operator
is greater than or equal to the first eigenvalue of the sphere, $%
\lambda_1\geq n.$ Subsequently Obata \cite{O3} proved that equality is
achieved if and only if the Riemannian manifold is isometric to $S^n(1)$ by
noting that the trace-free part of the Riemannian Hessian of an
eigenfunction $f$ with eigenvalue $\lambda=n$ vanishes, i.e., it satisfies
the system
\begin{equation}  \label{clasob}
(\nabla^h)^2 f = -fh
\end{equation}
after which he defines an isometry using analysis based on the geodesics and
Hessian comparison of the distance function from a point.

In fact, Obata showed that
on a complete Riemannian manifold $(M,h)$ equation \eqref{clasob} allows a
non-constant solution if and only if the manifold is isometric to the unit
sphere. In this case, the eigenfunctions corresponding to the first
eigenvalue are the solutions of \eqref{clasob}. 
Later, Gallot \cite{Gallot79} generalized these results to statements
involving the higher eigenvalues and corresponding eigenfunctions of the
Laplace operator.

The interest in relations between the spectrum of the Laplacian and
geometric quantities justified the interest in Lichnerowicz-Obata type
theorems in other geometric settings such as Riemannian foliations (and the
eigenvalues of the basic Laplacian) \cite{LR98,LR02}, \cite{JKR11} and \cite%
{PP11}, to CR geometry (and the eigenvalues of the sub-Laplacian) \cite{Gr},
\cite{Bar}, \cite{CC07,CC09a,CC09b}, \cite{ChW}, \cite{Chi06}, \cite{LL}, and to general
sub-Riemannian geometries, see \cite{Bau2} and \cite{Hla}. In the CR case,
Greenleaf \cite{Gr} gave a version of Lichnerowicz' result showing that if a
compact strongly pseudo-convex CR manifold $M$ of dimension $2n+1$, $n\geq 3$
satisfies a Lichnerowicz type inequality
\begin{equation*}
Ric(X,Y)+ 4A(X,JY)\geq (n+1) g(X,X)
\end{equation*}
for all horizontal vectors $X$, where $Ric$ and $A$ are, correspondingly,
the Ricci curvature and the Webster torsion of the Tanaka-Webster connection
(in the notation from \cite{IVZ,IV2}), then the first positive eigenvalue $%
\lambda_1$ of the sub-Laplacian satisfies the inequality $\lambda_1\geq {n}$%
. The standard (Sasakian) CR structure on the sphere achieves equality in
this inequality. Following \cite{Gr} the above cited results on a compact CR
manifold focused on adding a corresponding inequality for $n=1,2$ or
characterizing the equality case mainly in the vanishing Webster-torsion
case (the Sasakian case). The general case on a compact CR manifold
satisfying the Lichnerowicz type condition was proved in \cite{LW,LW1} using
the results and the method of \cite{IV3} while introducing a new integration
by parts step proving the vanishing of the Webster torsion assuming the
first eigenvalue is equal to $n$ (for the three dimensional case see \cite%
{IV4}). On the other hand, a generalization of the {Obata result in the complete non-compact case} was
achieved in \cite{IV3}, where the standard Sasakian structure on the unit
sphere was characterized through the existence of a non-trivial solution of
a (horizontal) Hessian equation on a complete with respect to the associated Riemannian metric CR manifold with a divergence free Webster
torsion. To the best of our knowledge the case of a general torsion
remains still open.

The main purpose of this paper is to prove the qc version of both
  results  of Obata under no extra assumptions on the Biquard'
torsion {   when the dimension of the qc manifold is at least eleven}, cf. Theorem~\ref{main1} and Theorem~\ref{main2}.

The quaternionic contact version of the Lichnerowicz' result was found in
\cite{IPV1} in dimensions grater than seven and in \cite{IPV2} in the seven
dimensional case. The following result of \cite{IPV1} gives a lower bound on
the positive eigenvalues of the sub-Laplacian on a qc manifold.

\begin{thrm}[\protect\cite{IPV1}]
\label{mainpan} Let $(M,\eta,g,\mathbb{Q})$ be a compact quaternionic
contact manifold of dimension $4n+3>7$. Suppose that there is a positive
constant $k_0$ such that the qc Ricci tensor and torsion of the Biquard
connection satisfy the inequality
\begin{equation}  \label{condm}
Ric(X,X)+\frac{2(4n+5)}{2n+1}T^0(X,X)+ \frac{6(2n^2+5n-1)}{(n-1)(2n+1)}%
U(X,X)\ge k_0g(X,X).
\end{equation}
Then any eigenvalue $\lambda$ of the sub-Laplacian $\triangle$ satisfies the
inequality
\begin{equation*}
\lambda \ge \frac{n}{n+2}k_0
\end{equation*}
\end{thrm}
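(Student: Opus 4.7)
The plan is to follow the classical Lichnerowicz pattern, transported to the sub-Riemannian qc setting via the Biquard connection. Let $f$ be a smooth function on $M$ with $\triangle f=-\lambda f$, $\lambda>0$. The starting point is an identity of the schematic form
\begin{equation*}
\tfrac12\triangle|\nabla f|^2 = |\nabla^2 f|^2+\langle\nabla f,\nabla\triangle f\rangle+Ric(\nabla f,\nabla f)+\Phi(\nabla f,f_1,f_2,f_3),
\end{equation*}
where $\nabla$ is the horizontal gradient, $\nabla^2 f$ is the horizontal Hessian with respect to the Biquard connection, $f_\alpha=\xi_\alpha f$ are the Reeb derivatives, and $\Phi$ collects the cross terms produced by the failure of $\nabla$ and $\xi_\alpha$ to commute. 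Integrating this identity over the compact manifold $M$ with respect to the natural qc volume kills $\tfrac12\triangle|\nabla f|^2$ and converts $\langle\nabla f,\nabla\triangle f\rangle$ into $-\lambda^2\int f^2$.

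Next I would bound the Hessian term from below by decomposing $\nabla^2 f$ according to its $Sp(n)Sp(1)$-type. Writing $\nabla^2 f$ as the sum of its trace, its symmetric traceless $I_\alpha$-Hermitian component, the symmetric $I_\alpha$-anti-Hermitian component, and its antisymmetric part, the trace contributes $(\triangle f)^2/(4n)=\lambda^2 f^2/(4n)$, while the remaining pieces contribute nonnegative quadratic forms that will be matched against the vertical contributions $\int|\nabla f_\alpha|^2$ and $\int f_\alpha\,\xi_\alpha f$ coming from $\Phi$. Each vertical integral is rewritten by integration by parts using the fundamental commutators $[\triangle,\xi_\alpha]$ on a qc manifold, which by the structural identities established in \cite{IMV,IV1} are governed precisely by the torsion tensors $T^0$ and $U$ and by $Ric$. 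This produces a family of integrated identities, one for each Reeb direction, with free real weights to be chosen at the end.

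Collecting all these identities with the chosen weights and invoking the hypothesis \eqref{condm}, every integral of $Ric+\tfrac{2(4n+5)}{2n+1}T^0+\tfrac{6(2n^2+5n-1)}{(n-1)(2n+1)}U$ evaluated on $\nabla f$ is bounded below by $k_0\int|\nabla f|^2=k_0\lambda\int f^2$. The free weights are then fixed so that all remaining indefinite cross-terms cancel, yielding an inequality of the form $a\lambda^2\int f^2\ge b\,k_0\,\lambda\int f^2$ whose simplification gives the sharp bound $\lambda\ge\frac{n}{n+2}k_0$.

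The main obstacle is algebraic and representation-theoretic rather than analytic. The specific coefficients $\tfrac{2(4n+5)}{2n+1}$ and $\tfrac{6(2n^2+5n-1)}{(n-1)(2n+1)}$ in \eqref{condm} are forced by the unique weighting under which the indefinite cross terms cancel, and pinning them down demands precise bookkeeping of the $Sp(n)Sp(1)$-irreducible decomposition of the horizontal Hessian in tandem with the commutator identities from \cite{IMV,IV1}. The factor $(n-1)$ in the denominator signals the breakdown of this decomposition when $n=1$, which is the reason for the dimension hypothesis $4n+3>7$; the seven-dimensional case requires a modified argument and was handled separately in \cite{IPV2}.
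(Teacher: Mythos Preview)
Your outline is essentially correct and follows the original strategy of \cite{IPV1}: integrate the qc Bochner identity, decompose $|\nabla^2 f|^2$ according to $Sp(n)Sp(1)$-type, and absorb the vertical cross terms by a judicious linear combination of auxiliary integral identities coming from the commutation of $\xi_s$ with horizontal derivatives.  The paper, however, presents a \emph{different} proof (Appendix, Section~4.2) that replaces the ad hoc weight-balancing step by a single structural object: the $P$-form $P_f$ of Definition~\ref{d:def P}.  The key facts are the local identity $(\nabla_{e_a}(\nabla^2 f)_{[3][0]})(e_a,X)=\frac{n-1}{4n}P_f(X)$ and its integrated consequence $-\int_M P_f(\nabla f)=\frac{4n}{n-1}\int_M |(\nabla^2 f)_{[3][0]}|^2\ge 0$, valid precisely when $n>1$.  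The vertical term $\int_M\sum_s\nabla^2 f(\xi_s,I_s\nabla f)$ is then expressed directly through $P_f(\nabla f)$ via \cite[Lemma~3.2]{IPV2}, and after one more substitution the Lichnerowicz combination \eqref{condm} appears with no free parameters left to tune.

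What each approach buys: yours is the more classical Bochner template and makes the r\^ole of the specific coefficients in \eqref{condm} transparent as the unique cancellation point, but the bookkeeping you flag as ``the main obstacle'' is genuinely heavy.  The paper's $P$-function route packages all of that algebra into the single non-negativity statement \eqref{pppp}; it also isolates exactly where $n>1$ enters (the factor $\frac{n-1}{4n}$ in \eqref{panon}) and, as a bonus, immediately yields the Hessian equation \eqref{eq07} in the equality case (Corollary~\ref{imp}), which is the input needed for the Obata-type Theorem~\ref{main2}.
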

{The equality case of Theorem~\ref{mainpan} is achieved on the 3-Sasakian sphere.}
It was shown in \cite{IMV2}, see also \cite{ACB}, that the eigenspace of the
first non-zero eigenvalue of the sub-Laplacian on the unit 3-Sasakian sphere
in Euclidean space is given by the restrictions to the sphere of all linear
functions.

The main results of this paper are the following three theorems.

\begin{thrm}
\label{main1} Let $(M,\eta,g,\mathbb{Q})$ be a compact quaternionic contact
manifold of dimension $4n+3>7$ whose qc-Ricci tensor and torsion of the
Biquard connection satisfy the inequality \eqref{condm}. Then, the first
positive eigenvalue $\lambda$ of the sub-Laplacian $\triangle$ satisfies the
equality
\begin{equation}  \label{feing}
\lambda = \frac{n}{n+2}k_0
\end{equation}
if and only if the qc manifold $(M,g,\mathbb{Q})$ is  qc-homothetic to the
unit (4n+3)-dimensional 3-Sasakian sphere.
\end{thrm}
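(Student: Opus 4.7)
The plan is to reduce Theorem~\ref{main1} to the complete non-compact version (Theorem~\ref{main2}) by extracting a pointwise Hessian identity from the equality case in the Lichnerowicz bound, together with verifying directly that the standard 3-Sasakian sphere saturates the inequality.

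First, I would handle the easy direction. On the unit 3-Sasakian sphere $S^{4n+3}\subset\mathbb{H}^{n+1}$ the Biquard torsion tensors $T^0$ and $U$ vanish, the qc Ricci tensor is a constant multiple of $g$, and the inequality \eqref{condm} holds with equality for the sphere's value of $k_0$. By \cite{IMV2,ACB} the restrictions of ambient linear functions are eigenfunctions of $\triangle$ realizing the eigenvalue $\frac{n}{n+2}k_0$. A homothety of the qc structure then gives equality in \eqref{feing} for any compact qc manifold that is qc-homothetic to the sphere.

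For the main direction, I would revisit the proof of Theorem~\ref{mainpan} from \cite{IPV1}. That proof integrates a qc Bochner-Weitzenb\"ock identity applied to an eigenfunction $f$ of $\triangle$ with eigenvalue $\lambda$, producing an integrated inequality which, after using \eqref{condm}, forces $\lambda\geq \frac{n}{n+2}k_0$. The essential point is that this integrated inequality is a sum of manifestly non-negative quantities: (i) a Cauchy-Schwarz deficit between the full horizontal Hessian and its trace, and (ii) terms comparing the symmetric-traceless and the $I_s$-commuting parts of $\nabla^2 f$ to the appropriate combinations of $T^0$, $U$ and $f\cdot g$. When $\lambda=\frac{n}{n+2}k_0$, equality propagates through all of these, forcing the horizontal Hessian of $f$ to take the trace-free form prescribed by \eqref{eq7}, i.e., a non-trivial solution of the horizontal Obata-type Hessian equation. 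This is the main technical step and the place where the dimension restriction $4n+3>7$ is used in an essential way, as in \cite{IPV1}.

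Once $f$ is shown to satisfy \eqref{eq7}, compactness implies completeness of $(M,g_\eta)$, so the hypotheses of Theorem~\ref{main2} hold, and $(M,\eta,g,\mathbb{Q})$ is qc-homothetic to the standard 3-Sasakian sphere. The main obstacle is the careful bookkeeping in step two: one must track precisely which Cauchy-Schwarz and algebraic inequalities occur in the integrated Bochner formula of \cite{IPV1}, and recognize that their simultaneous saturation is exactly equivalent to \eqref{eq7}; none of the auxiliary vanishing conclusions (such as vanishing of the Biquard torsion, or of the vertical derivatives of $f$) needs to be proved separately, since Theorem~\ref{main2} absorbs all subsequent work.
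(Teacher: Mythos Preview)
Your proposal is correct and follows essentially the same route as the paper: extract the horizontal Hessian identity \eqref{eq07} from the equality case of the integrated Bochner inequality (the paper does this via Corollary~\ref{imp} in the Appendix, citing also \cite[Remark~4.1]{IPV1}), normalize by a qc-homothety so that $k_0=4(n+2)$ turns \eqref{eq07} into \eqref{eq7}, and then invoke Theorem~\ref{main2}. The only point worth tightening is that the non-negative term you must also force to vanish is the $P$-function contribution (cf.\ \eqref{pppp}), whose non-negativity for $n>1$ is exactly where the dimension restriction enters; once that and the two projection inequalities \eqref{coshy1}, \eqref{coshy3} are saturated, \eqref{eq07} follows.
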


According to \cite[Remark~4.1]{IPV1}, under the conditions of
Theorem~\ref{mainpan}, an eigenfunction $f$ corresponding to the
the first non-zero eigenvalue as in \eqref{feing}, $\triangle
f=\frac{n}{n+2}k_0\,f$  satisfies a linear PDE system, namely, the
horizontal
Hessian of  $f$ is given by (see Corollary~%
\ref{imp} in the Appendix)
\begin{equation}
\nabla df(X,Y)=-\frac{1}{4(n+2)}k_0fg(X,Y)-\sum_{s=1}^{3}df(\xi _{s})\omega
_{s}(X,Y).  \label{eq07}
\end{equation}
This brings us to our second main result.
\begin{thrm}
\label{main2} Let $(M,\eta,g,\mathbb{Q})$ be a quaternionic contact manifold
of dimension $4n+3>7$ which is complete with respect to the associated Riemannian
metric
\begin{equation}  \label{riem1}
h=g+(\eta_1)^2+(\eta_2)^2+(\eta_3)^2.
\end{equation}
Suppose there exists a non-constant smooth function $f$ whose horizontal
Hessian satisfies
\begin{equation}
\nabla df(X,Y)=-fg(X,Y)-\sum_{s=1}^{3}df(\xi _{s})\omega _{s}(X,Y).
\label{eq7}
\end{equation}
Then the qc manifold $(M,\eta,g,\mathbb{Q})$ is qc homothetic to the unit
(4n+3)-dimensional 3-Sasakian sphere.
\end{thrm}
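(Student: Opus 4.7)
The plan is to derive from \eqref{eq7} enough rigidity to force the torsion of the Biquard connection to vanish, whence $M$ becomes locally 3-Sasakian, and then to identify $M$ with the standard round sphere via the associated Riemannian metric $h$ using the completeness hypothesis. First I would differentiate \eqref{eq7} once using the Biquard connection. Since $\nabla g=0$ and the covariant derivatives of the fundamental $2$-forms $\omega_s$ with respect to the Biquard connection are explicit, the third-order derivatives of $f$ become expressible in terms of $f$, $df$, the vertical derivatives $\xi_s(f)$, and horizontal derivatives of the latter. Applying the Ricci commutation identity
\[
\nabla^2 df(Z;X,Y)-\nabla^2 df(Z;Y,X)=-R(X,Y)df(Z)-\nabla df\bigl(T(X,Y),Z\bigr),
\]
together with the first Bianchi identity of the Biquard connection, produces algebraic constraints coupling the torsion components $T^0$ and $U$ to the gradient $\nabla f$. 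The target of this stage is to conclude pointwise that $T^0(\nabla f,\cdot)=0$ and $U(\nabla f,\cdot)=0$, playing in the non-compact setting the role of the integration-by-parts step used in \cite{IPV1} and of the divergence-free hypothesis imposed in the CR setting of \cite{IV3}.

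Since the manifold is non-compact I would next propagate these identities pointwise. Completeness of $(M,h)$ allows one to use global $h$-geodesics, and \eqref{eq7} forces a Bochner-type identity for $|\nabla f|^2+f^2+\sum_s(\xi_s f)^2$ from which one can show that $f$ is bounded and attains its extrema at critical points. Taylor-expanding the derived algebraic identities along horizontal geodesics emanating from a critical point, combined with a connectedness argument on the open dense set where $\nabla f\neq 0$, should extend the vanishing of the torsion to all of $M$. The characterization of \cite{IMV, IV1} then identifies $M$ as a locally 3-Sasakian manifold.

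With the torsion vanishing, the Reeb fields $\xi_s$ are Killing for $h$ and the Biquard connection differs from the Levi-Civita connection of $h$ by explicit contact correction terms. Differentiating \eqref{eq7} vertically yields algebraic formulas for $\xi_s\xi_t(f)$ in terms of $f$ together with a compatible expression for $X(\xi_s(f))$, and combining the horizontal and vertical data shows that the full Riemannian Hessian of $f$ with respect to $h$ is a scalar multiple of $h$ itself. The classical Obata theorem \cite{O3} applied to $(M,h)$ then produces an isometry onto the unit sphere $S^{4n+3}(1)$, and the preserved qc structure is forced to agree, up to a qc-homothety, with the standard 3-Sasakian structure on the sphere.

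The main obstacle I anticipate is the first stage: proving the vanishing of both torsion components $T^0$ and $U$ in the absence of compactness. This is substantially more intricate than its CR analog, since the three almost complex structures, the two $\mathrm{Sp}(n)\mathrm{Sp}(1)$-irreducible pieces of the torsion, and the three vertical directions interact in a single algebraic system; the dimensional hypothesis $n\geq 2$ should enter precisely where one inverts this system, paralleling the restriction already present in Theorem~\ref{main1}.
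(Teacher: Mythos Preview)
Your overall architecture is right---kill the torsion, pass to the Riemannian Hessian, invoke Obata---but there is a genuine gap at the very end, and the mechanism you sketch for the torsion step is not the one that actually works.

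\textbf{The final identification is not automatic.} Obata's theorem gives you a Riemannian isometry $(M,h)\to S^{4n+3}(1)$, but such an isometry has no reason to carry the horizontal distribution $H$ or the bundle $\mathbb{Q}$ to the standard ones on the sphere; ``the preserved qc structure is forced to agree'' is exactly the statement that needs proof. The paper handles this in two further steps. First, once $T^0=U=0$ and $S=2$, the constant-sectional-curvature formula for $R^h$ and the explicit relation between the Biquard and Levi-Civita curvatures show that the qc-conformal curvature $W^{qc}$ vanishes, so $M$ is locally qc-conformally flat. Second---and this is a separate theorem (Theorem~\ref{l:Liouville}) whose proof uses the classification of qc-Einstein conformal factors on the Heisenberg group---a Liouville-type result shows that every local qc-conformal map between pieces of the sphere extends to a global element of $PSp(n+1,1)$; a monodromy argument then upgrades the local qc-conformal equivalence to a global one. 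Comparing the pulled-back metric with $g$ finally forces the conformal factor to be $1$. Without the Liouville step your argument stops at ``Riemannian sphere with \emph{some} qc structure''.

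\textbf{The torsion-vanishing step does not need geodesic propagation.} Your plan to Taylor-expand along horizontal geodesics and use a Bochner identity for $|\nabla f|^2+f^2+\sum_s(\xi_s f)^2$ is not what makes the argument close; in fact no propagation argument is used. The paper first shows that $f$ satisfies an \emph{elliptic} equation for the full Laplacian $\triangle^h$ (Lemma~\ref{rrlem}), so by Aronszajn's unique continuation $|\nabla f|\neq 0$ on a dense set. On that set the Ricci identities together with the $[-1]$-component of the curvature let one write $T^0$ and $U$ \emph{pointwise} as explicit expressions in $df$ with a single scalar coefficient $U(\nabla f,\nabla f)/|\nabla f|^4$ (Lemma~\ref{l:torsions}); the contracted Bianchi identity then pins down the covariant derivatives of $T^0$ and $U$ in the same fashion. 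Finally one computes $\nabla^3 f(\xi_i,I_i\nabla f,\nabla f)-\nabla^3 f(I_i\nabla f,\nabla f,\xi_i)$ in two independent ways and obtains $fU(\nabla f,\nabla f)=0$, hence $U(\nabla f,\nabla f)=0$ on a dense set, hence $T^0=U=0$ everywhere by continuity. The intermediate target is therefore not $T^0(\nabla f,\cdot)=0$ as you wrote, but the scalar $U(\nabla f,\nabla f)=0$; and the hypothesis $n>1$ enters exactly when one solves the linear system expressing $T^0(Z,\nabla f)$ and $U(Z,\nabla f)$ in terms of $U(\nabla f,\nabla f)\,df(Z)$.
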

Clearly Theorem~\ref{main2} implies Theorem~\ref{main1} since any
Riemannian metric on a compact manifold is complete and a qc-homothety, cf. Definition \ref{d:3-ctct auto}, allows us
to reduce to the case  $k_0=4(n+2)$, which turns \eqref{eq07} in \eqref{eq7}.

 We achieve the proof of Theorem~\ref{main2} by showing first that $M$ is isometric  to the unit sphere $S^{4n+3}$ and then that $M$ is qc-equivalent to the standard 3-Sasakian structure on $S^{4n+3}$.  To this effect we show that the torsion of the Biquard connection vanishes and in this case the Riemannian Hessian
satisfies \eqref{clasob} after which we invoke the classical Obata theorem showing that $M$ is isometric  to the unit sphere. In order to prove the qc-equivalence part we show that  the qc-conformal curvature vanishes,  which gives
the local qc conformal equivalence with the 3-Sasakian sphere due to \cite[Theorem~1.3]{IV},  and then use the below  Liouville-type result Theorem \ref{l:Liouville}, which implies  the existence of a global qc-conformal  map between $M$ and  the 3-Sasakian sphere, cf. sub-section \ref{ss:qc conf flat}.
\begin{thrm}\label{l:Liouville}
Let $\Sigma \subset {S^{4n+3}}$ be a connected open subset of ${S^{4n+3}}$. If $F:\Sigma\rightarrow {S^{4n+3}}$ is a qc-conformal transformation then $F$ is the restriction to $\Sigma$  of an element of $PSp(n+1,1)$ - the isometry group of the quaternionic hyperbolic space.
\end{thrm}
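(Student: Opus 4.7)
The strategy is a qc analog of the classical Liouville theorem for conformal diffeomorphisms of $S^{n}$, $n\ge 3$. The key inputs are: (a) the 3-Sasakian sphere $S^{4n+3}$ is qc-conformally flat, as its qc-conformal curvature $W^{qc}$ vanishes, cf. \cite[Theorem~1.3]{IV}; and (b) $PSp(n+1,1)$ acts on $S^{4n+3}$ as the full group of qc-conformal automorphisms, realized as the conformal boundary action of the isometry group of quaternionic hyperbolic space. The plan is to proceed in three steps: establish real-analyticity of $F$, prove that near each point of $\Sigma$ the map $F$ coincides with the restriction of an element of $PSp(n+1,1)$, and then glue these local identifications into a global one.

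For the regularity and local rigidity step, I would first invoke sub-elliptic hypoellipticity for the PDE system characterizing qc-conformal maps: $F$ preserves the horizontal distribution and the quaternionic bundle $\mathbb{Q}$ up to a conformal factor, which itself satisfies a qc-Yamabe-type equation, so standard regularity results yield real-analyticity of $F$. Composing with elements of $PSp(n+1,1)$ at the source and target and using the quaternionic Cayley transform---a qc-conformal diffeomorphism between $S^{4n+3}\setminus\{\mathrm{pt}\}$ and the quaternionic Heisenberg group $\QH$---the problem reduces to the following local claim: a real-analytic qc-conformal germ $\tilde F\colon(\QH,0)\to(\QH,0)$ is the germ at $0$ of an element of $PSp(n+1,1)$, where the latter acts globally on $\QH\cup\{\infty\}\cong S^{4n+3}$. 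This is the qc analog of the Chern--Moser/Tanaka rigidity for flat CR structures, and it is the main obstacle of the proof. One route is to show---either from the parabolic/Cartan-connection theory for qc structures, or by a direct Taylor expansion of $\tilde F$ in graded coordinates on $\QH$ using the Biquard connection structure equations together with $W^{qc}=0$---that the Lie algebra of germs of qc-conformal vector fields on $\QH$ is finite-dimensional and isomorphic to $\mathfrak{sp}(n+1,1)$. A jet-determination argument then shows that $\tilde F$ is determined by its $k$-jet at $0$ for some finite $k=k(n)$, and transitivity of the $PSp(n+1,1)$-action on admissible $k$-jets produces $\Phi\in PSp(n+1,1)$ matching that $k$-jet; uniqueness forces $\tilde F=\Phi$ as germs.

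Globalization is then straightforward. The local rigidity produces, for each $p\in\Sigma$, an open neighborhood $U_p$ and an element $\Phi_p\in PSp(n+1,1)$ with $F|_{U_p}=\Phi_p|_{U_p}$. Two elements of $PSp(n+1,1)$ that agree on a nonempty open subset of $S^{4n+3}$ must coincide (by real-analyticity of the action and connectedness of $S^{4n+3}$), hence the $\Phi_p$ agree on overlaps $U_p\cap U_q$ and, by connectedness of $\Sigma$, assemble into a single $\Phi\in PSp(n+1,1)$ with $F=\Phi|_\Sigma$, yielding the theorem.
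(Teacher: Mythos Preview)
Your proposal outlines the standard parabolic-geometry route to Liouville-type rigidity (finite jet determination, finite-dimensional automorphism algebra, analytic continuation). This is a legitimate strategy and the globalization step is fine, but the local rigidity step is left essentially as a black box: you defer to ``Cartan-connection theory for qc structures'' or to an unspecified Taylor-expansion computation, and you yourself flag this as ``the main obstacle of the proof.'' So as written the argument is a correct outline rather than a proof.

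The paper's proof is genuinely different and considerably more direct. After the same Cayley-transform reduction to $\QH$, it does \emph{not} go through jet determination or the infinitesimal automorphism algebra at all. Instead it observes that both $\tilde\Theta$ and $\Theta=\tilde F^*\tilde\Theta=\frac{1}{2\mu}\tilde\Theta$ are qc-Einstein, and then invokes the explicit classification from \cite[Theorem~1.1]{IMV} of all conformal factors $\mu$ on $\QH$ for which $\frac{1}{2\mu}\tilde\Theta$ is qc-Einstein: such $\mu$ are necessarily quartic polynomials of the specific form
\[
\mu(q,\omega)=c_0\Big[(\sigma+|q+q_0|^2)^2+|\omega+\omega_o+2\,\text{Im}\,q_o\bar q|^2\Big].
\]
The Yamabe equation then forces the scalar curvature to be $128n(n+2)c_0\sigma$, and since the pullback structure has zero qc-scalar curvature one gets $\sigma=0$. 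This pins down $\mu$ completely and shows $\tilde F$ is a composition of a Heisenberg translation, an inversion (shown separately in Lemma~\ref{l:inversion qc} to be qc-conformal), and a dilation---all elements of $PSp(n+1,1)$.

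What each approach buys: the paper's argument replaces the abstract jet-rigidity machinery by a single concrete PDE classification already available in \cite{IMV}, so it is short, self-contained within the qc literature, and avoids any appeal to analyticity or hypoellipticity. Your approach, if carried out, would be more conceptual and would situate the result in the general framework of parabolic geometries, but it requires either importing or reproving substantial external machinery (the qc Cartan connection, jet determination in this signature) that you have not supplied.
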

The proof of Theorem \ref{l:Liouville} and background to this result can be found in sub-sections \ref{ss:qc conf flat} and \ref{ss:Liouville}.

A version of Theorem~\ref{mainpan} when $n=1$ was established in \cite[%
Theorem~1.1]{IPV2} assuming the positivity of the $P$-function of any
eigenfunction. In the Appendix, for completeness, we recall the notion of
the $P$-function introduced in \cite{IPV2} and give a different proof of
Theorem~\ref{mainpan} based on the positivity of the $P$-function in the
case $n>1$ established in \cite[Theorem~3.3]{IPV2}. As a corollary of the
proof, we show the validity of \eqref{eq07} for any eigenfunction of the sub-Laplacian with eigenvalue given by \eqref{feing}.

\begin{conv}
\label{conven} \hfill\break\vspace{-15pt}

\begin{enumerate}[a)]
\item We shall use $X,Y,Z,U$ to denote horizontal vector fields, i.e. $%
X,Y,Z,U\in H$.

\item  $\{e_1,\dots,e_{4n}\}$ denotes a local orthonormal basis of the
horizontal space $H$.

\item The summation convention over repeated vectors from the basis $%
\{e_1,\dots,e_{4n}\}$ will be used. For example, for a (0,4)-tensor $P$, the
formula $k=P(e_b,e_a,e_a,e_b)$ means $k=\sum_{a,b=1}^{4n}P(e_b,e_a,e_a,e_b)$.

\item The triple $(i,j,k)$ denotes any cyclic permutation of $(1,2,3)$.

\item The sum $\sum_{(ijk)}$ means the cyclic sum. For example,
\[
\sum_{(ijk)} df(I_i X)\omega_j (Y,Z)=df(I_1 X)\omega_2 (Y,Z)+df(I_2 X)\omega_3 (Y,Z)+df(I_3 X)\omega_1 (Y,Z).
\]

\item[e)] $s$ will be any number from the set $\{1,2,3\}, \quad
s\in\{1,2,3\} $.
\end{enumerate}
\end{conv}

\textbf{Acknowledgments} The research is partially supported by the Contract
``Idei", DID 02-39/21.12.2009. S.I and A.P. are partially supported by the
Contract 130/2012 with the University of Sofia `St.Kl.Ohridski'. D.V. would like to thank Professor Luca Capogna for some useful comments.

\section{Quaternionic contact manifolds}

In this section we will briefly review the basic notions of quaternionic
contact geometry and recall some results from \cite{Biq1}, \cite{IMV} and
\cite{IV} which we will use in this paper.

It is well known that the sphere at infinity of a  non-compact symmetric space $M$ of rank one carries a natural
Carnot-Carath\'eodory structure, see \cite{M,P}.  In the real hyperbolic case one obtains the conformal class of the round metric on the sphere. In the remaining cases, each of the complex, quaternion and octonionic   hyperbolic metrics on the unit ball induces a Carnot-Carath\'eodory  structure on the unit sphere.  This defines a conformal structure  on a sub-bundle of  the tangent bundle of co-dimension $\dim \mathbb{K} -1$, where $\mathbb{K}=\mathbb{C},\,  \mathbb{H},\,  \mathbb{O}$. In the complex case the obtained geometry is the well studied standard CR structure on the unit sphere in complex space.  Quaternionic contact (qc) structure were introduced by O. Biquard, see \cite{Biq1}, and are modeled on  the conformal boundary at infinity of the quaternionic hyperbolic space. Biquard showed that the infinite dimensional family \cite{LeB91} of complete quaternionic-K\"ahler deformations of the quaternion hyperbolic metric have conformal infinities which provide an infinite dimensional family of examples of qc structures. Conversely, according to \cite{Biq1} every real analytic qc structure on a manifold $M$ of dimension at least eleven is the conformal infinity of a unique quaternionic-K\"ahler metric defined in a neighborhood of $M$. Furthermore,  \cite{Biq1} considered CR and qc structures as boundaries of infinity of Einstein metrics rather than only as boundaries at infinity of  K\"ahler-Einstein and quaternionic-K\"ahler metrics, respectively.  In fact, in \cite{Biq1} it was shown that in each of the three cases (complex, quaternionic, octoninoic)  any small perturbation of the standard Carnot-Carath\'eodory structure on the boundary is the conformal infinity of an essentially unique Einstein metric on the unit ball, which is asymptotically symmetric.  In the Riemannian case the corresponding question was posed  in \cite{FGr85} and  the perturbation result was proven in \cite{GrL91}.

{Another natural extension of an interesting Riemannian problem is the quaternionic contact  Yamabe problem, a particular case of which \cite{GV,Wei,IMV,IMV1}  amounts to finding  the best constant in the $L^2$
Folland-Stein Sobolev-type embedding and the functions for which the equality is achieved, \cite{F2} and \cite{FS} with a complete solution  on
the  quaternionic Heisenberg groups  given in \cite{IMV1,IMV2}.}

\subsection{Quaternionic contact structures and the Biquard connection}
A quaternionic contact (qc) manifold $(M, \eta,g, \mathbb{Q})$ is a $4n+3$%
-dimensional manifold $M$ with a codimension three distribution $H$ locally
given as the kernel of a 1-form $\eta=(\eta_1,\eta_2,\eta_3)$ with values in
$\mathbb{R}^3$. In addition $H$ has an $Sp(n)Sp(1)$ structure, that is, it
is equipped with a Riemannian metric $g$ and a rank-three bundle $\mathbb{Q}$
consisting of endomorphisms of $H$ locally generated by three almost complex
structures $I_1,I_2,I_3$ on $H$ satisfying the identities of the imaginary
unit quaternions, $I_1I_2=-I_2I_1=I_3, \quad I_1I_2I_3=-id_{|_H}$ which are
hermitian compatible with the metric $g(I_s.,I_s.)=g(.,.)$ and the following
compatibility condition holds $\qquad 2g(I_sX,Y)\ =\ d\eta_s(X,Y), \quad
X,Y\in H.$

The transformations preserving a given quaternionic contact
structure $\eta$, i.e., $\bar\eta=\mu\Psi\eta$ for a positive smooth
function $\mu$ and an $SO(3)$ matrix $\Psi$ with smooth functions as
entries are called \emph{quaternionic contact conformal (qc-conformal) transformations}, see Deinition \ref{d:3-ctct auto}.  If the function $\mu$ is constant $\bar\eta$ is called qc-homothetic to $\eta$. The qc conformal curvature tensor $W^{qc}$, introduced in \cite{IV}, is the
obstruction for a qc structure to be locally qc conformal to the
standard 3-Sasakian structure on the $(4n+3)$-dimensional sphere \cite{IMV,IV}.

A special phenomena, noted in \cite{Biq1}, is that the contact form $\eta$
determines the quaternionic structure and the metric on the horizontal
distribution in a unique way.

On a qc manifold with a fixed metric $g$ on $H$ there exists a canonical
connection defined first by O. Biquard in \cite{Biq1} when the dimension $(4n+3)>7$, and in \cite%
{D} for the 7-dimensional case.
Biquard showed that
there  is a unique connection $\nabla$ with torsion $T$  and a
unique supplementary subspace $V$ to $H$ in $TM$, such that:
\begin{enumerate}[(i)]
\item $\nabla$ preserves the decomposition $H\oplus V$ and the $
Sp(n)Sp(1)$ structure on $H$, i.e. $\nabla g=0, \nabla\sigma \in\Gamma(
\mathbb{Q})$ for a section $\sigma\in\Gamma(\mathbb{Q})$, and its torsion on
$H$ is given by $T(X,Y)=-[X,Y]_{|V}$;
\item for $\xi\in V$, the endomorphism $T(\xi,.)_{|H}$ of $H$ lies in $
(sp(n)\oplus sp(1))^{\bot}\subset gl(4n)$;
\item the connection on $V$ is induced by the natural identification $
\varphi$ of $V$ with the subspace $sp(1)$ of the endomorphisms of $H$, i.e. $
\nabla\varphi=0$.
\end{enumerate}
This canonical connection is also known as \emph{the Biquard connection}.When the dimension of $M$ is
at least eleven \cite{Biq1} also described the supplementary distribution $V$%
, which is (locally) generated by the so called Reeb vector fields $%
\{\xi_1,\xi_2,\xi_3\}$ determined by
\begin{equation}  \label{bi1}
\begin{aligned} \eta_s(\xi_k)=\delta_{sk}, \qquad (\xi_s\lrcorner
d\eta_s)_{|H}=0,\\ (\xi_s\lrcorner d\eta_k)_{|H}=-(\xi_k\lrcorner
d\eta_s)_{|H}, \end{aligned}
\end{equation}
where $\lrcorner$ denotes the interior multiplication. If the dimension of $%
M $ is seven Duchemin shows in \cite{D} that if we assume, in addition, the
existence of Reeb vector fields as in \eqref{bi1}, then   the Biquard result  holds. Henceforth, by a qc structure in dimension $7$ we shall mean a qc structure satisfying \eqref{bi1}.

Notice that equations \eqref{bi1} are invariant under the natural $SO(3)$
action. Using the triple of Reeb vector fields we extend the metric $g$ on $%
H $ to a metric $h$ on $M$ by requiring $span\{\xi_1,\xi_2,\xi_3\}=V\perp H
\text{ and } h(\xi_s,\xi_k)=\delta_{sk}. $  The
Riemannian metric $h$ as well as the Biquard connection do not depend on the action of $SO(3)$ on $V$, but both
change  if $\eta$ is multiplied by a conformal factor \cite{IMV}.
Clearly, the Biquard connection preserves the Riemannian metric on $TM, \nabla
h=0$. Since the Biquard connection is metric it is connected with the
Levi-Civita connection $\nabla^h$ of the metric $h$ by the general formula
\begin{equation}  \label{lcbi}
h(\nabla_AB,C)=h(\nabla^h_AB,C)+\frac12\Big[ %
h(T(A,B),C)-h(T(B,C),A)+h(T(C,A),B)\Big], \quad A,B,C\in\Gamma(TM).
\end{equation}

The covariant derivative of the qc structure with respect to the Biquard
connection and the covariant derivative of the distribution $V $ are given
by
\begin{equation}  \label{xider}
\nabla I_i=-\alpha_j\otimes I_k+\alpha_k\otimes I_j,\quad
\nabla\xi_i=-\alpha_j\otimes\xi_k+\alpha_k\otimes\xi_j.
\end{equation}
The vanishing of the $sp(1)$-connection $1$-forms on $H$ implies the
vanishing of the torsion endomorphism of the Biquard connection (see \cite%
{IMV}).

The fundamental 2-forms $\omega_s$ of the quaternionic structure $\mathbb{Q}$ are
defined by
\begin{equation}  \label{thirteen}
2\omega_{s|H}\ =\ \, d\eta_{s|H},\qquad \xi\lrcorner\omega_s=0,\quad \xi\in
V.
\end{equation}
Due to \eqref{thirteen}, the torsion restricted to $H$ has the form
\begin{equation}  \label{torha}
T(X,Y)=-[X,Y]_{|V}=2\omega_1(X,Y)\xi_1+2\omega_2(X,Y)\xi_2+2\omega_3(X,Y)%
\xi_3.
\end{equation}

\subsection{Invariant decompositions}

An endomorphism $\Psi$ of $H$ can be decomposed with respect to the
quaternionic structure $(\mathbb{Q},g)$ uniquely into four $Sp(n)$-invariant
parts 
$\Psi=\Psi^{+++}+\Psi^{+--}+\Psi^{-+-}+\Psi^{--+},$ 
where $\Psi^{+++}$ commutes with all three $I_i$, $\Psi^{+--}$ commutes with
$I_1$ and anti-commutes with the others two and etc. The two $Sp(n)Sp(1)$%
-invariant components $\Psi_{[3]}=\Psi^{+++}, \quad
\Psi_{[-1]}=\Psi^{+--}+\Psi^{-+-}+\Psi^{--+} $ are determined by
\begin{equation*}
\begin{aligned} \Psi=\Psi_{[3]} \quad \Longleftrightarrow 3\Psi+I_1\Psi
I_1+I_2\Psi I_2+I_3\Psi I_3=0,\\ \Psi=\Psi_{[-1]}\quad \Longleftrightarrow
\Psi-I_1\Psi I_1-I_2\Psi I_2-I_3\Psi I_3=0. \end{aligned}
\end{equation*}
With a short calculation one sees that the $Sp(n)Sp(1)$-invariant components
are the projections on the eigenspaces of the Casimir operator $\Upsilon =\
I_1\otimes I_1\ +\ I_2\otimes I_2\ +\ I_3\otimes I_3$ corresponding,
respectively, to the eigenvalues $3$ and $-1$, see \cite{CSal}. If $n=1$
then the space of symmetric endomorphisms commuting with all $I_s$ is
1-dimensional, i.e. the [3]-component of any symmetric endomorphism $\Psi$
on $H$ is proportional to the identity, $\Psi_{3}=-\frac{tr\Psi}{4}Id_{|H}$%
. Note here that each of the three 2-forms $\omega_s$ belongs to its
[-1]-component, $\omega_s=\omega_{s[-1]}$ and constitute a basis of the Lie
algebra $sp(1)$.

\subsection{The torsion tensor}

The properties of the Biquard connection are encoded in the properties of
the torsion endomorphism $T_{\xi }=T(\xi ,\cdot ):H\rightarrow H,\quad \xi
\in V$. Decomposing the endomorphism $T_{\xi }\in (sp(n)+sp(1))^{\perp }$
into its symmetric part $T_{\xi }^{0}$ and skew-symmetric part $b_{\xi
},T_{\xi }=T_{\xi }^{0}+b_{\xi }$, O. Biquard shows in \cite{Biq1} that the
torsion $T_{\xi }$ is completely trace-free, $tr\,T_{\xi }=tr\,T_{\xi }\circ
I_{s}=0$, its symmetric part has the properties $T_{\xi
_{i}}^{0}I_{i}=-I_{i}T_{\xi _{i}}^{0}\quad I_{2}(T_{\xi
_{2}}^{0})^{+--}=I_{1}(T_{\xi _{1}}^{0})^{-+-},\quad I_{3}(T_{\xi
_{3}}^{0})^{-+-}=I_{2}(T_{\xi _{2}}^{0})^{--+},\quad I_{1}(T_{\xi
_{1}}^{0})^{--+}=I_{3}(T_{\xi _{3}}^{0})^{+--}$, where the upperscript $+++$
means commuting with all three $I_{i}$, $+--$ indicates commuting with $%
I_{1} $ and anti-commuting with the other two and etc. The skew-symmetric
part can be represented as $b_{\xi _{i}}=I_{i}u$, where $u$ is a traceless
symmetric (1,1)-tensor on $H$ which commutes with $I_{1},I_{2},I_{3}$.
Therefore we have$T_{\xi _{i}}=T_{\xi _{i}}^{0}+I_{i}u$. If $n=1$ then the
tensor $u$ vanishes identically, $u=0$, and the torsion is a symmetric
tensor, $T_{\xi }=T_{\xi }^{0}$.

Any 3-Sasakian manifold has zero torsion endomorphism, $T_{\xi}=0$, and the converse is
true if in addition the qc scalar curvature (see \eqref{qscs}) is a positive
constant \cite{IMV}  {(the case of negative qc-scalar curvature  can be treated very similarly, see \cite{IV1,IV2}}. We remind that a $(4n+3)$-dimensional Riemannian
manifold $(M,g)$ is called 3-Sasakian if the cone metric $g_c=t^2h+dt^2$ on $%
C=M\times \mathbb{R}^+$ is a hyper K\"ahler metric, namely, it has holonomy contained in $Sp(n+1)$ \cite{BGN}. A 3-Sasakian manifold of dimension $%
(4n+3) $ is Einstein with positive Riemannian scalar curvature $(4n+2)(4n+3)$
\cite{Kas} and if complete it is a compact manifold with a finite fundamental group
(see \cite{BG} for a nice overview of 3-Sasakian spaces).

\subsection{Torsion and curvature}

Let $R=[\nabla,\nabla]-\nabla_{[\ ,\ ]}$ be the curvature tensor of $\nabla$
and the dimension is $4n+3$. We denote the curvature tensor of type (0,4)
and the torsion tensor of type (0,3) by the same letter, $%
R(A,B,C,D):=h(R(A,B)C,D),\quad T(A,B,C):=h(T(A,B),C)$, $A,B,C,D \in
\Gamma(TM)$. The Ricci tensor, the normalized scalar curvature, the Ricci $2$%
-forms and the Ricci type-tensor $\zeta$ of the Biquard connection, called
\emph{qc-Ricci tensor} $Ric$, \emph{normalized qc-scalar curvature} $S$,
\emph{qc-Ricci forms} $\rho_s$, respectively, are given by
\begin{equation}  \label{qscs}
\begin{aligned} Ric(A,B)=R(e_b,A,B,e_b),\quad 8n(n+2)S=R(e_b,e_a,e_a,e_b),\\
\rho_s(A,B)=\frac1{4n}R(A,B,e_a,I_se_a),\quad
\zeta_s(A,B)=\frac1{4n}R(e_a,A,B,I_se_a). \end{aligned}
\end{equation}
The $sp(1)$-part of $R$ is determined by the Ricci 2-forms and the
connection 1-forms by
\begin{equation}  \label{sp1curv}
R(A,B,\xi_i,\xi_j)=2\rho_k(A,B)=(d\alpha_k+\alpha_i\wedge\alpha_j)(A,B),
\qquad A,B \in \Gamma(TM).
\end{equation}
The two $Sp(n)Sp(1)$-invariant trace-free symmetric 2-tensors $T^0(X,Y)=
g((T_{\xi_1}^{0}I_1+T_{\xi_2}^{0}I_2+T_{ \xi_3}^{0}I_3)X,Y)$, $U(X,Y)
=g(uX,Y)$ on $H$, introduced in \cite{IMV}, have the properties:
\begin{equation}  \label{propt}
\begin{aligned} T^0(X,Y)+T^0(I_1X,I_1Y)+T^0(I_2X,I_2Y)+T^0(I_3X,I_3Y)=0, \\
U(X,Y)=U(I_1X,I_1Y)=U(I_2X,I_2Y)=U(I_3X,I_3Y). \end{aligned}
\end{equation}
In dimension seven $(n=1)$, the tensor $U$ vanishes identically, $U=0$.

We shall need the following identity taken from \cite[Proposition~2.3]{IV} $%
4T^0(\xi_s,I_sX,Y)=T^0(X,Y)-T^0(I_sX,I_sY)$ which implies the formula
\begin{equation}  \label{need1}
T(\xi_s,I_sX,Y)=T^0(\xi_s,I_sX,Y)+g(I_suI_sX,Y)=\frac14\Big[%
T^0(X,Y)-T^0(I_sX,I_sY)\Big]-U(X,Y).
\end{equation}
We recall that a qc structure is said to be \emph{qc-Einstein} if the horizontal
qc-Ricci tensor is a scalar multiple of the metric, $Ric(X,Y)=2(n+2)Sg(X,Y)$.
The horizontal Ricci-type tensor can be expressed in terms of the torsion of
the Biquard connection \cite{IMV} (see also \cite{IMV1,IV}). We collect
below the necessary facts from \cite[Theorem~1.3, Theorem~3.12,
Corollary~3.14, Proposition~4.3 and Proposition~4.4]{IMV} with slight
modification presented in \cite{IV}
\begin{equation}  \label{sixtyfour}
\begin{aligned} & Ric(X,Y) =(2n+2)T^0(X,Y)+(4n+10)U(X,Y)+2(n+2)Sg(X,Y),\\ &
\rho_s(X,I_sY) =
-\frac12\Bigl[T^0(X,Y)+T^0(I_sX,I_sY)\Bigr]-2U(X,Y)-Sg(X,Y),\\ &
\zeta_s(X,I_sY)=\frac{2n+1}{4n}T^0(X,Y)+\frac1{4n}T^0(I_sX,I_sY)+%
\frac{2n+1}{2n}U(X,Y)+\frac{S}2g(X,Y), \\ & T(\xi_{i},\xi_{j})
=-S\xi_{k}-[\xi_{i},\xi_{j}]_{H}, \qquad S = -h(T(\xi_1,\xi_2),\xi_3),\\ &
g(T(\xi_i,\xi_j),X)
=-\rho_k(I_iX,\xi_i)=-\rho_k(I_jX,\xi_j)=-h([\xi_i,\xi_j],X). \end{aligned}
\end{equation}
For $n=1$ the above formulas hold with $U=0$.  Hence, the qc-Einstein condition is equivalent to the vanishing of the
torsion endomorphism of the Biquard connection. In this case the normalized qc scalar curvature $S$ is
constant and the vertical distribution $V$ is integrable provided $n>1$. If $S>0$
then the qc manifold is locally 3-Sasakian \cite{IMV}, (see \cite{IV1} for
the negative qc scalar curvature).

We shall also need the general formula for the curvature \cite{IV,IV2}
\begin{multline}
R(\xi _{i},X,Y,Z)=-(\nabla _{X}U)(I_{i}Y,Z)+\omega _{j}(X,Y)\rho
_{k}(I_{i}Z,\xi _{i})-\omega _{k}(X,Y)\rho _{j}(I_{i}Z,\xi _{i})
\label{d3n5} \\
-\frac{1}{4}\Big[(\nabla _{Y}T^{0})(I_{i}Z,X)+(\nabla _{Y}T^{0})(Z,I_{i}X)%
\Big]+\frac{1}{4}\Big[(\nabla _{Z}T^{0})(I_{i}Y,X)+(\nabla
_{Z}T^{0})(Y,I_{i}X)\Big] \\
-\omega _{j}(X,Z)\rho _{k}(I_{i}Y,\xi _{i})+\omega _{k}(X,Z)\rho
_{j}(I_{i}Y,\xi _{i})-\omega _{j}(Y,Z)\rho _{k}(I_{i}X,\xi _{i})+\omega
_{k}(Y,Z)\rho _{j}(I_{i}X,\xi _{i}),
\end{multline}%
where the Ricci two forms are given by, cf. \cite[Theorem 3.1]{IV} or \cite[%
Theorem4.3.11]{IV2}
\begin{equation}
\begin{aligned}
6(2n+1)\rho_s(\xi_s,X)=(2n+1)X(S)+\frac12(%
\nabla_{e_a}T^0)[(e_a,X)-3(I_se_a,I_sX)]-2(\nabla_{e_a}U)(e_a,X),\\
6(2n+1)\rho_i(\xi_j,I_kX)=(2n-1)(2n+1)X(S)-\frac12(%
\nabla_{e_a}T^0)[(4n+1)(e_a,X)+3(I_ie_a,I_iX)]\\-4(n+1)(%
\nabla_{e_a}U)(e_a,X) .\end{aligned}  \label{d3n6}
\end{equation}

\subsection{The Ricci identities, the divergence theorem.}

We shall use repeatedly the following Ricci identities of order two and
three, see also \cite{IV} and\cite{IPV1}. 
Let $\xi_i$, $i=1,2,3$ be the Reeb vector fields,  $f$ a smooth
function on the qc manifold $M$ and $\nabla f$ its horizontal
gradient, $g(\nabla f,X)=df(X)$. We have:
\begin{equation}  \label{boh2}
\begin{aligned} & \nabla^2f
(X,Y)-\nabla^2f(Y,X)=-2\sum_{s=1}^3\omega_s(X,Y)df(\xi_s),\\
& \nabla^2f
(X,\xi_s)-\nabla^2f(\xi_s,X)=T(\xi_s,X,\nabla f),\\ & \nabla^3 f
(X,Y,Z)-\nabla^3 f(Y,X,Z)=-R(X,Y,Z,\nabla f) - 2\sum_{s=1}^3
\omega_s(X,Y)\nabla^2f (\xi_s,Z),\\ &
\nabla^3f(X,Y,\xi_i)-\nabla^3f(Y,X,\xi_i)=-2df(\xi_j)\rho_k(X,Y)+2df(\xi_k)
\rho_j(X,Y)-2\sum_{s=1}^3\omega_s(X,Y)\nabla^2(\xi_s,\xi_i),\\&\nabla^3f(%
\xi_s,X,Y)-\nabla^3f(X,\xi_s,Y)=-R(\xi_s,X,Y,\nabla
f)-\nabla^2f(T(\xi_s,X),Y),\\& \nabla ^{3}f(\xi _{s},X,Y) -\nabla
^{3}f(X,Y,\xi _{s})=-\nabla ^{2}f\left( T\left( \xi _{s},X\right) ,Y\right)
-\nabla ^{2}f\left( X,T\left( \xi _{s},Y\right) \right) -df\left( \left(
\nabla _{X}T\right) \left( \xi _{s},Y\right) \right)\\ &\hskip4.3in -R(\xi
_{s},X,Y,\nabla f). \end{aligned}
\end{equation}

The horizontal sub-Laplacian $\triangle f$ and the norm of the horizontal
gradient $\nabla f$ of a smooth function $f$ on $M$ are defined respectively
by
\begin{equation*}
\triangle f\ =-\ tr^g_H(\nabla^2f)\ =\nabla^*df= -\ \nabla^2f(e_a,e_a),
\qquad |\nabla f|^2\ =\ df(e_a)\,df(e_a).
\end{equation*}
The function $f$ is an eigenfunction with eigenvalue $\lambda$ of the
sub-Laplacian if, for some constant $\lambda$ we have
\begin{equation}  \label{eig}
\triangle f =\lambda f.
\end{equation}
From the Ricci identities we have the following formulas for the traces through the almost complex structures of the Hessian
\begin{equation}  \label{xi1}
g(\nabla^2f , \omega_s) =\nabla^2f(e_a,I_se_a)=-4ndf(\xi_s).
\end{equation}
For a fixed local 1-form $\eta$ and a fix $s\in \{1,2,3\}$ the form $%
Vol_{\eta}=\eta_1\wedge\eta_2\wedge\eta_3\wedge\omega_s^{2n}$ is a locally
defined volume form. Note that $Vol_{\eta}$ is independent on $s $ as well
as it is independent on the local one forms $\eta_1,\eta_2,\eta_3 $. Hence
it is globally defined volume form denoted with $\, Vol_{\eta}$. The
(horizontal) divergence of a horizontal vector field/one-form $%
\sigma\in\Lambda^1\, (H)$ defined by $\nabla^*\, \sigma\
=-tr|_{H}\nabla\sigma=\ -\nabla \sigma(e_a,e_a)$ supplies the "integration
by parts" formula \cite{IMV}, see also \cite{Wei},
\begin{equation}  \label{div}
\int_M (\nabla^*\sigma)\,\, Vol_{\eta}\ =\ 0.
\end{equation}

\section{Proof of the main Theorems}
The proof of Theorem~\protect\ref{main2} is lengthy and requires a number of steps which we present in the following sub-sections. Throughout this section we shall assume the assumptions of Theorem \ref{main2}. In particular, $f$ is a non-constant smooth function whose horizontal Hessian satisfies \eqref{eq7}. Our first step is to show the vanishing of the torsion tensor, $T^0=0$ and $U=0$. We start by expressing the remaining parts of the Hessian (w.r.t. the Biquard connection) in terms of the torsion tensors and show that $f$ satisfies an elliptic equation on $M$.  A simple argument shows that $T^0(I_s\gr,\gr)=U(I_s\gr,\gr)=0$, $s=1,2,3$. Furthermore, using the $[-1]$-component of the curvature tensor we show that $T^0(I_s\gr,I_t\gr)=0$, $s,\, t\in \{1,2,3\}$, $s\not=t$. In addition, we determine the torsion tensors $T^0$ and $U$ in terms of the horizontal gradient of $f$ and the tensor $U(\gr,\gr)$. The analysis proceeds by finding  formulas of the same type for the covariant derivatives of  $T^0$ and $U$.  Thus, the crux of the matter in showing that the torsion vanishes is the proof that $U(\gr,\gr)=0$. This fact will be achieved with the help of the Ricci identities, the contracted Bianchi second identity and thus far established results.   In the next step of the proof of Theorem \ref{main2} we compute the Riemannian Hessian of $f$, with respect to the Levi-Civita connection of the metric \eqref{riem1} which allow us  to invoke Obata's result thus proving that $M$ equipped with the Riemannian metric  \eqref{riem1} is homothetic  to the unit sphere in quaternion space.  The final step is to show that $M$ is qc-homothetic to the $(4n+3)$-dimensional 3-Sasakian unit sphere. Here, we employ a standard monodromy argument showing that a compact simply connected {locally qc-conformally flat manifold is globally qc-conformal to the 3-Sasakian unit sphere.} The key is Theorem~\ref{l:Liouville}, which is a generalization of the Liouville theorem, showing that every qc-conformal transformation between open subsets of the 3-Sasakian unit sphere is the restriction of a global qc-conformal transformation, i.e., an element of the group $PSp(n+1,1)$, see subsection \ref{ss:qc conf flat} for further details.

\subsection{Some basic identities}
We start our analysis by  finding a formula for the the third covariant derivative of a function which satisfies \eqref{eq7}.
\begin{lemma}\label{l:nab3xi}
With the assumptions of Theorem \ref{main2}  we have the following formula for the third covariant derivative of the function $f$,
\begin{equation}\label{nab3xi}
\nabla ^{3}f(A,X,Y)=-df(A)g(X,Y)-\sum_{s=1}^{3}\omega
_{s}(X,Y)\nabla ^{2}f(A,\xi _{s}), \quad A\in\Gamma(TM).
\end{equation}
\end{lemma}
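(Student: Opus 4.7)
The plan is to differentiate the Hessian identity \eqref{eq7} covariantly along a general vector field $A$. Writing
\[
\nabla^3 f(A,X,Y)=A\bigl(\nabla^2 f(X,Y)\bigr)-\nabla^2 f(\nabla_A X,Y)-\nabla^2 f(X,\nabla_A Y)
\]
and using that the Biquard connection preserves the horizontal distribution $H$, I may apply \eqref{eq7} to each of the three Hessians on the right, since $\nabla_A X$ and $\nabla_A Y$ remain horizontal. The problem then reduces to bookkeeping: identifying which pieces cancel in the Leibniz expansion of $A\bigl(-fg(X,Y)-\sum_s df(\xi_s)\omega_s(X,Y)\bigr)$ against the two correction Hessians.

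The metric property $\nabla g=0$ gives $A(g(X,Y))=g(\nabla_A X,Y)+g(X,\nabla_A Y)$, and this exactly neutralises the $f$-contribution produced by $-\nabla^2 f(\nabla_A X,Y)-\nabla^2 f(X,\nabla_A Y)$. Likewise, the portion of $A(\omega_s(X,Y))$ given by $\omega_s(\nabla_A X,Y)+\omega_s(X,\nabla_A Y)$ is eliminated by those same correction terms. Using the product rule $A(df(\xi_s))=\nabla^2 f(A,\xi_s)+df(\nabla_A\xi_s)$, the surviving expression reads
\[
\nabla^3 f(A,X,Y)=-df(A)g(X,Y)-\sum_{s=1}^3 \nabla^2 f(A,\xi_s)\omega_s(X,Y)-\sum_{s=1}^3\Bigl[df(\nabla_A\xi_s)\omega_s(X,Y)+df(\xi_s)(\nabla_A\omega_s)(X,Y)\Bigr].
\]

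It remains to verify that the last sum vanishes, which is the only step requiring a small explicit calculation. From \eqref{xider} one has $\nabla_A\xi_i=-\alpha_j(A)\xi_k+\alpha_k(A)\xi_j$ for $(i,j,k)$ a cyclic permutation of $(1,2,3)$, and since $\omega_i(X,Y)=g(I_iX,Y)$ together with $\nabla g=0$ gives $(\nabla_A\omega_i)(X,Y)=-\alpha_j(A)\omega_k(X,Y)+\alpha_k(A)\omega_j(X,Y)$. Collecting the coefficient of each $\alpha_\ell(A)$, $\ell=1,2,3$, in the residual sum produces a symmetric pair of terms of the shape $df(\xi_j)\omega_k+df(\xi_k)\omega_j$ that appears once with each sign, so the total is zero. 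This establishes \eqref{nab3xi}. No genuine obstacle arises; the lemma is a direct Leibniz-rule computation whose only subtlety is the cancellation of the connection-form contributions, which is guaranteed by the cyclic structure built into \eqref{xider}.
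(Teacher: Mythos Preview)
Your proof is correct and follows essentially the same approach as the paper: both differentiate \eqref{eq7} covariantly along $A$, arrive at the residual sum $\sum_s\bigl[df(\nabla_A\xi_s)\omega_s(X,Y)+df(\xi_s)(\nabla_A\omega_s)(X,Y)\bigr]$, and then use the cyclic structure of \eqref{xider} to see that the connection-form contributions cancel. The paper writes the cyclic cancellation out explicitly while you describe it in words, but the argument is the same.
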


\begin{proof}
The claimed formula is obtained  by differentiating  the Hessian equation \eqref{eq7}.  Indeed, the covariant derivative along $A\in\Gamma(TM)$ of \eqref{eq7} gives
\begin{multline*}
\nabla ^{3}f(A,X,Y)=-df(A)g(X,Y)\\
-\sum_{s=1}^{3}\left[ \nabla ^{2}f(A,\xi _{s})\omega
_{s}(X,Y)+df(\nabla _{A}\xi _{s})\omega _{s}(X,Y)+df(\xi _{s})\left(
\nabla _{A}\omega _{s}\right) (X,Y)\right] ,
\end{multline*}
which together with  \eqref{xider} gives the identity, cf. also Convention 1.4 e),
\begin{multline*}
\nabla ^{3}f(A,X,Y)=-df(A)g(X,Y)-\sum_{(ijk)}\left[ \nabla
^{2}f(A,\xi _{i})\omega _{i}(X,Y)+df(\nabla _{A}\xi
_{i})\omega _{i}(X,Y)+df(\xi _{i})\left( \nabla _{A}\omega
_{i}\right) (X,Y)\right] \\
=-df(A)g(X,Y)-\sum_{t=1}^{3}\left[ \nabla ^{2}f(A,\xi
_{t})\omega _{t}(X,Y)\right]   \\
-\sum_{(ijk)}\left[ -\alpha _{j}(A)df(\xi _{k})+\alpha _{k}(A)df(\xi _{j})\right] \omega _{i}(X,Y)-\sum_{(ijk)}\left[ -\alpha
_{j}(A)\omega _{k}(X,Y)+\alpha _{k}(A)\omega _{j}(X,Y)\right]
df(\xi _{i})  \notag \\
=-df(A)g(X,Y)-\sum_{t=1}^{3}\left[ \nabla ^{2}f(A,\xi
_{t})\omega _{t}(X,Y)\right]  ,
\end{multline*}
which completes the proof.
\end{proof}
After this technical Lemma, our first goal is to find a formula  for  the curvature tensor $R(Z,X,Y,\nabla f)$, for $f$ satisfying \eqref{eq7}, using Lemma \ref{nab3xi} with $A=Z$, the Ricci
identities \eqref{boh2}, and the properties of the torsion.  In fact, after some standard calculations  it follows
\begin{multline}  \label{eqc1}
R(Z,X,Y,\nabla f)=\Big[df(Z)g(X,Y)-df(X)g(Z,Y)\Big] \\
+\sum_{s=1}^3\Big[\nabla df(\xi_s,Z)\omega_s(X,Y)-\nabla
df(\xi_s,X)\omega_s(Z,Y)-2\nabla df(\xi_s,Y)\omega_s(Z,X)\Big] \\
+\sum_{s=1}^3\Big[T(\xi_s,Z,\nabla f)\omega_s(X,Y)-T(\xi_s,X,\nabla
f)\omega_s(Z,Y)\Big].
\end{multline}
By taking traces in \eqref{eqc1}  we can derive formulas for the various contracted tensors \eqref{qscs}. We shall use the following,
\begin{equation}  \label{eqc2}
\begin{aligned} & Ric(Z,\nabla
f)=(4n-1)df(Z)-\sum_{s=1}^3T(\xi_s,I_sZ,\nabla f)-3\sum_{s=1}^3\nabla
df(\xi_s,I_sZ),\\ &4n\zeta_i(I_iZ,\nabla f)=-df(Z)+(4n-1)T(\xi_i,I_iZ,\nabla
f) +T(\xi_j,I_jZ,\nabla f)+T(\xi_k,I_kZ,\nabla f)\\ &\hskip2.5in
+(4n+1)\nabla df(\xi_i,I_iZ)-\nabla df(\xi_j,I_jZ)-\nabla df(\xi_k,I_kZ).
\end{aligned}
\end{equation}
The above formulas imply some other basic identities to which we turn next. Note that with the help of \eqref{sixtyfour} we can rewrite the Lichnerowicz type assumption \eqref{condm}  in the form
\begin{gather}\label{condm-app}
L(X,X)\overset{def}{=}2(n+2) Sg(X,X)+\alpha_n' T^0(X,X) +\beta_n'(X,X)\geq k_0g(X,X), \quad X\in H,\\\nonumber
\alpha_n'=\frac {2(2n+3)(n+2)}{2n+1}, \qquad \beta_n'=\frac {4(2n-1)(n+2)^2}{(2n+1)(n-1)},
\end{gather}
which allows to write the first claim of the following Lemma  in the form $L(Z,\gr)=0$ for all $Z\in H$ whenever $f$ satisfies \eqref{eq7} { taking $k_0=4(n+2)$.}
\begin{lemma}\label{l:Lichn identities}
With the assumptions of Theorem \ref{main2}, the next identity holds true
\begin{equation}  \label{vvvv4}
(S-2)df(Z)+\frac{2n+3}{2n+1}T^0(Z,\nabla f)+\frac{2(2n-1)(n+2)}{(2n+1)(n-1)}%
U(Z,\nabla f)=0.
\end{equation}
Furthermore, we have
\begin{equation}  \label{vvv1}
T^0(I_s\nabla f,\nabla f)=0, \qquad  U(I_s\nabla f,\nabla f)=0.
\end{equation}
\end{lemma}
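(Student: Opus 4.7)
The plan is to derive the scalar identity \eqref{vvvv4} by a two-stage contraction argument, and then to deduce the two vanishing identities in \eqref{vvv1} as short consequences of \eqref{vvvv4} together with intrinsic symmetries of $U$.

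For \eqref{vvvv4}, I will compute $Ric(Z,\nabla f)$ and the cyclic sum $\sum_{i=1}^{3}\zeta_{i}(I_{i}Z,\nabla f)$ in two different ways each. On one side, \eqref{eqc2} expresses both in terms of $df(Z)$, the torsion sum $\sum_{s}T(\xi_s,I_sZ,\nabla f)$, and the auxiliary ``vertical Hessian sum'' $B:=\sum_{s}\nabla df(\xi_s,I_sZ)$. On the other, the structural identities \eqref{sixtyfour} express them in terms of $T^0(Z,\nabla f)$, $U(Z,\nabla f)$, $S\,df(Z)$, and (for $\zeta$) the quantities $T^0(I_iZ,I_i\nabla f)$. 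To reconcile the two sides, I will first use \eqref{need1} together with the cyclic trace identity $\sum_{s}T^0(I_sX,I_sY)=-T^0(X,Y)$ in \eqref{propt} to reduce $\sum_{s}T(\xi_s,I_sZ,\nabla f)$ to $T^0(Z,\nabla f)-3U(Z,\nabla f)$, and use the same trace identity together with $U(I_sX,I_sY)=U(X,Y)$ to evaluate the cyclic sum of $\zeta_s$ in closed form. This yields two scalar linear relations among $T^0(Z,\nabla f)$, $U(Z,\nabla f)$, $df(Z)$, and $B$; eliminating $B$ between them produces \eqref{vvvv4}.

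The main obstacle is the bookkeeping needed to verify that the coefficient of $df(Z)$ in the resulting relation factors as $(2n+1)(n-1)(S-2)$, so that after division by $(2n+1)(n-1)$ the coefficients take exactly the normalized form stated in \eqref{vvvv4}. This step also pinpoints where the hypothesis $\dim M>7$ (equivalently $n>1$) enters: it guarantees $(n-1)\neq 0$, so the division is legitimate.

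For \eqref{vvv1}, the identity for $U$ does not use \eqref{vvvv4} at all: from the $I_s$-invariance $U(X,Y)=U(I_sX,I_sY)$ in \eqref{propt} applied to $X=I_s\nabla f$, $Y=\nabla f$, together with $I_s^{2}=-1$, I get $U(I_s\nabla f,\nabla f)=-U(\nabla f,I_s\nabla f)$, which by the symmetry of $U$ equals $-U(I_s\nabla f,\nabla f)$, forcing $U(I_s\nabla f,\nabla f)=0$. For $T^0$, I substitute $Z=I_s\nabla f$ into \eqref{vvvv4}: the first term vanishes because $df(I_s\nabla f)=g(\nabla f,I_s\nabla f)=0$ (as $I_s$ is $g$-skew-adjoint), the $U$-term vanishes by the previous step, and the nonzero coefficient $\tfrac{2n+3}{2n+1}$ then forces $T^0(I_s\nabla f,\nabla f)=0$, completing the proof.
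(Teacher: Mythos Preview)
Your proposal is correct and follows essentially the same approach as the paper: both derive two linear relations in $df(Z)$, $T^{0}(Z,\nabla f)$, $U(Z,\nabla f)$, and $B=\sum_{s}\nabla df(\xi_{s},I_{s}Z)$ by comparing \eqref{eqc2} with the structural formulas \eqref{sixtyfour} (using \eqref{need1} and \eqref{propt} to simplify the torsion sums), and then eliminate $B$ to obtain \eqref{vvvv4}. Your treatment of \eqref{vvv1}---first deducing $U(I_{s}\nabla f,\nabla f)=0$ from the $I_{s}$-invariance and symmetry of $U$, then substituting $Z=I_{s}\nabla f$ into \eqref{vvvv4}---matches the paper's, and in fact spells out explicitly the reason for $U(I_{s}\nabla f,\nabla f)=0$ that the paper takes for granted.
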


\begin{proof}
The first equations in \eqref{eqc2} and  \eqref{sixtyfour} together with %
\eqref{need1} imply
\begin{equation}  \label{vvvv}
3\sum_{s=1}^3\nabla df(\xi_s,I_sZ)=\Big[4n-1-(2n+4)S\Big]df(Z)-(2n+3)T^0(Z,%
\nabla f)-(4n+7)U(Z,\nabla f).
\end{equation}
The sum over $1,2,3$ of the second equality in \eqref{eqc2} together with
{the third } equality of \eqref{sixtyfour} and \eqref{need1} gives
\begin{equation}  \label{vvvv1}
(4n-1)\sum_{s=1}^3\nabla df(\xi_s,I_sZ)=(3-6nS)df(Z)-(2n+3)T^0)(Z,\nabla
f)-3U(Z,\nabla f).
\end{equation}
Subtracting \eqref{vvvv} from \eqref{vvvv1} we obtain
\begin{equation*}
4(n-1)\sum_{s=1}^3\nabla df(\xi_s,I_sZ)=4(1-n)(1+S)df(Z)+4(n+1)U(Z,\nabla f),
\end{equation*}
which for $n>1$ yields
\begin{equation}  \label{vvvv2}
\sum_{s=1}^3\nabla df(\xi_s,I_sZ)=-(1+S)df(Z)+\frac{n+1}{n-1}U(Z,\nabla f).
\end{equation}
The sum of \eqref{vvvv} and \eqref{vvvv1} gives
\begin{equation}  \label{vvvv3}
(2n+1)\sum_{s=1}^3\nabla df(\xi_s,I_sZ)=(2n+1)(1-2S)df(Z)-(2n+3)T^0(Z,\nabla
f)-(2n+5)U(Z,\nabla f).
\end{equation}
Equalities \eqref{vvvv2} and \eqref{vvvv3} imply \eqref{vvvv4}.
Letting  $Z=I_s\nabla f$ in the latter it follows $T^0(I_s\nabla f,\nabla f)=0$  since $U(I_s\nabla f,\nabla f)=0$.
\end{proof}

\subsection{Formulas for the  derivatives of $f$}
By assumption, the second order horizontal derivatives of $f$ satisfy the Hessian equation \eqref{eq7}. We derive next   formulas for the second order derivatives involving a horizontal and a vertical directions.
\begin{lemma}\label{l:D2mixed}
With the assumptions of Theorem \ref{main2} we have
\begin{equation}  \label{vvvv5}
\nabla df(\xi_i,I_iZ)=-df(Z)+\frac{2n+3}{4(2n+1)}\Big[T^0(Z,\nabla f)-T^0(I_iZ,I_i\nabla f)\Big]+%
\frac{2n^2+3n-1}{(2n+1)(n-1)}U(Z,\nabla f)
\end{equation}
and
\begin{equation}  \label{vvvv52}
\nabla df(Z,\xi_i)=df(I_iZ)-\frac{n+1}{2n+1}\Big[T^0(I_iZ,\nabla f)+T^0(Z,I_i\nabla f)\Big]-%
\frac{4n}{(2n+1)(n-1)}U(I_iZ,\nabla f).
\end{equation}
\end{lemma}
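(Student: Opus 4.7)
The plan is to derive the first identity by extracting $\nabla df(\xi_i,I_iZ)$ from the system of three relations of type \eqref{eqc2}, one for each $i\in\{1,2,3\}$, and then to obtain the second identity from the first via a Ricci-identity correction. To begin, I will regroup the second equation in \eqref{eqc2} in the form
\[
(4n+2)\,\nabla df(\xi_i,I_iZ)=4n\,\zeta_i(I_iZ,\nabla f)+df(Z)-\mathcal{T}_i+\sum_{s=1}^3\nabla df(\xi_s,I_sZ),
\]
where $\mathcal{T}_i=(4n-1)T(\xi_i,I_iZ,\nabla f)+T(\xi_j,I_jZ,\nabla f)+T(\xi_k,I_kZ,\nabla f)$; the point is that the coefficient $(4n+1)$ of the distinguished Hessian combines with the $-1$ coefficients of the other two to produce the isolating prefactor $(4n+2)$.

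I would then substitute three ingredients on the right. First, evaluate $\zeta_i(I_iZ,\nabla f)$ from the third line of \eqref{sixtyfour} by taking $X=I_iZ$ and $Y=-I_i\nabla f$, simplifying via $I_i^2=-\mathrm{Id}$ and the invariance $U(I_iA,I_iB)=U(A,B)$ from \eqref{propt}. Second, apply \eqref{need1} to each of the three terms $T(\xi_s,I_sZ,\nabla f)$ and use the closure relation $T^0(Z,\nabla f)+\sum_s T^0(I_sZ,I_s\nabla f)=0$, also from \eqref{propt}, to collapse the sum. Third, replace $\sum_s\nabla df(\xi_s,I_sZ)$ by its value from \eqref{vvvv2}. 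The main obstacle throughout is the careful bookkeeping of signs and rational coefficients.

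After collecting terms, the outcome expresses $(4n+2)\nabla df(\xi_i,I_iZ)$ as a linear combination of $S\,df(Z)$, the \emph{sum} $T^0(Z,\nabla f)+T^0(I_iZ,I_i\nabla f)$, and $U(Z,\nabla f)$. At first glance this disagrees with the target formula, but the discrepancy is precisely a nonzero multiple of the Lichnerowicz-type identity \eqref{vvvv4} established in Lemma \ref{l:Lichn identities}. A single application of \eqref{vvvv4} eliminates the $S\,df(Z)$ term and converts $T^0(Z,\nabla f)+T^0(I_iZ,I_i\nabla f)$ into the announced \emph{difference} $T^0(Z,\nabla f)-T^0(I_iZ,I_i\nabla f)$, producing the stated coefficient $\tfrac{2n+3}{4(2n+1)}$ and the stated $U$-coefficient $\tfrac{2n^2+3n-1}{(2n+1)(n-1)}$, completing the proof of the first identity.

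For the second identity, substitute $Z\mapsto -I_iZ$ in the just-proved first formula, using $I_i^2=-\mathrm{Id}$ and the invariance of $U$, to obtain an expression for $\nabla df(\xi_i,Z)$. The second Ricci identity in \eqref{boh2} then gives $\nabla df(Z,\xi_i)=\nabla df(\xi_i,Z)+T(\xi_i,Z,\nabla f)$; inserting $T(\xi_i,Z,\nabla f)$ from \eqref{need1} (applied with $X=-I_iZ$, $Y=\nabla f$) recombines the coefficients into the stated closed form, with the $T^0$ coefficient consolidating to $-\tfrac{n+1}{2n+1}$ and the $U$ coefficient to $-\tfrac{4n}{(2n+1)(n-1)}$.
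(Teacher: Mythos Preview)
Your proposal is correct and follows essentially the same route as the paper's proof: isolate $(4n+2)\nabla df(\xi_i,I_iZ)$ from the second equation in \eqref{eqc2}, substitute the $\zeta$-formula from \eqref{sixtyfour}, the torsion expansion \eqref{need1}, and the trace \eqref{vvvv2}, then apply \eqref{vvvv4} to convert the $S$-term and the $T^0$-sum into the claimed difference form; the second identity is then obtained from the first via the Ricci identity and \eqref{need1} after the substitution $Z\mapsto -I_iZ$. The paper even records the same intermediate ``sum'' expression $-\tfrac{S}{2}df(Z)-\tfrac{2n+3}{4(2n+1)}[T^0(Z,\nabla f)+T^0(I_iZ,I_i\nabla f)]+\tfrac{1}{(2n+1)(n-1)}U(Z,\nabla f)$ before invoking \eqref{vvvv4}.
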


\begin{proof}
The second equality of \eqref{eqc2} can be written in the form
\begin{multline}  \label{zzz}
4n\zeta_i(I_iZ,\nabla f)=-df(Z)+(4n-2)T(\xi_i,I_iZ,\nabla f)
+\sum_{s=1}^3T(\xi_s,I_sZ,\nabla f) \\
+(4n+2)\nabla df(\xi_i,I_iZ)-\sum_{s=1}^3\nabla df(\xi_s,I_sZ) \\
= -df(Z)+(4n-2)\Big[\frac14(T^0(Z,\nabla f)-T^0(I_iZ,I_i\nabla f))-U(Z,\nabla
f)\Big]+T^0(Z,\nabla f)-3U(Z,\nabla f) \\
+(1+S)df(Z)-\frac{n+1}{n-1}U(Z,\nabla f)+(4n+2)\nabla df(\xi_i,I_iZ),
\end{multline}
where we used \eqref{need1} and \eqref{vvvv2}. Now, equalities \eqref{zzz}, \eqref{vvvv4} and the third equality in
\eqref{sixtyfour} imply
\begin{multline}  \label{e:vvvv5}
\nabla df(\xi_i,I_iZ)=-\frac{S}2df(Z) -\frac{2n+3}{4(2n+1)}\Big[T^0(Z,\nabla
f)+T^0(I_iZ,I_i\nabla f)\Big]+\frac1{(2n+1)(n-1)}U(Z,\nabla f) \\
=-df(Z)+\frac{2n+3}{4(2n+1)}\Big[T^0(Z,\nabla f)-T^0(I_iZ,I_i\nabla f)\Big]+%
\frac{2n^2+3n-1}{(2n+1)(n-1)}U(Z,\nabla f).
\end{multline}
Finally, the Ricci identity, \eqref{need1} and \eqref{vvvv5} yield
\begin{multline}  \label{e:vvvv52}
\nabla^2f(Z,\xi_i)=\nabla df(\xi_i,Z)+T(\xi_i,Z,\nabla f) \\
=\frac{S}2df(I_iZ) +\frac{1}{2(2n+1)}T^0(I_iZ,\nabla f)-\frac{n+1}{2n+1}%
T^0(Z,I_i\nabla f)+\frac{2n^2-n-2}{(2n+1)(n-1)}U(I_iZ,\nabla f) \\
=df(I_iZ)-\frac{n+1}{2n+1}\Big[T^0(I_iZ,\nabla f)+T^0(Z,I_i\nabla f)\Big]-%
\frac{4n}{(2n+1)(n-1)}U(I_iZ,\nabla f),
\end{multline}
which completes the proof.
\end{proof}

{Next, we compute  the second vertical derivatives of $f$. We start with a basic useful  identity involving only vertical derivatives.}
\begin{lemma}\label{l:D2 vert}
With the assumptions of Theorem \ref{main2}  the following identity holds
\begin{multline}  \label{xixi}
\nabla^2f(\xi_i,\xi_i)=-f -\frac{n+1}{4n(2n+1)}\Big[(\nabla_{e_a}T^0)(e_a,\nabla
f)-(\nabla_{e_a}T^0)(I_ie_a,I_i\nabla f)\Big]\\
-\frac{1}{(2n+1)(n-1)}%
(\nabla_{e_a}U)(e_a,\nabla f).
\end{multline}
\end{lemma}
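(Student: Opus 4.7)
The strategy is to compute the trace $\sum_a\nabla^3 f(\xi_i,e_a,I_ie_a)$ in two different ways and equate. On the one hand, setting $A=\xi_i$, $X=e_a$, $Y=I_ie_a$ in \eqref{nab3xi} and summing over $a$, the identities $\sum_a g(e_a,I_ie_a)=0$ and $\sum_a\omega_s(e_a,I_ie_a)=4n\delta_{si}$ yield
\begin{equation*}
\sum_a\nabla^3 f(\xi_i,e_a,I_ie_a)=-4n\,\nabla^2 f(\xi_i,\xi_i).
\end{equation*}
So it suffices to evaluate the same trace by a second route that brings in the horizontal divergences of $T^0$ and $U$.

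For this I apply the fifth Ricci identity in \eqref{boh2},
\begin{equation*}
\nabla^3 f(\xi_i,X,Y)=\nabla^3 f(X,\xi_i,Y)-R(\xi_i,X,Y,\nabla f)-\nabla^2 f(T(\xi_i,X),Y),
\end{equation*}
and substitute $X=e_a$, $Y=I_ie_a$, summing over $a$. This breaks the trace into three pieces. The first piece, $\sum_a\nabla^3 f(e_a,\xi_i,I_ie_a)$, is computed by regarding \eqref{vvvv5} as a tensor identity in $Z\in H$, applying $\nabla_{e_a}$, and tracing in $Z=e_a$. The derivative of $-df(Z)$ supplies $-\nabla df(e_a,e_a)=\Delta f=4nf$ via the trace of \eqref{eq7}; the derivatives of the torsion expressions supply the horizontal divergences $(\nabla_{e_a}T^0)(e_a,\nabla f)$, $(\nabla_{e_a}T^0)(I_ie_a,I_i\nabla f)$ and $(\nabla_{e_a}U)(e_a,\nabla f)$ with the coefficients from \eqref{vvvv5}. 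The purely algebraic remainders $T^0(e_a,\nabla_{e_a}\nabla f)$ and $U(e_a,\nabla_{e_a}\nabla f)$ vanish: indeed \eqref{eq7} gives $\nabla_{e_a}\nabla f=-fe_a-\sum_s df(\xi_s)I_se_a$, after which the traces $\sum_a T^0(e_a,e_a)$, $\sum_a T^0(e_a,I_se_a)$ and the analogous traces for $U$ all vanish by the symmetry of $T^0,U$ combined with the skew-symmetry of $I_s$ and the trace-free property recorded in \eqref{propt}. The connection corrections stemming from $\nabla_{e_a}\xi_i$ and $\nabla_{e_a}I_i$ cancel when the same tensorial differentiation is carried out on both sides of \eqref{vvvv5}.

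The second piece, $\sum_a R(\xi_i,e_a,I_ie_a,\nabla f)$, is evaluated from \eqref{d3n5}: the $\rho_j(\cdot,\xi_i)$ and $\rho_k(\cdot,\xi_i)$ traces are converted to linear combinations of horizontal divergences of $T^0$ and $U$ via \eqref{d3n6}, while the explicit $\nabla T^0$ and $\nabla U$ terms in \eqref{d3n5} produce further divergence contributions. The third piece, $\sum_a\nabla^2 f(T(\xi_i,e_a),I_ie_a)$, is handled using $T(\xi_i,X)=T^0_{\xi_i}(X)+I_iu(X)$ together with \eqref{eq7} and \eqref{vvvv52}. After invoking the vanishing results \eqref{vvv1} and the linear constraint \eqref{vvvv4}, this piece reduces to purely algebraic combinations of $T^0(\nabla f,\cdot)$, $U(\nabla f,\cdot)$, and $S$ evaluated at various $I_s\nabla f$.

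Combining the three pieces and dividing by $-4n$ produces \eqref{xixi}. The main obstacle is the delicate bookkeeping: each of the three pieces contributes divergence terms of $T^0$ and $U$ with coefficients of its own (none of which match those of the lemma), and the coefficients $-\tfrac{n+1}{4n(2n+1)}$ and $-\tfrac{1}{(2n+1)(n-1)}$ only emerge after one sums all contributions and uses the Bianchi-type identities \eqref{d3n6} to trade Ricci 2-forms for divergences. Simultaneously, the algebraic (non-divergence) remainders involving $S$, $T^0(\nabla f,\nabla f)$, and $U(\nabla f,\nabla f)$ from the three pieces must combine via \eqref{vvvv4} to cancel, leaving only the leading constant $4nf$ from the sub-Laplacian of $f$, which upon division by $-4n$ yields the $-f$ term of \eqref{xixi}.
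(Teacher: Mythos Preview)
Your strategy is valid but takes a noticeably longer route than the paper's. You commute $\xi_i$ past the first horizontal slot via the fifth Ricci identity in \eqref{boh2}, which forces you to compute the full vertical curvature $R(\xi_i,e_a,I_ie_a,\nabla f)$ through \eqref{d3n5} and then convert the resulting $\rho_j(\cdot,\xi_i)$, $\rho_k(\cdot,\xi_i)$ terms into divergences via \eqref{d3n6}; you also have to handle the torsion-Hessian trace $\sum_a\nabla^2f(T(\xi_i,e_a),I_ie_a)$ separately. The paper instead differentiates \eqref{vvvv52} to produce $\nabla^3f(X,Y,\xi_i)$ directly, takes the skew-symmetric part in $X,Y$, and applies the \emph{fourth} Ricci identity in \eqref{boh2}, whose right-hand side already contains $-2\sum_s\omega_s(X,Y)\nabla^2f(\xi_s,\xi_i)$ together with only the horizontal Ricci 2-forms $\rho_j,\rho_k$ (expressible from \eqref{sixtyfour}); tracing at $X=e_a$, $Y=I_ie_a$ then isolates $\nabla^2f(\xi_i,\xi_i)$ in one step. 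Your approach buys nothing extra here: the heavier curvature machinery of \eqref{d3n5}--\eqref{d3n6} is exactly what the paper's choice of Ricci identity avoids, and the cancellations you flag as ``delicate bookkeeping'' become automatic in the paper's route. One small point: in your third piece $T(\xi_i,e_a)$ is horizontal, so only \eqref{eq7} is needed there, not \eqref{vvvv52}.
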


\begin{proof}
Differentiating \eqref{vvvv52},  using \eqref{eq7} {and \eqref{xider} we obtain}
\begin{multline}  \label{bixi111}
\nabla^3f(X,Y,\xi_i)-\alpha_j(X)\bi^2f(Y,\xi_k)+\alpha_k(X)\bi^2f(Y,\xi_j)\\
=-\frac{n+1}{2n+1}\Big[(\nabla_XT^0)(I_iY,\nabla
f)+(\nabla_XT^0)(Y,I_i\nabla f)\Big]-\frac{4n}{(2n+1)(n-1)}(\nabla_XU)(I_iZ,\nabla f) \\
+f\Big\{\omega_i(X,Y)+\frac{n+1}{2n+1}\Big[T^0(X,I_iY)+T^0(I_iX,Y)\Big]+\frac{%
4n}{(2n+1)(n-1)}U(X,I_iY)\Big\} \\
+df(\xi_i)\Big\{-g(X,Y)+\frac{n+1}{2n+1}\Big[T^0(I_iX,I_iY)-T^0(X,Y)\Big]+%
\frac{4n}{(2n+1)(n-1)}U(X,Y)\Big\} \\
+df(\xi_j)\Big\{\omega_k(X,Y)+\frac{n+1}{2n+1}\Big[T^0(I_jX,I_iY)+T^0(I_kX,Y)%
\Big]+\frac{4n}{(2n+1)(n-1)}U(X,I_kY)\Big\} \\
+df(\xi_k)\Big\{-\omega_j(X,Y)+\frac{n+1}{2n+1}\Big[T^0(I_kX,I_iY)-T^0(I_jX,Y)%
\Big]-\frac{4n}{(2n+1)(n-1)}U(X,I_jY)\Big\}\\-\alpha_j(X)\Big[df(I_kY)
-\frac{n+1}{2n+1}T^0(I_kY,\gr)-\frac{n+1}{2n+1}T^0(Y,I_k\gr)-\frac{4n}{(2n+1)(n-1)}U(I_kY\gr)\Big]\\+\alpha_k(X)\Big[df(I_jY)-\frac{n+1}{2n+1}T^0(I_jY,\gr)-\frac{n+1}{2n+1}T^0(Y,I_j\gr)
-\frac{4n}{(2n+1)(n-1)}U(I_jY,\gr)\Big].
\end{multline}
{Applying again \eqref{vvvv52} to the second and the third terms in the first line we see that the terms involving the connection 1-forms cancel and \eqref{bixi111} takes the following form}
\begin{multline}  \label{bixi1}
\nabla^3f(X,Y,\xi_i)\\
=-\frac{n+1}{2n+1}\Big[(\nabla_XT^0)(I_iY,\nabla
f)+(\nabla_XT^0)(Y,I_i\nabla f)\Big]-\frac{4n}{(2n+1)(n-1)}(\nabla_XU)(I_iZ,\nabla f) \\
+f\Big\{\omega_i(X,Y)+\frac{n+1}{2n+1}\Big[T^0(X,I_iY)+T^0(I_iX,Y)\Big]+\frac{%
4n}{(2n+1)(n-1)}U(X,I_iY)\Big\} \\
+df(\xi_i)\Big\{-g(X,Y)+\frac{n+1}{2n+1}\Big[T^0(I_iX,I_iY)-T^0(X,Y)\Big]+%
\frac{4n}{(2n+1)(n-1)}U(X,Y)\Big\} \\
+df(\xi_j)\Big\{\omega_k(X,Y)+\frac{n+1}{2n+1}\Big[T^0(I_jX,I_iY)+T^0(I_kX,Y)%
\Big]+\frac{4n}{(2n+1)(n-1)}U(X,I_kY)\Big\} \\
+df(\xi_k)\Big\{-\omega_j(X,Y)+\frac{n+1}{2n+1}\Big[T^0(I_kX,I_iY)-T^0(I_jX,Y)%
\Big]-\frac{4n}{(2n+1)(n-1)}U(X,I_jY)\Big\}.
\end{multline}
{On the other hand, the skew-symmetric part of \eqref{bixi1} and  the Ricci identity listed in the fourth line of \eqref{boh2}  yield}
\begin{multline}  \label{ricxi}
\nabla^3f(X,Y,\xi_i)-\nabla^3f(Y,X,\xi_i) \\
=-\frac{n+1}{2n+1}\Big[(\nabla_XT^0)(I_iY,\nabla f)+(\nabla_XT^0)(Y,I_i\nabla f)%
-(\nabla_YT^0)(I_iX,\nabla f)-(\nabla_YT^0)(X,I_i\nabla f)\Big]\\
-\frac{4n}{(2n+1)(n-1)}\Big[(\nabla_XU)(I_iY,\nabla
f)-(\nabla_YU)(I_iX,\nabla f)\Big] +2f\Big[\omega_i(X,Y)+\frac{4n}{%
(2n+1)(n-1)}U(X,I_iY)\Big] \\
+2df(\xi_j)\Big\{\omega_k(X,Y)+\frac{n+1}{2n+1}\Big[T^0(I_kX,Y)-T^0(X,I_kY)%
\Big]+\frac{4n}{(2n+1)(n-1)}U(X,I_kY)\Big\} \\
+2df(\xi_k)\Big\{-\omega_j(X,Y)+\frac{n+1}{2n+1}\Big[T^0(X,I_jY)-T^0(I_jX,Y)%
\Big]-\frac{4n}{(2n+1)(n-1)}U(X,I_jY)\Big\}\\
=-2df(\xi_j)\rho_k(X,Y)+2df(\xi_k)\rho_j(X,Y)-2\sum_{s=1}^3\omega_s(X,Y)%
\nabla^2f(\xi_s,\xi_i) .
\end{multline}
The trace  $X=e_a, Y=I_ie_a$ of  \eqref{ricxi}  and the second equality of \eqref{sixtyfour} give \eqref{xixi},
which completes the proof.
\end{proof}
\begin{rmrk}\label{rem00}{ The detailed proof of \eqref{bixi1} shows  a particular consequence of \eqref{xider} which is that  a covariant derivative of identities that are not  $Sp(1)$ invariant can lead to formulas which do not involve the connection one-forms.   In the rest of the paper we shall usually skip many straightforward calculations some of which rely on a similar use of \eqref{xider}.}
\end{rmrk}

\subsection{The elliptic eigenvalue problem}
In this sub-section we  will show that \eqref{eq7} implies that $f$ satisfies an elliptic PDE.
{ Let $\triangle ^h$ be the  Riemannian Laplacian of the  metric \eqref{riem1}. }

\begin{lemma}
\label{rrlem} On a qc manifold of dimension bigger than seven any smooth
function satisfying \eqref{eq7} obeys the following identity
\begin{equation}  \label{llex}
\triangle^h f=(4n+3)f+\frac{n+1}{n(2n+1)}(\nabla_{e_a}T^0)(e_a,\nabla f)+%
\frac{3}{(2n+1)(n-1)}(\nabla_{e_a}U)(e_a,\nabla f).
\end{equation}
\end{lemma}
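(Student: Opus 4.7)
Since $TM=H\oplus V$ is an orthogonal decomposition with respect to $h$, and $\{e_1,\dots,e_{4n},\xi_1,\xi_2,\xi_3\}$ is a local $h$-orthonormal frame, I would begin by writing
\[
\triangle^h f \;=\; -\sum_{a=1}^{4n} (\nabla^h)^2 f(e_a,e_a)\;-\;\sum_{s=1}^{3}(\nabla^h)^2 f(\xi_s,\xi_s),
\]
and compare each summand with the Biquard Hessian using \eqref{lcbi}. A direct substitution shows
\[
(\nabla^h)^2 f(A,B)-\nabla^2 f(A,B)=\tfrac12\bigl[h(T(A,B),\nabla^h f)-h(T(B,\nabla^h f),A)+h(T(\nabla^h f,A),B)\bigr],
\]
so that the diagonal trace produces the single term $-h(T(E,\nabla^h f),E)$ after using antisymmetry of $T$.

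Next I would verify that both the horizontal and vertical traces of this correction vanish. For the horizontal trace, writing $\nabla^h f=\nabla f+\sum_s df(\xi_s)\xi_s$, the contribution from $\nabla f$ vanishes because $T(e_a,\nabla f)=-[e_a,\nabla f]_{|V}\in V$ by \eqref{torha}, while the contribution from each $\xi_s$ vanishes because $T_{\xi_s}=T_{\xi_s}^{0}+I_s u$ is trace-free, cf. the general properties of the Biquard torsion and $\operatorname{tr}(I_s u)=0$ since $u$ commutes with $I_s$ and is symmetric. For the vertical trace, $T(\xi_s,\xi_s)=0$, $T(\xi_s,\nabla f)\in H$, and $T(\xi_s,\xi_t)$ has $V$-component proportional to $\xi_k$ with $k\ne s$ by \eqref{sixtyfour}; all diagonal pairings with $\xi_s$ vanish. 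Hence
\[
\triangle^h f\;=\;-\sum_{a}\nabla^2 f(e_a,e_a)-\sum_{s}\nabla^2 f(\xi_s,\xi_s)\;=\;\triangle f-\sum_{s}\nabla^2 f(\xi_s,\xi_s).
\]

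Taking the horizontal trace of \eqref{eq7} and using $\omega_s(e_a,e_a)=0$ together with $g(e_a,e_a)=4n$ gives immediately $\triangle f=4nf$. To handle the vertical piece I would sum \eqref{xixi} over $s=1,2,3$; the only nontrivial step is to simplify
\[
\sum_{i=1}^3\Bigl[(\nabla_{e_a}T^0)(e_a,\nabla f)-(\nabla_{e_a}T^0)(I_ie_a,I_i\nabla f)\Bigr].
\]
Here I would covariantly differentiate the $Sp(n)Sp(1)$-invariance identity $T^0(X,Y)+\sum_i T^0(I_iX,I_iY)=0$ from \eqref{propt}. The extra terms involving $(\nabla_Z I_i)X=-\alpha_j(Z)I_kX+\alpha_k(Z)I_jX$ cancel pairwise after the cyclic sum over $(ijk)$ (a phenomenon already noted in Remark~3.4), leaving
\[
(\nabla_Z T^0)(X,Y)+\sum_{i=1}^3(\nabla_Z T^0)(I_iX,I_iY)=0.
\]
Contracting with $Z=X=e_a$, $Y=\nabla f$ yields $\sum_{i,a}(\nabla_{e_a}T^0)(I_ie_a,I_i\nabla f)=-(\nabla_{e_a}T^0)(e_a,\nabla f)$, so the sum above equals $4(\nabla_{e_a}T^0)(e_a,\nabla f)$.

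Plugging this back, the summed \eqref{xixi} becomes
\[
\sum_{s}\nabla^2 f(\xi_s,\xi_s)=-3f-\frac{n+1}{n(2n+1)}(\nabla_{e_a}T^0)(e_a,\nabla f)-\frac{3}{(2n+1)(n-1)}(\nabla_{e_a}U)(e_a,\nabla f),
\]
and adding $4nf$ yields \eqref{llex}. The main (minor) obstacle is the verification of the invariance of \eqref{propt} under $\nabla$, i.e., the cyclic cancellation of the $sp(1)$-connection $1$-forms; everything else is a direct trace computation using the previously established formulas.
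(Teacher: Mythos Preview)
Your proof is correct and follows essentially the same route as the paper: establish $\triangle^h f=\triangle f-\sum_s\nabla^2 f(\xi_s,\xi_s)$ from \eqref{lcbi} and the torsion traces, then sum \eqref{xixi} using \eqref{propt} to obtain \eqref{xixis}, and combine with $\triangle f=4nf$. Your explicit verification that the identity in \eqref{propt} is $\nabla$-parallel (via the cyclic cancellation of the $sp(1)$-connection $1$-forms) is a detail the paper leaves implicit.
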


\begin{proof}
It is shown in \cite[Lemma~5.1]{IPV1} that the Riemannian Laplacian $%
\triangle^h$ and the sub-Laplacian $\triangle$ of a smooth function $f$ are
connected by
\begin{equation}  \label{req18}
\triangle^h f=\triangle f-\sum_{s=1}^3\nabla^2f(\xi_s,\xi_s).
\end{equation}
Equation \eqref{req18} is a consequence of  the formula \eqref{lcbi}, $%
\triangle^hf=-\sum_{a=1}^{4n}\nabla^hdf(e_a,e_a)-\sum_{s=1}^3\nabla^hdf(%
\xi_s,\xi_s)$,  and the identities $T(e_a,A,e_a)=T(\xi_s,A,%
\xi_s)=0$, $A\in \Gamma(TM)$ which follow from the properties of the
torsion tensor $T$ of $\nabla$ listed in \eqref{sixtyfour}.  Lemma \ref{l:D2 vert} and \eqref{propt} imply
\begin{equation}  \label{xixis}
\sum_{s=1}^3\nabla^2f(\xi_s,\xi_s)=-3f-\frac{n+1}{n(2n+1)}%
(\nabla_{e_a}T^0)(e_a,\nabla f)-\frac{3}{(2n+1)(n-1)}(\nabla_{e_a}U)(e_a,%
\nabla f).
\end{equation}
A substitution of \eqref{xixis} in \eqref{req18}, taking into account that $f$ satisfies \eqref{eq7} hence $%
\triangle f=4nf$, we obtain \eqref{llex} which proves the lemma.
\end{proof}
A consequence of Lemma \ref{rrlem} and  Aronsajn's unique continuation result, \cite{Ar}, is that $|\nabla f|$ cannot vanish on any open set. We note this important fact in the next remark.
\begin{rmrk}
\label{eell} If $M$ and $f$ are as in Theorem \ref{main2} then  $|\nabla f|\not=0$ in a dense set since $f\not= const$.
\end{rmrk}

\subsection{Formulas for the torsion tensors}
In this sub-section we derive formulas for  the components $T^0$ and $U$ of the torsion tensor.

\begin{lemma}\label{l:torsions}
With the assumption of Theorem \ref{main2} the following identities hold true for any $X, Y, Z \in H$
\begin{equation}  \label{t02}
T^0(I_s\nabla f,I_t\nabla f)=0, \quad s\not=t, \ s,\, t\in \{1,2,3\},
\end{equation}
\begin{equation}  \label{tu}
T^0(\nabla f,\nabla f)=-\frac{6n}{n-1}U(\nabla f,\nabla f), \qquad
T^0(I_s\nabla f,I_s\nabla f)=\frac{2n}{n-1}U(\nabla f,\nabla f),\qquad  s\in \{1,2,3\},
\end{equation}
\begin{equation}  \label{t0u}
|\nabla f|^2T^0(Z,\nabla f)=-\frac{6n}{n-1}U(\nabla f,\nabla f)df(Z),\qquad
|\nabla f|^2U(Z,\nabla f)=U(\nabla f,\nabla f)df(Z),
\end{equation}
\begin{equation}\label{t000}
|\nabla f|^{4}T^{0}(X,Y)=-\frac{2n}{n-1}U(\nabla f,\nabla f)\Big[%
3df(X)df(Y)-\sum_{s=1}^{3}df(I_{s}X)df(I_{s}Y)\Big],
\end{equation}%
\begin{equation}\label{u000}
|\nabla f|^{4}U(Z,X)=-\frac{1}{n-1}U(\nabla f,\nabla f)\Big[|\nabla
f|^{2}g(Z,X)-n\Big(df(Z)df(X)+\sum_{s=1}^{3}df(I_{s}Z)df(I_{s}X)\Big)\Big].
\end{equation}%
\end{lemma}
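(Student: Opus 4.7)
The plan is to prove the five identities in order, bootstrapping from the vanishing relations \eqref{vvv1} and using the curvature identity \eqref{eqc1}, the mixed-Hessian formulas of Lemma \ref{l:D2mixed}, the algebraic symmetries \eqref{propt} of $T^{0}$ and $U$, and the linear relation \eqref{vvvv4}. For \eqref{t02}, I would substitute $(Z,X,Y)=(\nabla f, I_{s}\nabla f, I_{t}\nabla f)$, $s\neq t$, into \eqref{eqc1}; the first bracket vanishes because $df(I_{r}\nabla f)=0=g(I_{r}\nabla f,\nabla f)$ for all $r$, while in the remaining brackets the three $\omega_{r}$-factors $\omega_{r}(I_{s}\nabla f, I_{t}\nabla f)$, $\omega_{r}(\nabla f, I_{t}\nabla f)$, $\omega_{r}(\nabla f, I_{s}\nabla f)$ are nonzero only for $r$ equal to the index complementary to $\{s,t\}$, $r=t$, and $r=s$, respectively. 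Using \eqref{vvvv5}, \eqref{need1}, \eqref{vvv1} and \eqref{vvvv4} to evaluate the surviving Hessian and torsion contributions, the identity reduces to $R(\nabla f, I_{s}\nabla f, I_{t}\nabla f, \nabla f) = c_{n}|\nabla f|^{2}T^{0}(I_{s}\nabla f, I_{t}\nabla f)$ with a nonzero explicit coefficient $c_{n}$. An independent expression for the same curvature component, extracted from the $[-1]$-eigenspace projection of the $(X,Y)$-dependence in \eqref{eqc1} matched against $T^{0}$ via the Ricci 2-form formula in \eqref{sixtyfour}, closes the argument and gives $T^{0}(I_{s}\nabla f, I_{t}\nabla f)=0$ on the set $\{|\nabla f|\neq 0\}$ (dense by Remark \ref{eell}), hence everywhere by continuity.

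Given \eqref{t02}, the identity \eqref{tu} follows from the first identity in \eqref{propt} applied to $T^{0}$ at $X=Y=\nabla f$, producing $T^{0}(\nabla f,\nabla f)+\sum_{s}T^{0}(I_{s}\nabla f, I_{s}\nabla f)=0$, together with the structural identities $T^{0}_{\xi_{i}}I_{i}=-I_{i}T^{0}_{\xi_{i}}$ and the mixed $+--$, $-+-$, $--+$ relations, which force the three diagonal values to coincide so that each equals $-\tfrac{1}{3}T^{0}(\nabla f,\nabla f)$. The proportionality with $U(\nabla f,\nabla f)$ is pinned down by combining the Ricci 2-form formula in \eqref{sixtyfour} at $X=Y=\nabla f$ with \eqref{vvvv4} at $Z=\nabla f$. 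To obtain \eqref{t0u} I would use \eqref{eqc1} restricted to $Z\perp\{\nabla f, I_{s}\nabla f\}$ in order to show $U(Z,\nabla f)=0$ for such $Z$; by the $[3]$-symmetry of $U$ from \eqref{propt} and \eqref{vvv1}, this makes $U(\cdot,\nabla f)$ proportional to $df(\cdot)$, and \eqref{vvvv4} then yields the analogous proportionality for $T^{0}(\cdot,\nabla f)$. Finally, \eqref{t000} and \eqref{u000} follow from the $Sp(n)Sp(1)$-decomposition of symmetric $(0,2)$-tensors on $H$ into $[3]$- and $[-1]$-components: since $T^{0}$ is purely $[-1]$ and $U$ is purely $[3]$, and since their values on all pairs from $\mathrm{span}\{\nabla f, I_{1}\nabla f, I_{2}\nabla f, I_{3}\nabla f\}$ are now determined by \eqref{tu}, \eqref{vvv1}, \eqref{t02}, and \eqref{t0u}, the Casimir-symmetry constraints extend these data to the stated pointwise formulas.

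The main obstacle will be Step 1, the proof of \eqref{t02}. Although the substitution into \eqref{eqc1} produces an explicit formula expressing $R(\nabla f, I_{s}\nabla f, I_{t}\nabla f, \nabla f)$ as a nonzero multiple of $T^{0}(I_{s}\nabla f, I_{t}\nabla f)$, obtaining a second independent expression for this curvature component requires carefully leveraging the algebraic structure of the qc curvature tensor, in particular the $[-1]$-projection of the Casimir operator $\Upsilon$ combined with the non-standard curvature symmetries coming from the Biquard torsion. A secondary technical point is the equality of the three diagonal values $T^{0}(I_{s}\nabla f, I_{s}\nabla f)$ in Step 2, which relies on the precise form of the commutation relations among the $T^{0}_{\xi_{i}}$ from Biquard's structural results.
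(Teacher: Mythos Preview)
Your overall strategy---extract relations from the curvature formula \eqref{eqc1} and close them with the algebraic symmetries of $T^0$ and $U$---is in the right spirit, but two of your steps contain genuine gaps.

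\textbf{Step 2 (the identity \eqref{tu}).} You claim that the structural relations $T^{0}_{\xi_i}I_i=-I_iT^{0}_{\xi_i}$ and the $\pm\pm\pm$ commutation identities force $T^0(I_1\nabla f,I_1\nabla f)=T^0(I_2\nabla f,I_2\nabla f)=T^0(I_3\nabla f,I_3\nabla f)$. They do not: the only purely algebraic constraint on these diagonal values is the $[-1]$-trace condition \eqref{propt}, which gives a single linear relation among four numbers. The paper obtains the missing information from \eqref{t012}, namely $T^0(\nabla f,\nabla f)+T^0(I_s\nabla f,I_s\nabla f)=-\tfrac{4n}{n-1}U(\nabla f,\nabla f)$ for each $s$, and this pairwise relation is what forces the three values to coincide. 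Deriving \eqref{t012} requires first obtaining the master identity \eqref{vvvv58}, which in turn rests on comparing \eqref{eqc1} against the external $[-1]$-curvature formula \eqref{comp1} from \cite{IV,IV2}. Your sketch alludes to a ``$[-1]$-eigenspace projection'' but never names \eqref{comp1}; without it you have only one equation where two are needed.

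\textbf{Steps for \eqref{t000} and \eqref{u000}.} You assert that once $T^0$ and $U$ are known on the quaternionic line $V_f=\mathrm{span}\{\nabla f,I_1\nabla f,I_2\nabla f,I_3\nabla f\}$, the Casimir symmetries extend them to the full formulas. This is false: a traceless symmetric $[-1]$-tensor on $H$ (dimension $4n$) has many more degrees of freedom than its restriction to a single quaternionic line, and likewise for a $[3]$-tensor. The formulas \eqref{t000} and \eqref{u000} assert in particular that $T^0(X,Y)=0$ and $U(X,Y)$ is proportional to $g(X,Y)$ whenever $X,Y\perp V_f$---strong rank constraints that cannot come from symmetry alone. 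The paper extracts them by substituting various specializations (e.g.\ $Z=\nabla f$, $Y=I_1\nabla f$) into the full identity \eqref{vvvv58}, producing \eqref{vvvv59} and \eqref{u}, which express $|\nabla f|^2 T^0(X,Y)$ and $|\nabla f|^2 U(I_1Z,X)$ for \emph{arbitrary} $X,Y,Z$ in terms of already-controlled quantities.

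In short, the engine of the paper's proof is the identity \eqref{vvvv58}, obtained by subtracting \eqref{comp1} from the expression \eqref{vvvv57} (itself computed from \eqref{eqc1} and Lemma \ref{l:D2mixed}). Your proposal bypasses this identity and tries to close the argument with pure representation theory, which does not supply enough equations.
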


\begin{proof}
To determine the torsion tensors $T^0$ and $U$   we are going to apply the
following identity \cite{IV,IV2} for the $[-1]$ component of the curvature
\begin{multline}
3R(Z,X,Y,\nabla f)-R(I_{1}Z,I_{1}X,Y,\nabla f)-R(I_{2}Z,I_{2}X,Y,\nabla
f)-R(I_{3}Z,I_{3}X,Y,\nabla f)  \label{comp1} \\
=2\Big[g(X,Y)T^{0}(Z,\nabla f)+g(Z,\nabla f)T^{0}(Y,X)-g(Y,Z)T^{0}(X,\nabla
f)-g(\nabla f,X)T^{0}(Y,Z)\Big] \\
-2\sum_{s=1}^{3}\Big[\omega _{s}(X,Y)T^{0}(Z,I_{s}\nabla f)+\omega
_{s}(Z,\nabla f)T^{0}(X,I_{s}Y)-\omega _{s}(Z,Y)T^{0}(X,I_{s}\nabla
f)-\omega _{s}(X,\nabla f)T^{0}(Z,I_{s}Y)\Big] \\
+\sum_{s=1}^{3}\Big[2\omega _{s}(Z,X)\Big(T^{0}(Y,I_{s}\nabla
f)-T^{0}(I_{s}Y,\nabla f)\Big)-8\omega _{s}(Y,\nabla f)U(I_{s}Z,X)-4S\omega
_{s}(Z,X)\omega _{s}(Y,\nabla f)\Big].
\end{multline}
With the help of the Ricci identity, cf. the  second equality of %
\eqref{boh2}, we write the curvature tensor given by \eqref{eqc1} in the form
\begin{multline}  \label{vvvv53}
R(Z,X,Y,\nabla f)=\Big[df(Z)g(X,Y)-df(X)g(Z,Y)\Big] \\
+\sum_{s=1}^{3}\Big[\nabla df(Z,\xi _{s})\omega _{s}(X,Y)-\nabla df(X,\xi
_{s})\omega _{s}(Z,Y)-2\nabla df(\xi _{s},Y)\omega _{s}(Z,X)\Big].
\end{multline}
A calculation shows
\begin{multline}  \label{vvvv56}
\sum_{t=1}^3R(I_tZ,I_tX,Y,\nabla f)=\sum_{s=1}^3\Big[df(I_sZ)%
\omega_s(X,Y)-df(I_sX)\omega_s(Z,Y)\Big] \\
+\sum_{s,t=1}^{3}\Big[\nabla df(I_tZ,\xi _{s})\omega _{s}(I_tX,Y)-\nabla
df(I_tX,\xi _{s})\omega _{s}(I_tZ,Y)-2\nabla df(\xi _{s},Y)\omega
_{s}(I_tZ,I_tX)\Big] \\
=\sum_{s=1}^3\Big[df(I_sZ)\omega_s(X,Y)-df(I_sX)\omega_s(Z,Y)+2\nabla df(\xi
_{s},Y)\omega _{s}(Z,X)\Big] \\
-g(X,Y)\sum_{s=1}^{3}\nabla df(I_sZ,\xi _{s})+g(Z,Y)\sum_{s=1}^{3}\nabla df(I_sX,\xi _{s}) -\sum_{(ijk)}\omega_i(X,Y)\Big[%
\nabla df(I_jZ,\xi _{k})-\nabla df(I_kZ,\xi _{j})\Big] \\
 +\sum_{(ijk)}\omega_i(Z,Y)\Big[%
\nabla df(I_jX,\xi _{k})-\nabla df(I_kX,\xi _{j})\Big],
\end{multline}
where $\sum_{(ijk)}$ denotes the cyclic sum. Now, \eqref{vvvv53} and \eqref{vvvv56} together with
\eqref{vvvv5} and \eqref{vvvv52} yield
\begin{multline}  \label{vvvv57}
3R(Z,X,Y,\nabla f)-R(I_{1}Z,I_{1}X,Y,\nabla f)-R(I_{2}Z,I_{2}X,Y,\nabla
f)-R(I_{3}Z,I_{3}X,Y,\nabla f) \\
= g(X,Y)\Big[3df(Z)+\sum_{s=1}^3\nabla^2f(I_sZ,\xi_s)\Big]-g(Z,Y)\Big[%
3df(X)+\sum_{s=1}^3\nabla^2f(I_sX,\xi_s)\Big]-8\sum_{s=1}^3\omega
_{s}(Z,X)\nabla df(\xi _{s},Y) \\
+\sum_{(ijk)}\omega_i(X,Y)\Big[3\nabla^2f(Z,\xi_i)-df(I_iZ)+\nabla^2f(I_jZ,%
\xi_k)-\nabla^2f(I_kZ,\xi_j) \Big] \\
-\sum_{(ijk)}\omega_i(Z,Y)\Big[3\nabla^2f(X,\xi_i)-df(I_iX)+\nabla^2f(I_jX,%
\xi_k)-\nabla^2f(I_kX,\xi_j) \Big] \\
=g(X,Y)\Big[\frac{4n+4}{2n+1}T^0(Z,\nabla f)+\frac{12n}{(2n+1)(n-1)}%
U(Z,\nabla f)\Big] \\
-g(Z,Y)\Big[\frac{4n+4}{2n+1}T^0(X,\nabla f)+\frac{12n}{(2n+1)(n-1)}%
U(X,\nabla f)\Big] \\
-\sum_{s=1}^3\omega _{s}(Z,X)\Big[4Sdf(I_sY) +\frac{4n+6}{2n+1}\Big[%
T^0(I_sY,\nabla f)-T^0(Y,I_s\nabla f)\Big]-\frac{8}{(2n+1)(n-1)}%
U(I_sY,\nabla f) \\
-\sum_{s=1}^3\omega_s(X,Y)\Big\{\frac{4n+4}{2n+1}T^0(Z,I_s\nabla f)+\frac{4n%
}{(2n+1)(n-1)}U(I_sZ,\nabla f)\Big\} \\
+\sum_{s=1}^3\omega_s(Z,Y)\Big\{\frac{4n+4}{2n+1}T^0(X,I_s\nabla f)+\frac{4n%
}{(2n+1)(n-1)}U(I_sX,\nabla f)\Big\}.
\end{multline}
Subtracting \eqref{comp1} from \eqref{vvvv57} and applying \eqref{vvvv5}, %
\eqref{vvvv52} and the properties of the torsion we come to
\begin{multline}  \label{vvvv58}
0= g(X,Y)\Big[T^0(Z,\nabla f)+\frac{6n}{n-1}U(Z,\nabla f)\Big]-g(Z,Y)\Big[%
T^0(X,\nabla f)+\frac{6n}{n-1}U(X,\nabla f)\Big] \\
-\sum_{s=1}^3\omega_s(X,Y)\Big[T^0(Z,I_s\nabla f)+\frac{2n}{n-1}%
U(I_sZ,\nabla f) \Big] +\sum_{s=1}^3\omega_s(Z,Y)\Big[T^0(X,I_s\nabla f)+%
\frac{2n}{n-1}U(I_sX,\nabla f) \Big] \\
-\sum_{s=1}^3\omega_s(Z,X)\Big[2T^0(I_sY,\nabla f)-2T^0(Y,I_s\nabla f)-\frac{%
4}{n-1}U(I_sY,\nabla f) \Big] \\
-(2n+1)\sum_{s=1}^3\Big[%
df(I_sX)T^0(Z,I_sY)-df(I_sZ)T^0(X,I_sY)-4df(I_sY)U(I_sZ,X)\Big] \\
+(2n+1)df(X)T^0(Z,Y)-(2n+1)df(Z)T^0(X,Y).
\end{multline}
Setting $Z=\nabla f$ into \eqref{vvvv58}, after some calculations, we obtain
\begin{multline}  \label{vvvv59}
(2n+1)|\nabla f|^2T^0(X,Y)=(2n+1)df(X)T^0(\nabla f,Y) \\
+g(X,Y)\Big[T^0(\nabla f,\nabla f)+\frac{6n}{n-1}U(\nabla f,\nabla f)\Big]%
-df(Y)\Big[T^0(X,\nabla f)+\frac{6n}{n-1}U(X,\nabla f)\Big] \\
-\sum_{s+1}^3df(I_sY)\Big[T^0(X,I_s\nabla f)+\frac{8n^2-2n-4}{n-1}%
U(I_sX,\nabla f) \Big] \\
-\sum_{s+1}^3df(I_sX)\Big[2T^0(Y,I_s\nabla f)+(2n-1)T^0(I_sY,\nabla f)+\frac{%
4}{n-1}U(I_sY,\nabla f) \Big].
\end{multline}
Letting $Y=\nabla f$ in \eqref{vvvv59}, then using \eqref{t0} and \eqref{vvv1} shows
\begin{equation}  \label{t0}
|\nabla f|^2T^0(X,\nabla f)=T^0(\nabla f,\nabla f)df(X)+\frac{3n}{(n+1)(n-1)}%
\Big[U(\nabla f,\nabla f)df(X)-|\nabla f|^2U(X,\nabla f)\Big].
\end{equation}
On the other hand, letting $X=I_1\nabla f$ in \eqref{vvvv59}, using \eqref{vvv1} and \eqref{t0} gives
\begin{multline}  \label{t01}
0=-df(I_1Y)\Big[ T^0(\nabla f,\nabla f)+T^0(I_1\nabla f,I_1\nabla f) -\frac{%
8n^2-8n-4}{n-1}U(\nabla f,\nabla f)\Big] \\
-df(I_2Y)T^0(I_1\nabla f,I_2\nabla f)-df(I_3Y)T^0(I_1\nabla f,I_3\nabla f) \\
-(2n-1)|\nabla f|^2\Big[T^0(Y,I_1\nabla f)-T^0(I_1Y,\nabla f)\Big]+|\nabla
f|^2\frac{4}{n-1}U(I_1Y,\nabla f).
\end{multline}
From  \eqref{t01} with $Y=I_2\nabla f$  and  \eqref{vvvv1}
the identity \eqref{t02} follows since $|\nabla f|^2\not=0$.

Setting $Y=I_1\nabla f$ into \eqref{t01} implies
\begin{equation}  \label{t012}
T^0(\nabla f,\nabla f)+T^0(I_1\nabla f,I_1\nabla f)=-\frac{4n}{n-1}U(\nabla
f,\nabla f).
\end{equation}
The latter equality together with the \eqref{propt} yield \eqref{tu}.

The equalities \eqref{t01}, \eqref{t02} and \eqref{tu} imply
\begin{multline}  \label{t011}
(2n-1)|\nabla f|^2T^0(Y,I_s\nabla f)=(2n-1)|\nabla f|^2T^0(I_sY,\nabla
f)+|\nabla f|^2\frac{4}{n-1}U(I_sY,\nabla f) \\
-df(I_sY)\Big[ T^0(\nabla f,\nabla f)+T^0(I_s\nabla f,I_s\nabla f) -\frac{%
8n^2-8n-4}{n-1}U(\nabla f,\nabla f)\Big] \\
=(2n-1)|\nabla f|^2T^0(I_sY,\nabla f)+|\nabla f|^2\frac{4}{n-1}U(I_sY,\nabla
f)+4(2n+1)df(I_sY)U(\nabla f,\nabla f).
\end{multline}
Let $Y=I_1\nabla f$ in \eqref{vvvv58} in order to see
\begin{multline}  \label{u}
2(2n+1)|\nabla f|^2U(I_1Z,X)=\omega_1(Z,X)\Big[T^0(\nabla f,\nabla
f)+T^0(I_1\nabla f,I_1\nabla f)-\frac2{n-1}U(\nabla f,\nabla f)\Big] \\
+ndf(X)\Big[T^0(Z,I_1\nabla f)-\frac1{n-1}U(I_1Z,\nabla f)\Big]-ndf(Z)\Big[%
T^0(X,I_1\nabla f)-\frac1{n-1}U(I_1X,\nabla f)\Big] \\
+ndf(I_1X)\Big[T^0(Z,\nabla f)-\frac3{n-1}U(Z,\nabla f)\Big]-ndf(I_1Z)\Big[%
T^0(X,\nabla f)-\frac3{n-1}U(X,\nabla f)\Big] \\
+ndf(I_2X)\Big[T^0(Z,I_3\nabla f)-\frac1{n-1}U(I_3Z,\nabla f)\Big]-ndf(I_2Z)%
\Big[T^0(X,I_3\nabla f)-\frac1{n-1}U(I_3X,\nabla f)\Big] \\
-ndf(I_3X)\Big[T^0(Z,I_2\nabla f)-\frac1{n-1}U(I_2Z,\nabla f)\Big]+ndf(I_3Z)%
\Big[T^0(X,I_2\nabla f)-\frac1{n-1}U(I_2X,\nabla f)\Big].
\end{multline}
Letting $X=\nabla f$ in \eqref{u} and applying \eqref{tu} we obtain
\begin{equation}  \label{tu1}
(4n^2-n-2)U(Z,\nabla f)|\nabla f|^2=(6n^2-n-2)U(\nabla f,\nabla
f)df(Z)-n(n-1)T^0(I_1Z,I_1\nabla f)|\nabla f|^2.
\end{equation}
Therefore,
\begin{equation}  \label{tu2}
|\nabla f|^2\Big[n(n-1)T^0(Z,\nabla f)-3(4n^2-n-2)U(Z,\nabla f)\Big]%
=-3(6n^2-n-2)U(\nabla f,\nabla f)df(Z).
\end{equation}
On the other hand, taking into account \eqref{tu},  equality \eqref{t0} yields
\begin{equation}  \label{tu3}
|\nabla f|^2\Big[(n^2-1)T^0(Z,\nabla f)+3nU(Z,\nabla f)\Big]%
=-3n(2n+1)U(\nabla f,\nabla f)df(Z).
\end{equation}
Solving the system of equations \eqref{tu2} and \eqref{tu3}, we obtain \eqref{t0u}.

A substitution of \eqref{t0u} and \eqref{t0us} in \eqref{vvvv59} gives \eqref{t000}. Now, a substitution of  \eqref{t0u} and \eqref{t0us} in \eqref{u} shows \eqref{u000}.
\end{proof}
We finish this section with a few useful facts. As a direct corollary  from \eqref{vvvv4}, \eqref{t0u} and \eqref{t0us} it follows
\begin{equation} \label{stu}
|\nabla f|^{2}(S-2)=\frac{4(n+1)}{n-1}U(\nabla f,\nabla f).
\end{equation}%
In addition, from  \eqref{t0u} and \eqref{t011} it follows
\begin{equation}  \label{t0us}
|\nabla f|^2T^0(Z,I_s\nabla f)=-\frac{2n}{n-1}U(\nabla f,\nabla f)df(I_sZ),
\quad |\nabla f|^2T^0(I_sY,I_s\nabla f)=\frac{2n}{n-1}U(\nabla f,\nabla
f)df(Z).
\end{equation}
The equalities \eqref{t0u} and \eqref{t0us} yield
\begin{equation}  \label{tss}
T^0(I_sZ,\nabla f)=3T^0(Z,I_s\nabla f), \quad T^0(Z,\nabla
f)=-3T^0(I_sZ,I_s\nabla f),\quad T^0(Z,\nabla f)=-\frac{6n}{n-1}U(Z,\nabla
f),
\end{equation}

\subsection{Formulas for the covariant derivatives of the torsion tensors}
Here we shall prove formulas for the covariant derivative of the torsion tensor.
\begin{lemma}\label{l:div torsion}
If  $M$ and $f$ are as in Theorem \ref{main2}, then we have the following identities for the covariant derivatives of the torsion tensor at the points where $|\gr|\not=0$,
\begin{multline}  \label{nablat0a}
|\nabla f|^2(\nabla_ZT^0)(X,Y)= \frac{4n+2}{n+2}fdf(Z)T^0(X,Y) \\
-\frac{2n}{n-1}\frac{U(\nabla f,\nabla f)}{|\nabla f|^2} f\Big[-3df(Y)g(X,Z)-3df(X)g(Y,Z)
+\sum_{s=1}^3 \left ( df(I_sY)\omega_s(X,Z)+df(I_sX)\omega_s(Y,Z)\right )\Big] \\
-\frac{2n}{n-1}\frac{U(\nabla f,\nabla f)}{|\nabla f|^2} \sum_{(ijk)}df(\xi_i)\Big[%
3df(Y)\omega_i(X,Z)+df(I_iY)g(X,Z)-df(I_jY)\omega_k(X,Z)+df(I_kY)%
\omega_j(X,Z)\Big]\\
-\frac{2n}{n-1}\frac{U(\nabla f,\nabla f)}{|\nabla f|^2} \sum_{(ijk)}df(\xi_i)\Big[%
3df(X)\omega_i(Y,Z)+df(I_iX)g(Y,Z)-df(I_jX)\omega_k(Y,Z)+df(I_kX)%
\omega_j(Y,Z)\Big]
\end{multline}
and
\begin{multline}
|\nabla f|^{2}(\nabla _{Z}U)(X,Y)-2fdf(Z)U(X,Y)-2\sum_{s=1}^{3}df(\xi
_{s})df(I_{s}Z)U(X,Y)=\frac{2n-2}{n+2}fdf(Z)U(X,Y)  \label{vajno} \\
-2\sum_{s=1}^{3}df(\xi _{s})df(I_{s}Z)U(X,Y)-\frac{1}{n-1}\frac{U(\nabla
f,\nabla f)}{|\nabla f|^{2}}\Big[-2fdf(Z)-2\sum_{s=1}^{3}df(\xi
_{s})df(I_{s}Z)\Big]g(X,Y) \\
-\frac{n}{n-1}\frac{U(\nabla f,\nabla f)}{|\nabla f|^{2}}f\Big[%
df(Y)g(X,Z)+ df(X)g(Y,Z)+\sum_{s=1}^3 \left (df(I_{s}Y)\omega _{s}(X,Z)+df(I_{s}X)\omega _{s}(Y,Z)\right )\Big] \\
+\frac{n}{n-1}\frac{U(\nabla f,\nabla f)}{|\nabla f|^{2}}\sum_{(ijk)}df(\xi _{i})\Big[%
df(Y)\omega _{i}(X,Z)-df(I_{i}Y)g(X,Z)+df(I_{j}Y)\omega
_{k}(X,Z)-df(I_{k}Y)\omega _{j}(X,Z)\Big] \\
+\frac{n}{n-1}\frac{U(\nabla f,\nabla f)}{|\nabla f|^{2}}\sum_{(ijk)}df(\xi _{i})\Big[%
df(X)\omega _{i}(Y,Z)-df(I_{i}X)g(Y,Z)+df(I_{j}X)\omega
_{k}(Y,Z)-df(I_{k}X)\omega _{j}(Y,Z)\Big],
\end{multline}
where $\sum_{(ijk)}$ means the cyclic sum, cf. Convention \ref{conven}.
\end{lemma}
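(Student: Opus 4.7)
The strategy is to prove both identities by directly differentiating the algebraic formulas \eqref{t000} and \eqref{u000} with respect to the Biquard connection. On the open dense set where $|\nabla f|\neq 0$, these express $|\nabla f|^{4}T^{0}(X,Y)$ and $|\nabla f|^{4}U(X,Y)$ as universal polynomials in $df$, $g$ and $\omega_s$ whose only non-tensorial coefficient is the scalar $A:=U(\nabla f,\nabla f)$. Since $(\nabla_Z T^0)(X,Y)$ and $(\nabla_Z U)(X,Y)$ are tensorial, we may take $X,Y$ to be $\nabla$-parallel at the given point, so that the covariant derivatives collapse to ordinary differentiation of the right hand sides. All derivatives of the individual $df$-factors are then controlled by the Hessian identities \eqref{eq7}, \eqref{vvvv5}, \eqref{vvvv52}; in particular $Z(|\nabla f|^2)=2\nabla df(Z,\nabla f)=-2fdf(Z)-2\sum_s df(\xi_s) df(I_sZ)$, which is precisely the combination absorbed on the LHS of \eqref{vajno}. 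Thus the LHS of \eqref{vajno} is nothing but $Z(|\nabla f|^2 U(X,Y))$, and an analogous rewriting handles the weighting in \eqref{nablat0a}.

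The only ingredient not immediately supplied by the Hessian identities is the horizontal derivative of the scalar $A$, or equivalently of the ratio $\Phi:=A/|\nabla f|^2 = \tfrac{n-1}{4(n+1)}(S-2)$ obtained from \eqref{stu}. To compute $Z(\Phi)$ I would invoke the contracted Bianchi identity \eqref{d3n6}, which expresses $Z(S)$ as a linear combination of the divergences $(\nabla_{e_a}T^0)(e_a,Z)$, $(\nabla_{e_a}U)(e_a,Z)$ and the Ricci two-form $\rho_s(\xi_s,Z)$. These divergences can themselves be evaluated by tracing the explicit formulas \eqref{t000}, \eqref{u000} and substituting the Hessian identities; after simplification they collapse to a scalar expression in $f$, $df(Z)$, $\sum_s df(\xi_s)df(I_sZ)$, $|\nabla f|^2$ and $A$. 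Substituting back yields the closed form $Z(A) = -\tfrac{6}{n+2}\Phi f df(Z) - 4\Phi\sum_s df(\xi_s) df(I_sZ)$. The coefficients $\tfrac{4n+2}{n+2}$ in \eqref{nablat0a} and $\tfrac{2n-2}{n+2}$ in \eqref{vajno} are direct consequences of this formula together with the chain-rule identity $Z(A)=Z(\Phi)|\nabla f|^2+\Phi Z(|\nabla f|^2)$; they encode the elliptic equation of Lemma \ref{rrlem}, i.e.\ the normalization $k_0=4(n+2)$.

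Once $Z(A)$ is in hand, the remaining step is bookkeeping: each $Z$-derivative of a factor $df(X)$ or $df(I_s X)$ expands via \eqref{eq7} into terms involving $g(\cdot,Z)$ and $\omega_s(\cdot,Z)$, and these reassemble, via the quaternionic multiplication rules $I_i I_j = I_k$ for cyclic $(ijk)$, into the cyclic sums $\sum_{(ijk)}$ appearing on the right hand sides of both identities. Along the way the algebraic torsion relations of Lemma \ref{l:torsions}---principally \eqref{t0u}, \eqref{t0us}, \eqref{tss} and the invariant decompositions \eqref{propt}---are used to eliminate non-manifest $T^0(X,I_sY)$ and $U(I_sX,I_sY)$ contributions. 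The main anticipated obstacle is computational rather than conceptual, namely correctly isolating $Z(S)$ from \eqref{d3n6}, which requires careful projection of $\nabla T^0$ and $\nabla U$ onto each $Sp(1)$-invariant component. A convenient internal consistency check is to substitute $X=Y=\nabla f$ into both \eqref{nablat0a} and \eqref{vajno}: the resulting scalar equations must reduce, modulo \eqref{stu} and $U(\nabla f,\nabla f)=A$, to the closed-form expression for $Z(A)$ derived above, and this substitution can in fact be used in place of the full Bianchi computation as a shortcut to fix the otherwise-undetermined numerical coefficients.
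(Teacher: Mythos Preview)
Your plan is essentially the paper's own argument: differentiate \eqref{t000} and \eqref{u000}, feed in \eqref{eq7} for every $Z$-derivative of a $df$-factor, and close the system by determining $Z\big(U(\nabla f,\nabla f)/|\nabla f|^{2}\big)$ via the contracted second Bianchi identity. Two small corrections. First, the identity you need is the contracted Bianchi identity \eqref{conbi2}, not \eqref{d3n6}; the latter are formulas for the vertical Ricci two-forms and do not directly give $Z(S)$ in terms of the torsion divergences. The paper traces the differentiated \eqref{t0u}, \eqref{stu} to express $(\nabla_{e_a}T^0)(e_a,\nabla f)$, $(\nabla_{e_a}U)(e_a,\nabla f)$ and $\nabla f(S)$ in terms of $\nabla f(\Phi)$, substitutes into \eqref{conbi2}, and solves for $\nabla f(\Phi)$ and then for the general $Y(\Phi)$ (their \eqref{yu}); your closed form for $Z(A)$ is exactly what drops out of this. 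Second, the proposed ``shortcut'' of substituting $X=Y=\nabla f$ to fix the coefficients is circular: that specialization of the differentiated \eqref{t000} is just the derivative of the scalar relation $T^0(\nabla f,\nabla f)=-\tfrac{6n}{n-1}U(\nabla f,\nabla f)$, so the $Z(A)$ contributions cancel identically and no new information is obtained. You must go through Bianchi. With those adjustments the outline is correct and matches the paper.
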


\begin{proof}
The contracted Bianchi identity reads \cite{IMV,IV2}
\begin{equation}
(\nabla _{e_{a}}T^{0})(e_{a},X)+\frac{2n+4}{n-1}(\nabla
_{e_{a}}U)(e_{a},X)-(2n+1)dS(X)=0.  \label{conbi2}
\end{equation}%
After taking the trace in the covariant derivatives  of  \eqref{t0u} and \eqref{stu} we obtain
\begin{equation}
\begin{aligned} (\nabla_{e_a}T^0)(e_a,\nabla f)&=-\frac{6n}{n-1}\nabla
f\Big(\frac{U(\gr,\gr)}{|\gr|^2}\Big)+\frac{24n^2}{n-1}f\frac{U(\gr,\gr)}{|%
\gr|^2},\\
(\nabla_{e_a}U)(e_a,\nabla f)&=\nabla
f\Big(\frac{U(\gr,\gr)}{|\gr|^2}\Big)-4nf\frac{U(\gr,\gr)}{|\gr|^2},\\
\nabla f(S)&=\frac{4n+4}{n-1}\nabla f\Big(\frac{U(\gr,\gr)}{|\gr|^2}\Big).
\end{aligned}  \label{nablat0}
\end{equation}%
The system \eqref{nablat0} and \eqref{conbi2} imply
\begin{equation}
\nabla f\Big(\frac{U(\nabla f,\nabla f)}{|\nabla f|^{2}}\Big)=2\frac{n-1}{n+2%
}f\frac{U(\nabla f,\nabla f)}{|\nabla f|^{2}}.  \label{fu}
\end{equation}%
Similarly, using in addition \eqref{tss}, we have
\begin{equation}\label{nablat0s}
\begin{aligned} (\nabla_{e_a}T^0)(e_a,I_s\nabla f)& =\frac{2n}{n-1}I_s\nabla
f\Big(\frac{U(\gr,\gr)}{|\gr|^2}\Big)+\frac{8n^2}{n-1}df(\xi_s)\frac{U(\gr,%
\gr)}{|\gr|^2},\\
 (\nabla_{e_a}U)(e_a,I_s\nabla f)& =I_s\nabla
f\Big(\frac{U(\gr,\gr)}{|\gr|^2}\Big)+4ndf(\xi_s)\frac{U(\gr,\gr)}{|\gr|^2},
\\
I_s\nabla f(S)& = \frac{4n+4}{n-1}I_s\nabla
f\Big(\frac{U(\gr,\gr)}{|\gr|^2}\Big). \end{aligned}
\end{equation}%
Since the differentiation of \eqref{t0us} involves covariant derivatives of the almost
complex structures the derivation of \eqref{nablat0s} requires some care we do it explicitly {again, cf. Remark~\ref{rem00}}. We start with the proof of the first formula in \eqref{nablat0s}.
Differentiating the first equation in \eqref{t0us}, taking into account \eqref{xider}, we have%
\begin{eqnarray*}
&&\left( \nabla _{X}T^{0}\right) (Z,I_{i}\nabla f)-\alpha
_{j}(X)T^{0}(Z,I_{k}\nabla f)+\alpha _{k}(X)T^{0}(Z,I_{j}\nabla
f)+T^{0}(Z,I_{i}\nabla _{X}\left( \nabla f\right) ) \\
&=&-\frac{2n}{n-1}\left[ X\left( \frac{U(\nabla f,\nabla f)}{|\nabla f|^{2}}%
\right) df(I_{i}Z)+\frac{U(\nabla f,\nabla f)}{|\nabla f|^{2}}g\left( \nabla
_{X}\left( \nabla f\right) ,I_{i}Z\right) \right]  \\
&&-\frac{2n}{n-1}\frac{U(\nabla f,\nabla f)}{|\nabla f|^{2}}\left[ -\alpha
_{j}(X)df\left( I_{k}Z\right) +\alpha _{k}(X)df\left( I_{j}Z\right) \right].
\end{eqnarray*}
The  formula for the Hessian \eqref{eq7} gives
\begin{eqnarray*}
&&\left( \nabla _{X}T^{0}\right) (Z,I_{i}\nabla f)-\alpha
_{j}(X)T^{0}(Z,I_{k}\nabla f)+\alpha _{k}(X)T^{0}(Z,I_{j}\nabla
f)-fT^{0}(I_{i}X,Z)-\sum_{s=1}^{3}df\left( \xi _{s}\right)
T^{0}(I_{i}I_{s}X,Z) \\
&=&-\frac{2n}{n-1}\left[ X\left( \frac{U(\nabla f,\nabla f)}{|\nabla f|^{2}}%
\right) df(I_{i}Z)-\frac{U(\nabla f,\nabla f)}{|\nabla f|^{2}}\left(
fg\left( X,I_{i}Z\right) +\sum_{s=1}^{3}df\left( \xi _{s}\right) g\left(
I_{s}X,I_{i}Z\right) \right) \right]  \\
&&-\frac{2n}{n-1}\frac{U(\nabla f,\nabla f)}{|\nabla f|^{2}}\left[ -\alpha
_{j}(X)df\left( I_{k}Z\right) +\alpha _{k}(X)df\left( I_{j}Z\right) \right].
\end{eqnarray*}%
{Taking the trace in the above identity and then applying  the first equation in \eqref{t0us} to the obtained equality we see that the terms involving the connection 1-forms cancel, which gives the first identity in \eqref{nablat0s}.}

The second line in \eqref{nablat0s} follows similarly.

The system \eqref{nablat0s} and \eqref{conbi2} yields
\begin{equation}
I_{s}\nabla f\Big(\frac{U(\nabla f,\nabla f)}{|\nabla f|^{2}}\Big)=2df(\xi
_{s})\frac{U(\nabla f,\nabla f)}{|\nabla f|^{2}}.  \label{fus}
\end{equation}%
We calculate the divergence of $T^{0}$ differentiating \eqref{t000}, taking
the trace in the obtained equality and applying \eqref{fu}, \eqref{fus}.  After a short  computation we obtain
\begin{multline}
|\nabla f|^{2}(\nabla _{e_{a}}T^{0})(e_{a},Y)-2fT^{0}(Y,\nabla
f)+2\sum_{s=1}^{3}df(\xi _{s})T^{0}(Y,I_{s}\nabla f)=  \label{divt0} \\
-\frac{2n}{n-1}\left[ 3\nabla f\left( \frac{U(\nabla f,\nabla f)}{|\nabla
f|^{2}}\right) df(Y)+\sum_{s=1}^{3}I_{s}\nabla f\left( \frac{U(\nabla
f,\nabla f)}{|\nabla f|^{2}}\right) df(I_{s}Y)\right] \\
-\frac{2n}{n-1}\frac{U(\nabla f,\nabla f)}{|\nabla f|^{2}}\left[ 3\nabla
^{2}f(e_{a},e_{a})df(Y)-\sum_{s=1}^{3}\nabla
^{2}f(e_{a},I_{s}e_{a})df(I_{s}Y)\right] \\
-\frac{2n}{n-1}\frac{U(\nabla f,\nabla f)}{|\nabla f|^{2}}\left[ 3\nabla
^{2}f(\nabla f,Y)+\sum_{s=1}^{3}\nabla ^{2}f(I_{s}\nabla f,I_{s}Y)\right] .
\end{multline}%
Applying \eqref{eq7}, \eqref{t0u}, \eqref{t0us}, \eqref{fu} and \eqref{fus} to \eqref{divt0}, we get
\begin{multline}  \label{torto}
|\nabla f|^{2}(\nabla _{e_{a}}T^{0})(e_{a},Y)= \\
-\frac{12n}{n-1}f\frac{U(\nabla f,\nabla f)}{|\nabla f|^{2}}df(Y)+\frac{4n}{%
n-1}\frac{U(\nabla f,\nabla f)}{|\nabla f|^{2}}\sum_{s=1}^{3}df(\xi
_{s})df(I_{s}Y)-\frac{12n}{n+2}\frac{U(\nabla f,\nabla f)}{|\nabla f|^{2}}%
fdf(Y) \\
-\frac{4n}{n-1}\frac{U(\nabla f,\nabla f)}{|\nabla f|^{2}}%
\sum_{s=1}^{3}df(\xi _{s})df(I_{s}Y)+\frac{24n^{2}}{n-1}\frac{U(\nabla
f,\nabla f)}{|\nabla f|^{2}}fdf(Y)-\frac{8n^{2}}{n-1}\frac{U(\nabla f,\nabla
f)}{|\nabla f|^{2}}\sum_{s=1}^{3}df(\xi _{s})df(I_{s}Y) \\
+\frac{6n}{n-1}\frac{U(\nabla f,\nabla f)}{|\nabla f|^{2}}fdf(Y)-\frac{6n}{%
n-1}\frac{U(\nabla f,\nabla f)}{|\nabla f|^{2}}\sum_{s=1}^{3}df(\xi
_{s})df(I_{s}Y) \\
-\frac{2n}{n-1}\frac{U(\nabla f,\nabla f)}{|\nabla f|^{2}}\left[
\sum_{i=1}^{3}\left[ -fdf(Y)+df(\xi _{i})df(I_{i}Y)-df(\xi
_{j})df(I_{j}Y)-df(\xi _{k})df(I_{k}Y)\right] \right] \\
=\frac{12n\left( n+1\right) \left( 2n+1\right) }{\left( n+2\right) \left(
n-1\right) }\frac{U(\nabla f,\nabla f)}{|\nabla f|^{2}}fdf(Y)-4n\frac{2n+1}{%
n-1}\frac{U(\nabla f,\nabla f)}{|\nabla f|^{2}}\sum_{s=1}^{3}df(\xi
_{s})df(I_{s}Y).
\end{multline}%
Applying \eqref{t0u} and \eqref{t0us} to \eqref{torto}, we derive
\begin{equation}
|\nabla f|^{2}(\nabla _{e_{a}}T^{0})(e_{a},Y)=-\frac{(4n+2)(n+1)}{n+2}%
fT^{0}(Y,\nabla f)+(4n+2)\sum_{s=1}^{3}df(\xi _{s})T^{0}(Y,I_{s}\nabla f).
\label{divt00}
\end{equation}%
Now, we calculate the divergence of $U$ differentiating \eqref{u000}, taking
the trace in the obtained equality and applying \eqref{eq7}, \eqref{fu} and \eqref{fus}. We
have
\begin{multline}  \label{divu0}
|\nabla f|^2(\nabla_{e_a}U)(e_a,Y)-2fU(Y,\nabla
f)+2\sum_{s=1}^3df(\xi_s)U(Y,I_s\nabla f)= \\
-\frac1{n-1}Y\Big(\frac{U(\nabla f,\nabla f)}{|\nabla f|^2}\Big)|\nabla f|^2+%
\frac{n}{n-1}\Big[\nabla f\Big(\frac{U(\nabla f,\nabla f)}{|\nabla f|^2}\Big)%
df(Y)-\sum_{s=1}^3I_s\nabla f\Big(\frac{U(\nabla f,\nabla f)}{|\nabla f|^2}%
\Big)df(I_sY)\Big] \\
-\frac{1}{n-1}\frac{U(\nabla f,\nabla f)}{|\nabla f|^2}\Big[%
2\nabla^2f(Y,\nabla
f)-n\nabla^2f(e_a,e_a)df(Y)-n\sum_{s=1}^3\nabla^2f(e_a,I_se_a)df(I_sY)\Big]
\\
+\frac{n}{n-1}\frac{U(\nabla f,\nabla f)}{|\nabla f|^2}\Big[\nabla^2f(\nabla
f,Y)-\sum_{s=1}^3\nabla^2f(I_s\nabla f,I_sY)\Big] \\
=-\frac1{n-1}Y\Big(\frac{U(\nabla f,\nabla f)}{|\nabla f|^2}\Big)|\nabla
f|^2 +\frac{2n}{n-1}\Big[\frac{n-1}{n+2}fU(Y,\nabla
f)-\sum_{s=1}^3df(\xi_s)U(I_sY,\nabla f)\Big] \\
+\frac{2n}{n-1}fU(Y,\nabla f)+\frac{2n}{n-1}\sum_{s=1}^3df(\xi_s)
U(I_sY,\nabla f) -\frac{4n^2-2}{n-1}\Big[fU(Y,\nabla
f)+\sum_{s=1}^3df(\xi_s) U(I_sY,\nabla f)\Big].
\end{multline}
Thus, from \eqref{divu0} we obtain
\begin{multline}  \label{divu00}
|\nabla f|^2(\nabla_{e_a}U)(e_a,Y)=-\frac1{n-1}Y\Big(\frac{U(\nabla f,\nabla
f)}{|\nabla f|^2}\Big)|\nabla f|^2 \\
-\frac{4n^2+6n}{n+2}fU(Y,\nabla f)-\frac{4n^2-2n}{n-1}\sum_{s=1}^3df(%
\xi_s)U(I_sY,\nabla f).
\end{multline}
A substitution of \eqref{divt00}, \eqref{divu00} and
\begin{equation*}
|\nabla f|^2Y(S)=\frac{4n+4}{n-1}Y\Big(\frac{U(\nabla f,\nabla f)}{|\nabla
f|^2}\Big)|\nabla f|^2
\end{equation*}
in \eqref{conbi2} implies
\begin{equation}  \label{yu}
Y\Big(\frac{U(\nabla f,\nabla f)}{|\nabla f|^2}\Big)|\nabla f|^2=\frac{2n-2}{%
n+2}fU(Y,\nabla f)-2\sum_{s=1}^3df(\xi_s)U(I_sY,\nabla f).
\end{equation}
The equalities \eqref{divu00} and \eqref{yu} yield
\begin{equation}  \label{yu1}
|\nabla f|^2(\nabla_{e_a}U)(e_a,Y)= -\frac{2(n+1)(2n+1)}{n+2}fU(Y,\nabla f)-{2(2n+1)}\sum_{s=1}^3df(\xi_s)U(I_sY,\nabla f).
\end{equation}
We calculate from \eqref{t000} using \eqref{eq7}, \eqref{t0u} and \eqref{yu} that
\begin{multline*}
|\nabla
f|^2(\nabla_ZT^0)(X,Y)-2fdf(Z)T^0(X,Y)-2\sum_{s=1}^3df(\xi_s)df(I_sZ)T^0(X,Y)
\\
= \frac{2n-2}{n+2}fdf(Z)T^0(X,Y)-2\sum_{s=1}^3df(\xi_s)df(I_sZ)T^0(X,Y) \\
-\frac{2n}{n-1}\frac{U(\nabla f,\nabla f)}{|\nabla f|^2} f\Big[-3df(Y)g(X,Z)
+\sum_{s=1}^3df(I_sY)\omega_s(X,Z)\Big] \\
-\frac{2n}{n-1}\frac{U(\nabla f,\nabla f)}{|\nabla f|^2} f\Big[%
-3df(X)g(Y,Z)+\sum_{s=1}^3df(I_sX)\omega_s(Y,Z)\Big] \\
-\frac{2n}{n-1}\frac{U(\nabla f,\nabla f)}{|\nabla f|^2} df(\xi_1)\Big[%
3df(Y)\omega_1(X,Z)+df(I_1Y)g(X,Z)-df(I_2Y)\omega_3(X,Z)+df(I_3Y)%
\omega_2(X,Z)\Big] \\
-\frac{2n}{n-1}\frac{U(\nabla f,\nabla f)}{|\nabla f|^2} df(\xi_1)\Big[%
3df(X)\omega_1(Y,Z)+df(I_1X)g(Y,Z)-df(I_2X)\omega_3(Y,Z)+df(I_3X)%
\omega_2(Y,Z)\Big] \\
-\frac{2n}{n-1}\frac{U(\nabla f,\nabla f)}{|\nabla f|^2} df(\xi_2)\Big[%
3df(Y)\omega_2(X,Z)+df(I_2Y)g(X,Z)+df(I_1Y)\omega_3(X,Z)-df(I_3Y)%
\omega_1(X,Z)\Big] \\
-\frac{2n}{n-1}\frac{U(\nabla f,\nabla f)}{|\nabla f|^2} df(\xi_2)\Big[%
3df(X)\omega_2(Y,Z)+df(I_2X)g(Y,Z)+df(I_1X)\omega_3(Y,Z)-df(I_3X)%
\omega_1(Y,Z)\Big] \\
-\frac{2n}{n-1}\frac{U(\nabla f,\nabla f)}{|\nabla f|^2} df(\xi_3)\Big[%
3df(Y)\omega_3(X,Z)+df(I_3Y)g(X,Z)-df(I_1Y)\omega_2(X,Z)+df(I_2Y)%
\omega_1(X,Z)\Big] \\
-\frac{2n}{n-1}\frac{U(\nabla f,\nabla f)}{|\nabla f|^2} df(\xi_3)\Big[%
3df(X)\omega_3(Y,Z)+df(I_3X)g(Y,Z)-df(I_1X)\omega_2(Y,Z)+df(I_2X)%
\omega_1(Y,Z)\Big].
\end{multline*}
The last equality yields \eqref{nablat0a}. Finally, equation \eqref{vajno} follows from  \eqref{u000} using \eqref{eq7}, \eqref{t0u} and \eqref{yu}. \end{proof}

In the next, key step of the proof, where we show that the torsion tensor vanishes, we shall use the following particular cases of Lemma \ref{l:div torsion}. For $Z=\nabla f$,  \eqref{nablat0a} gives
\begin{multline}  \label{nablat00}
(\nabla_{\nabla f}T^0)(X,Y)=\frac{2n-2}{n+2}fT^0(X,Y)+df(\xi_1)\Big[%
T^0(I_1X,Y)+T^0(X,I_1Y)\Big] \\
+df(\xi_2)\Big[T^0(I_2X,Y)+T^0(X,I_2Y)\Big]+df(\xi_3)\Big[%
T^0(I_3X,Y)+T^0(X,I_3Y)\Big].
\end{multline}
 Similarly, letting $Z=I_{i}\nabla f$ in \eqref{nablat0a} we obtain
\begin{multline}  \label{nablat01}
(\nabla _{I_i\nabla f}T^0)(X,Y)=2df(\xi _i)T^0(X,Y)+f\Big[ %
T^{0}(I_{i}X,Y)+T^{0}(X,I_{i}Y)\Big] \\
-df(\xi _{j})\Big[T^{0}(I_{k}X,Y)+T^{0}(X,I_{k}Y)\Big]+df(\xi _{k})\Big[ %
T^{0}(I_{j}X,Y)+T^{0}(X,I_{j}Y)\Big].
\end{multline}
The substitution of  $Y=\gr$ in \eqref{divt00} taking into account\eqref{tu} and Lemma \ref{l:Lichn identities} gives
\begin{equation}\label{e:divt00}
|\nabla f|^{2}(\nabla _{e_{a}}T^{0})(e_{a},\gr)=\frac{12n(n+1)(2n+1)}{(n+2)(n-1)}%
fU(\gr,\nabla f)
\end{equation}%
while the substitution $Z=e_a, X=I_ie_a,Y=I_i\nabla f$ in \eqref{nablat0a} and
\eqref{t0us} gives
\begin{equation}  \label{nooo}
|\nabla f|^{2}(\nabla _{e_{a}}T^{0})(I_{i}e_{a},I_{i}\nabla f)=- \frac{4n\left(
n+1\right)(2n+1)}{\left( n+2\right) \left( n-1\right) }fU(\nabla
f,\nabla f).
\end{equation}
Finally, letting $Z=\nabla f,I_{s}\nabla f$ in \eqref{vajno} shows  the next
equality
\begin{equation}
(\nabla _{\nabla f}U)(X,Y)=\frac{2n-2}{n+2}fU(X,Y),\qquad (\nabla
_{I_{s}\nabla f}U)(X,Y)=2df(\xi _{s})U(X,Y).  \label{nablau0}
\end{equation}

\subsection{Vanishing of the torsion }\label{ss:qc-einstein}

In this section we show the vanishing of the torsion, $T^0=U=0$, by
calculating in two ways the mixed third covariant derivatives of a function satisfying \eqref{eq7}.
\begin{lemma}\label{l:qc-Einstein}
If $M$ satisfies the assumptions of Theorem \ref{main2}, then the torsion tensor vanishes, $T^0=0$, $U=0$, i.e., $M$ is a qc-Einstein manifold.
\end{lemma}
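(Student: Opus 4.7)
The plan is to reduce everything to showing that the single scalar quantity $U(\nabla f,\nabla f)$ vanishes identically. Once this is established, the formulas \eqref{t000} and \eqref{u000} of Lemma~\ref{l:torsions} immediately give $T^0=0$ and $U=0$ on the open set where $|\nabla f|\ne 0$; by Remark~\ref{eell} this set is dense in $M$, so both torsion components vanish everywhere by continuity, making $M$ qc-Einstein.

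To prove $U(\nabla f,\nabla f)=0$, the strategy is to compare two expressions for the mixed third covariant derivatives of $f$ in which a Reeb direction appears. On one hand, Lemma~\ref{l:nab3xi} applied with $A=\xi_s$ yields a clean formula for $\nabla^3 f(\xi_s,X,Y)$ whose only non-horizontal input is the vertical Hessian $\nabla^2 f(\xi_s,\xi_t)$ computed in Lemma~\ref{l:D2 vert}. On the other hand, differentiating the mixed-Hessian identities \eqref{vvvv5} and \eqref{vvvv52} along a horizontal vector $Z$ produces $\nabla^3 f(Z,\xi_s,X)$ and $\nabla^3 f(Z,X,\xi_s)$ as combinations of the covariant derivatives of $T^0$ and $U$. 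By Lemma~\ref{l:div torsion}, together with the trace-level consequences \eqref{nablat00}, \eqref{nablat01}, \eqref{e:divt00}, \eqref{nooo} and \eqref{nablau0}, every such covariant derivative reduces to a universal combination of $f$, $df(\xi_s)$, gradient contractions and the scalar $U(\nabla f,\nabla f)/|\nabla f|^2$.

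The two computations are forced to agree by the third-order Ricci commutator identities \eqref{boh2}, specifically
\[
\nabla^3 f(\xi_s,X,Y)-\nabla^3 f(X,\xi_s,Y)=-R(\xi_s,X,Y,\nabla f)-\nabla^2 f(T(\xi_s,X),Y),
\]
with the curvature term explicitly given by the qc-curvature identity \eqref{d3n5} and the torsion evaluation \eqref{need1}. Taking suitable traces of the resulting equality (for instance along $X=e_a$, $Y=I_s e_a$, or setting $Y=\nabla f$ and then $Y=I_s\nabla f$) and invoking the differential identities \eqref{fu} and \eqref{fus} for $U(\nabla f,\nabla f)/|\nabla f|^2$ together with the contracted Bianchi identity \eqref{conbi2}, yields scalar relations that force $f\cdot U(\nabla f,\nabla f)$ and $df(\xi_s)\cdot U(\nabla f,\nabla f)$ to vanish. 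Combining these with \eqref{fu}--\eqref{fus}, viewed as a first-order system for $U(\nabla f,\nabla f)/|\nabla f|^2$ along $\nabla f$ and $I_s\nabla f$, one concludes $U(\nabla f,\nabla f)\equiv 0$ on the dense set $\{|\nabla f|\ne 0\}$, hence everywhere.

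The main obstacle is purely computational bookkeeping: one must choose the trace carefully so that after all cancellations only the scalar $U(\nabla f,\nabla f)$ (rather than an undetermined tensor) remains, with a nonzero coefficient in the range $n>1$ where the formulas of Lemma~\ref{l:torsions} and Lemma~\ref{l:div torsion} are valid. The seven-dimensional case $n=1$ is excluded precisely because several denominators $(n-1)$ in \eqref{t000}--\eqref{u000} and in \eqref{vajno} degenerate, which is consistent with the hypothesis of Theorem~\ref{main2}.
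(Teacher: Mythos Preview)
Your strategy is correct and matches the paper's: compute the quantity $\nabla^3f(\xi_i,X,Y)-\nabla^3f(X,Y,\xi_i)$ in two independent ways and compare, reducing everything to $U(\nabla f,\nabla f)=0$, after which \eqref{t000}--\eqref{u000} and Remark~\ref{eell} finish the argument.

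Two points where the paper is sharper than your outline. First, rather than taking traces, the paper evaluates directly at the specific pair $X=I_i\nabla f$, $Y=\nabla f$. One side, coming from the sixth Ricci identity in \eqref{boh2} together with the curvature formula \eqref{d3n5} and \eqref{rhoxijk}, collapses to zero after applying \eqref{nablat00}, \eqref{nablat01}, \eqref{vvv1}, \eqref{t02}. The other side, obtained by subtracting \eqref{bixi1} from \eqref{nab3xi} with $A=\xi_i$ and then inserting \eqref{nablat01}, \eqref{nablau0}, \eqref{tu} and the vertical-Hessian formula \eqref{zero3}, yields exactly $\frac{2}{n+2}\,f\,U(\nabla f,\nabla f)$. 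The coefficient is manifestly nonzero for all $n>1$, so no delicate trace choice is needed. Second, your final step is more elaborate than necessary: once $f\,U(\nabla f,\nabla f)=0$, Remark~\ref{eell} alone (density of $\{f\neq 0\}\cap\{|\nabla f|\neq 0\}$ from ellipticity) gives $U(\nabla f,\nabla f)\equiv 0$; there is no need to also derive $df(\xi_s)\,U(\nabla f,\nabla f)=0$ or to invoke \eqref{fu}--\eqref{fus} as an ODE system.
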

The proof occupies the remaining part of this sub-section.
\subsubsection{The Ricci identities}
We are going to use the sixth line in \eqref{boh2}. A substitution of the contracted
Bianchi identity \eqref{conbi2} in the second formula of \eqref{d3n6}
gives
\begin{multline}
(2n+1)\rho _{i}(\xi _{j},I_{k}X)=-(2n+1)\rho_{i}(\xi_{k},I_{j}X) \\
=-\frac{1}{4}[(\nabla _{e_{a}}T^{0})(e_{a},X)+(\nabla
_{e_{a}}T^{0})(I_{i}e_{a},I_{i}X)]+\frac{n}{n-1}(\nabla _{e_{a}}U)(e_{a},X).
\label{rhoxijk}
\end{multline}
Let $Z=\nabla f$ in \eqref{d3n5}, and then substitute the obtained equality in the
sixth formula of \eqref{boh2}, after which  use \eqref{rhoxijk} in order to see
\begin{multline}  \label{RicciSpecial1}
\nabla^3f(\xi_i,X,Y)-\nabla^3f(X,Y,\xi_i) \\
=-\nabla^2f(T(\xi_i,X),Y)-\nabla^2f(X,T(\xi_i,Y))-df((\nabla_XT)(\xi_i,Y))+(%
\nabla_XU)(I_iY,\nabla f) \\
+\frac{1}{4}\left[(\nabla_YT^0)(I_i\nabla f,X)+(\nabla_YT^0)(\nabla f,I_iX)%
\right]-\frac{1}{4}\left[(\nabla_{\nabla f}T^0)(I_iY,X)+(\nabla_{\nabla
f}T^0)(Y,I_iX)\right] \\
+\frac{1}{2n+1}\left[-\frac{1}{4}(\nabla_{e_a}T^0)\left[(e_a,I_k\nabla
f)-(I_ke_a,\nabla f)\right]+\frac{n}{n-1}(\nabla_{e_a}U)(e_a,I_k\nabla f)%
\right]\omega_j(X,Y) \\
-\frac{1}{2n+1}\left[-\frac{1}{4}(\nabla_{e_a}T^0)\left[(e_a,I_j\nabla
f)-(I_je_a,\nabla f)\right]+\frac{n}{n-1}(\nabla_{e_a}U)(e_a,I_j\nabla f)%
\right]\omega_k(X,Y) \\
+\frac{1}{2n+1}\left[\frac{1}{4}(\nabla_{e_a}T^0)\left((e_a,I_kY)-(I_ke_a,Y)%
\right)-\frac{n}{n-1}(\nabla_{e_a}U)(e_a,I_kY)\right]df(I_jX) \\
-\frac{1}{2n+1}\left[\frac{1}{4}(\nabla_{e_a}T^0)\left((e_a,I_jY)-(I_je_a,Y)%
\right)-\frac{n}{n-1}(\nabla_{e_a}U)(e_a,I_jY)\right]df(I_kX) \\
+\frac{1}{2n+1}\left[\frac{1}{4}(\nabla_{e_a}T^0)\left((e_a,I_kX)-(I_ke_a,X)%
\right)-\frac{n}{n-1}(\nabla_{e_a}U)(e_a,I_kX)\right]df(I_jY) \\
-\frac{1}{2n+1}\left[\frac{1}{4}(\nabla_{e_a}T^0)\left((e_a,I_jX)-(I_je_a,X)%
\right)-\frac{n}{n-1}(\nabla_{e_a}U)(e_a,I_jX)\right]df(I_kY). \\
\end{multline}
Note that from \eqref{need1} we have
\begin{equation}  \label{need2}
T(\xi_i,X,Y)=-\frac{1}{4}\left[T^0(I_iX,Y)+T^0(X,I_iY)\right]-U(X,I_iY).
\end{equation}
Differentiating the above formula we find, applying \eqref{xider},
\begin{equation}  \label{dfnabla}
df\left( \left( \nabla _{X}T\right) \left( \xi _{i},Y\right) \right) =-\frac{%
1}{4}\left[ (\nabla _{X}T^{0})(I_{i}Y,\nabla f)+(\nabla
_{X}T^{0})(Y,I_{i}\nabla f)\right] +(\nabla _{X}U)(I_{i}Y,\nabla f).
\end{equation}
Using \eqref{eq7}, \eqref{need2}, \eqref{dfnabla} and the properties of
torsion tensor listed in \eqref{propt}, we obtain from \eqref{RicciSpecial1}
\begin{multline}
\nabla ^{3}f(\xi _{i},X,Y)-\nabla ^{3}f(X,Y,\xi _{i})  \label{e:D3Ricci} \\
=\frac{1}{4}\left[ (\nabla _{X}T^{0})(I_{i}Y,\nabla f)+(\nabla
_{X}T^{0})(Y,I_{i}\nabla f)\right] +\frac{1}{4}\left[ (\nabla
_{Y}T^{0})(I_{i}X,\nabla f)+(\nabla _{Y}T^{0})(X,I_{i}\nabla f)\right] \\
-\frac{1}{4}\left[ (\nabla _{\nabla f}T^{0})(X,I_{i}Y)+(\nabla _{\nabla
f}T^{0})(I_{i}X,Y)\right] -\frac{1}{2}\left[ T^{0}(I_{i}X,Y)+T^{0}(X,I_{i}Y)%
\right] f \\
+\left[ \frac{1}{2}\left( T^{0}(X,I_{k}Y)-T^{0}(I_{k}X,Y)\right)
+2U(X,I_{k}Y)\right] df(\xi _{j}) \\
+\left[ \frac{1}{2}\left( T^{0}(I_{j}X,Y)-T^{0}(X,I_{j}Y)\right)
+2U(I_{j}X,Y)\right] df(\xi _{k}) \\
+\frac{1}{2n+1}\left[ -\frac{1}{4}(\nabla _{e_{a}}T^{0})\left[
(e_{a},I_{k}\nabla f)-(I_{k}e_{a},\nabla f)\right] +\frac{n}{n-1}(\nabla
_{e_{a}}U)(e_{a},I_{k}\nabla f)\right] \omega _{j}(X,Y) \\
-\frac{1}{2n+1}\left[ -\frac{1}{4}(\nabla _{e_{a}}T^{0})\left[
(e_{a},I_{j}\nabla f)-(I_{j}e_{a},\nabla f)\right] +\frac{n}{n-1}(\nabla
_{e_{a}}U)(e_{a},I_{j}\nabla f)\right] \omega _{k}(X,Y) \\
+\frac{1}{2n+1}\left[ \frac{1}{4}(\nabla _{e_{a}}T^{0})\left(
(e_{a},I_{k}Y)-(I_{k}e_{a},Y)\right) -\frac{n}{n-1}(\nabla
_{e_{a}}U)(e_{a},I_{k}Y)\right] df(I_{j}X) \\
-\frac{1}{2n+1}\left[ \frac{1}{4}(\nabla _{e_{a}}T^{0})\left(
(e_{a},I_{j}Y)-(I_{j}e_{a},Y)\right) -\frac{n}{n-1}(\nabla
_{e_{a}}U)(e_{a},I_{j}Y)\right] df(I_{k}X) \\
+\frac{1}{2n+1}\left[ \frac{1}{4}(\nabla _{e_{a}}T^{0})\left(
(e_{a},I_{k}X)-(I_{k}e_{a},X)\right) -\frac{n}{n-1}(\nabla
_{e_{a}}U)(e_{a},I_{k}X)\right] df(I_{j}Y) \\
-\frac{1}{2n+1}\left[ \frac{1}{4}(\nabla _{e_{a}}T^{0})\left(
(e_{a},I_{j}X)-(I_{j}e_{a},X)\right) -\frac{n}{n-1}(\nabla
_{e_{a}}U)(e_{a},I_{j}X)\right] df(I_{k}Y). \\
\end{multline}
For $X=I_{i}\nabla f$, $Y=\nabla f$ equation \eqref{e:D3Ricci} together with %
\eqref{nablat00}, \eqref{nablat01}, \eqref{vvv1} and \eqref{t02} imply
\begin{equation}
\nabla ^{3}f(\xi_i,I_i\nabla f,\nabla f)-\nabla ^{3}f(I_{i}\nabla f,\nabla f,\xi _{i})=0.  \label{zero0}
\end{equation}

On the other hand,  a subtraction of \eqref{bixi1} from \eqref{nab3xi} { with  $A=\xi_i$} gives
\begin{multline}  \label{dixiii}
\nabla ^{3}f(\xi _{i},X,Y)-\nabla ^{3}f(X,Y,\xi _{i}) =\frac{n+1}{2n+1}\Big[(\nabla_XT^0)(I_iY,\nabla f)+(\nabla_XT^0)(Y,I_i\nabla f)\Big]\\
+\frac{4n}{(2n+1)(n-1)}(\nabla_XU)(I_iY,\nabla f) -f\frac{n+1}{2n+1}\Big[T^0(X,I_iY)+T^0(I_iX,Y)\Big]+\frac{4n}{%
(2n+1)(n-1)}fU(X,I_iY) \\
-df(\xi_i)\frac{n+1}{2n+1}\Big[T^0(I_iX,I_iY)-T^0(X,Y)\Big]+\frac{4n}{%
(2n+1)(n-1)}df(\xi_i)U(X,Y)\\
-df(\xi_j)\frac{n+1}{2n+1}\Big[T^0(I_jX,I_iY)+T^0(I_kX,Y)\Big]
+\frac{4n%
}{(2n+1)(n-1)}df(\xi_j)U(X,I_kY) \\
-df(\xi_k)\frac{n+1}{2n+1}\Big[T^0(I_kX,I_iY)-T^0(I_jX,Y)\Big]-\frac{4n%
}{(2n+1)(n-1)}df(\xi_k)U(X,I_jY)\\
-\sum_{s=1}^3\left[ \nabla ^{2}f(\xi _{i},\xi _{s})+f\right] \omega _{s}(X,Y).
\end{multline}
Letting $X=I_i\nabla f, Y=\nabla f$ in \eqref{dixiii} and then applying %
\eqref{vvv1} and \eqref{t02} we obtain
\begin{multline}  \label{zero1}
\nabla ^{3}f(\xi _{i},I_i\gr ,\gr)-\nabla ^{3}f(I_i\gr,\gr,\xi _{i}) =\frac{2(n+1)}{2n+1}(\nabla_{I_i\nabla f}T^0)(I_i\nabla f,\nabla f)\\
+ \frac{4n}{(2n+1)(n-1)}(\nabla_{I_i\nabla f}U)(I_i\nabla f,\nabla f) -\frac{n+1}{2n+1}f\Big[T^0(I_i\nabla f,I_i\nabla f)-T^0(\nabla f,\nabla f)\Big]\\
+\frac{4n}{(2n+1)(n-1)}fU(\nabla f,\nabla f)+\left[ \nabla^{2}f(\xi _{i},\xi _{i})+f\right] |\nabla f|^2.
\end{multline}
Using \eqref{nablat01}, \eqref{nablau0} as well as \eqref{tu} in %
\eqref{zero1} we conclude
\begin{equation}  \label{zero2}
\nabla ^{3}f(\xi _{i},I_i\gr ,\gr)-\nabla ^{3}f(I_i\gr,\gr,\xi _{i}) = \frac{4n}{n-1}fU(\nabla f,\nabla f)+\left[
\nabla ^{2}f(\xi _{i},\xi _{i})+f\right] |\nabla f|^2.
\end{equation}
The formula for the last term is given in \eqref{xixi} to whose right-hand side  we apply \eqref{e:divt00},
\eqref{nooo} and  \eqref{yu1} in order to obtain
\begin{equation}  \label{zero3}
\nabla ^{2}f(\xi _{i},\xi _{i})+f=-\frac{2\left( n+1\right) \left(
2n+1\right) }{\left( n+2\right) \left( n-1\right) }f\frac{U(\nabla f,\nabla
f)}{|\nabla f|^{2}}
\end{equation}
Now \eqref{zero3} applied to \eqref{zero2} allows us to conclude
\begin{equation}  \label{zero4}
\nabla ^{3}f(\xi _{i},I_i\gr ,\gr)-\nabla ^{3}f(I_i\gr,\gr,\xi _{i})= \frac{2}{n+2}fU(\nabla f,\nabla f).
\end{equation}
Comparing \eqref{zero0} and \eqref{zero4} we obtain $fU(\nabla f,\nabla f)=0$,
which implies $U(\nabla f,\nabla f)=0$ taking into account  Remark~\ref{eell}. Hence, $%
T^0=U=0 $ due to \eqref{t000} and \eqref{u000}. This completes the proof of Lemma \ref{l:qc-Einstein}.

\subsection{The Riemannian Hessian.}

Here we show that if $T^0=U=0$ the equality \eqref{eq7} implies that the
Riemannian Hessian satisfies \eqref{clasob} and therefore the
manifold is the standard sphere due to the Obata's theorem.

\begin{lemma}\label{l:Riem hessian}
Let $(M,\eta,g,\mathbb{Q})$ be a qc-Einstein manifold, $T^0=U=0$, of dimension $4n+3>7$. Let $h$ be the associated Riemannian metric \eqref{riem1}.
If  $f$  is   smooth function whose horizontal Hessian satisfies \eqref{eq7}, then the Riemannian Hessian of $f$ with respect to the metric $h$ satisfies \eqref{clasob}.
\end{lemma}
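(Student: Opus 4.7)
I plan to relate the Biquard Hessian $\nabla^2 f$ to the Riemannian Hessian $(\nabla^h)^2 f$ via the contorsion formula \eqref{lcbi}. Defining $\Delta(A,B) := \nabla_A B - \nabla^h_A B$, that identity reads $h(\Delta(A,B), C) = \tfrac12\bigl[h(T(A,B),C) - h(T(B,C),A) + h(T(C,A),B)\bigr]$, so that $(\nabla^h)^2 f(A,B) = \nabla^2 f(A,B) + df(\Delta(A,B))$. Under the qc-Einstein hypothesis $T^0 = U = 0$ the torsion of $\nabla$ becomes very simple, and the strategy is to verify $(\nabla^h)^2 f(A,B) = -f\,h(A,B)$ by treating four cases: $A, B$ both horizontal, one horizontal and one vertical, $A = B = \xi_s$, and $A = \xi_i, B = \xi_j$ with $i \neq j$.

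\textbf{Preliminaries.} We may assume $f \not\equiv 0$ since \eqref{eq7} forces a nonzero constant $f$ to violate the identity (taking $X = Y$ of unit length). From Lemma~\ref{rrlem} with $T^0 = U = 0$, $f$ satisfies the elliptic equation $\triangle^h f = (4n+3)f$; Aronszajn's unique continuation then guarantees $|\nabla f| \neq 0$ on a dense set, and \eqref{stu} with $U = 0$ yields $S \equiv 2$ on $M$. With $T^0 = U = 0$, $S = 2$ and $V$ integrable (valid for $n > 1$ by the qc-Einstein structural result recalled in Section~2 after \eqref{sixtyfour}), the torsion of $\nabla$ reduces to $T(X,Y) = 2\sum_s \omega_s(X,Y)\xi_s$ on $H \times H$, $T(\xi_s, X) = 0$ for $X \in H$, and $T(\xi_i,\xi_j) = -2\xi_k$ for $(i,j,k)$ cyclic.

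\textbf{Computing $\Delta$ and disposing of three cases.} A direct case-by-case calculation from the displayed contorsion formula gives $\Delta(X,Y) = \sum_s \omega_s(X,Y)\xi_s$ for $X, Y \in H$; $\Delta(X,\xi_s) = \Delta(\xi_s, X) = -I_s X$ for $X \in H$; $\Delta(\xi_s,\xi_s) = 0$; and $\Delta(\xi_i, \xi_j) = -\xi_k$ for $(i,j,k)$ cyclic. On $H \times H$, \eqref{eq7} and $df(\Delta(X,Y)) = \sum_s df(\xi_s)\omega_s(X,Y)$ cancel to give $(\nabla^h)^2 f(X,Y) = -fg(X,Y) = -fh(X,Y)$. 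In the mixed case, specializing \eqref{vvvv5} and \eqref{vvvv52} to $T^0 = U = 0$ gives $\nabla^2 f(X,\xi_s) = \nabla^2 f(\xi_s, X) = df(I_s X)$, which exactly cancels the $df(\Delta) = -df(I_s X)$ contribution. For $A = B = \xi_s$, equation \eqref{xixi} with $T^0 = U = 0$ yields $\nabla^2 f(\xi_s, \xi_s) = -f$ while $df(\Delta) = 0$.

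\textbf{Main step and obstacle.} The remaining case $(\nabla^h)^2 f(\xi_i, \xi_j)$ with $i \neq j$ requires the value of $\nabla^2 f(\xi_i, \xi_j)$, which is not directly recorded in earlier lemmas. Specializing \eqref{bixi1} to $T^0 = U = 0$ gives the compact identity
\[
\nabla^3 f(X, Y, \xi_i) = f\omega_i(X,Y) - df(\xi_i)\,g(X, Y) + df(\xi_j)\,\omega_k(X,Y) - df(\xi_k)\,\omega_j(X, Y).
\]
Antisymmetrizing in $(X,Y)$ and matching against the Ricci identity on the fourth line of \eqref{boh2}, using $\rho_s = -2\omega_s$ on $H$ (which follows from \eqref{sixtyfour} with $T^0 = U = 0$ and $S = 2$), the coefficients of $\omega_i, \omega_j, \omega_k$ produce $\nabla^2 f(\xi_i,\xi_i) = -f$ (reconfirming the diagonal case) and $\nabla^2 f(\xi_j, \xi_i) = -df(\xi_k)$ for $(i,j,k)$ cyclic. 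Finally the torsion Ricci identity $\nabla^2 f(A,B) - \nabla^2 f(B,A) = -df(T(A,B))$ applied with $(A,B) = (\xi_i, \xi_j)$ and $T(\xi_i,\xi_j) = -2\xi_k$ gives $\nabla^2 f(\xi_i, \xi_j) = -df(\xi_k) + 2df(\xi_k) = df(\xi_k)$, which cancels $df(\Delta(\xi_i,\xi_j)) = -df(\xi_k)$. Hence $(\nabla^h)^2 f(\xi_i, \xi_j) = 0 = -fh(\xi_i, \xi_j)$, completing the verification of \eqref{clasob}. The main obstacle is this last case, since all other components follow by routine substitution; the delicate point is to correctly assemble the $\nabla^3 f$ Ricci identity with the asymmetry of $\nabla^2 f$ induced by the nonzero vertical torsion $T(\xi_i,\xi_j)$.
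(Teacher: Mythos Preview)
Your proof is correct and follows essentially the same route as the paper: both use the contorsion formula \eqref{lcbi} to relate the two Hessians and then verify \eqref{clasob} case by case on $H\times H$, $H\times V$, and $V\times V$, invoking \eqref{eq7}, \eqref{vvvv52}, \eqref{xixi}, and the Ricci identity \eqref{ricxi}/\eqref{bixi1} together with $S=2$. The only cosmetic difference is that you first compute the contorsion $\Delta$ explicitly and read off $\nabla^2 f(\xi_j,\xi_i)$ by matching the coefficients of $\omega_i,\omega_j,\omega_k$ in the antisymmetrized \eqref{bixi1}, whereas the paper obtains the same value $\nabla^2 f(\xi_j,\xi_i)=(1-S)df(\xi_k)$ by taking the trace $X=e_a,\ Y=I_je_a$ in \eqref{ricxi}; the subsequent use of the vertical torsion $T(\xi_i,\xi_j)=-S\xi_k$ to finish the $(\xi_i,\xi_j)$ case is the same in both arguments.
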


\begin{proof}
Taking into account \eqref{lcbi} we have the following formula relating the Hessian with respect to the Levi-Civita and the  Biquard connections
\begin{equation}  \label{hes11}
(\nabla^h)^2f(A,B)=\nabla^2f(A,B)+\frac12\Big[%
h(T(A,B),df)-h(T(B,df),A)+h(T(df,A),B)\Big], A,B\in\Gamma(TM).
\end{equation}
From \eqref{hes11}, \eqref{torha} and \eqref{eq7} it follows that
\begin{equation}  \label{hes12}
(\nabla^h)^2f(X,Y)=-fh(X,Y).
\end{equation}
Let us recall that  a qc-Einstein manifold, $T^0=U=0$,  has integrable vertical space \cite{IMV} thus the fourth line in \eqref{sixtyfour} shows
\begin{equation}  \label{ijk}
h(T(\xi_s,\xi_t),X)=0,
\end{equation}
Now, using  \eqref{vvvv52} with $T^0=U=0$, we calculate from %
\eqref{hes11}
\begin{multline}  \label{hes13}
(\nabla^h)^2f(X,\xi_i)=df(I_iX)+\frac12h(T(X,\xi_i),\nabla
f)-\frac12h(T(\xi_i,\nabla f),X) -\frac12h(T(\xi_i,\sum_{s=1}^3df(\xi_s)\xi_s,X)\\
+\frac12h(T(\nabla
f,X),\xi_i)+\frac12h(T(\sum_{s=1}^3df(\xi_s)\xi_s,X),\xi_i)
=df(I_iX)+\omega_i(\nabla f,X)=0,
\end{multline}
{taking into account \eqref{ijk} and the properties of the torsion \eqref{need1} and \eqref{torha}.} A similar argument shows the identity
\begin{equation}  \label{hes14}
(\nabla^h)^2f(\xi_i,\xi_i)=\nabla^2f(\xi_i,\xi_i)=-f,
\end{equation}
where we have used \eqref{xixi} taken with $T^0=U=0$.

Finally, we have to show $(\nabla^h)^2f(\xi_i,\xi_j)=0$. The trace with
respect to $X=e_a,Y=I_je_a$ in \eqref{ricxi} together with the second
equality in \eqref{sixtyfour} and the condition $T^0=U=0$ yields
\begin{equation}  \label{hij}
\nabla^2f(\xi_j,\xi_i)=(1-S)df(\xi_k).
\end{equation}
Now, the equality \eqref{hes11} together with the fourth equality in %
\eqref{sixtyfour}, \eqref{ijk} and \eqref{hij} imply
\begin{equation}  \label{hes15}
(\nabla^h)^2f(\xi_i,\xi_j)=(1-S)df(\xi_k)+\frac12Sdf(\xi_k)=(1-\frac12S)df(%
\xi_k)=0,
\end{equation}
since \eqref{vvvv4} shows $S=2$ in the case $T^0=U=0$.
\end{proof}

{At this point, applying the Obata theorem we conclude that our manifold is isometric to the unit sphere. In order to show that it is qc-equivalent to the sphere we shall use a Liouville-type result in the quaternionic contact case which we present next.}

\subsection{Quaternionic contact conformally flat manifolds and the Liouville theorem}\label{ss:qc conf flat}
We start by recalling some basic definitions and facts.

\begin{dfn}[{\cite[Section 7.2]{IMV}}]\label{d:3-ctct auto}
A diffeomorphism $F$ of a qc manifold $(M,\eta)$ is
called a conformal quaternionic contact automorphism
(abbr. conformal qc-automorphism or qc-conformal map) if $F$ preserves the qc structure,
i.e. $F^*\eta=\mu\Psi\cdot\eta,
$ for some positive smooth function $\mu$ and matrix $\Psi\in
SO(3)$ with smooth functions as entries, where
$\eta=(\eta_1,\eta_2,\eta_3)^t$ is a local 1-form considered as an
element of $\mathbb R^3$. If $\mu=const$ we call $F$ a quaternionic contact homothety (abbr.  qc-homothety). Finally, if $\mu=1$ then $F$ is called a quaternionic contact automorphism (abbr.  qc-automorphism).
\end{dfn}
The above definition extends in the obvious manner to a map between two qc manifolds  giving the notion of a qc-conformal map.
The qc-conformal curvature tensor $W^{qc}$, introduced in \cite{IV}, is the obstruction for a qc structure to be locally qc-conformal to the flat structure on the quaternionic Heisenberg group $\boldsymbol{G\,(\mathbb{H})}$. A qc-conformally flat structure is also locally qc-conformal to the
standard 3-Sasaki sphere due to the local qc-conformal equivalence of the
standard 3-Sasakian structure on the $(4n+3)$-dimensional sphere and the
quaternionic Heisenberg group \cite{IMV,IV} via the Cayley transform whose definition we recall below, see also \cite[Section 2.3.1]{IV2} for the history and references of the Cayley transform on groups of Heisenberg type.
In view of the uniqueness of the possible associated almost complex structures \cite{Biq1}, (see also  \cite[Lemma 2.2]{IMV}), a conformal quaternionic contact automorphism will also preserve, unlike the situation in the CR case,  the associated  almost complex structures and  the conformal class $[g]$ of the horizontal metric $g$  on $H$.

We shall use the following model of the  quaternionic Heisenberg group $\QH$, see \cite[Section 5.2]{IMV} or \cite[Section 4.3.4]{IV2}. Define
$\QH\ =\ \Hn\times\text {Im}\, \mathbb{H}$ with the group law given by
\begin{equation}\label{e:Heise multipl}
 (q_o, \omega_o)\circ(q, \omega)\ =\ (q_o\ +\ q, \omega\ +\ \omega_o\ +
\ 2\ \text {Im}\  q_o\, \bar q),
\end{equation}
where $q,\ q_o\in\Hn$, $\omega, \omega_o\in \text {Im}\, \mathbb{H}$ and  $\mathbb{H}$ is  the space of quaternions.  We shall denote with $(q, \omega)$ the elements of $\QH$. The standard 3-contact form
$\tilde\Theta=(\tilde\Theta_1,\ \tilde\Theta_2, \ \tilde\Theta_3)$ written as a purely imaginary quaternion valued 1-form is defined by
\begin{equation}\label{e:Heisenberg ctct forms}
\tilde\Theta\  =\ \frac 12\left (
d\omega \ - \ q \cdot d\bar q \ + \ dq\, \cdot\bar q\right ).
\end{equation}

Let ${S^{4n+3}}\ =\ \{\abs{q'}^2+\abs{p'}^2=1 \}\subset \Hn\times\mathbb{H}$ be the unit sphere in the $(n+1)$-dimensional quaternion space  equipped with the qc structure defined by   the standard contact form $\tilde\eta$ on the sphere,
\begin{equation}\label{e:stand cont form on S}
\tilde\eta\ =\ dq'\cdot \bar q'\ +\ dp'\cdot \bar p'\ -\ q'\cdot d\bar q' -\ p'\cdot d\bar p'.
\end{equation}
  Note that $\tilde\eta$ is twice the standard 3-Sasakain form on the sphere.  Following  \cite{IMV}, see also \cite{IV2},  we identify the quaternionic Heiseneberg group $\QH$  with the boundary  of a Siegel domain in $\Hn\times\mathbb{H}$,
\[
\QH\ =\ \{ (q,p)\in \Hn\times\mathbb{H}\ :\ \Re {\ p}\ =\ \abs{q}^2 \},
\]
 using the map $(q, \omega)\mapsto (q,\abs{q}^2 - \omega)$.  Under this identification   we have
\begin{equation}\label{e:Siegel ctct forms}
\tilde{\Theta}\ =\  \frac 12\big [- dp\ +\ 2dq\cdot\bar {q}\Big ]\ =\ \frac 14\Big [ (d\bar p\ -\ d p)\ +\ 2dq\cdot\bar {q}\ -\ 2q\cdot d\bar q \Big] . \end{equation}
The Cayley transform, see \cite{Ko1} and \cite{CDKR}, is the map   $\mathcal{C}:S^{4n+3}\setminus \{(-1,0) \} \rightarrow \QH$, $(-1,0)\in \Hn\times\mathbb{H}$, from the
sphere ${S^{4n+3}}\ =\ \{\abs{q'}^2+\abs{p'}^2=1 \}\subset \Hn\times\mathbb{H}$ minus a point to the Heisenberg group $
\QH$, with $\mathcal{C}$ defined by
\begin{equation}\label{d:Cayley}
 (q, p)\ =\ \mathcal{C}\ \Big ((q', p')\Big), \qquad
q\ =\ (1+p')^{-1} \ q', \qquad p\ =\ (1+p')^{-1} \ (1-p')
\end{equation}
and with an inverse map $(q', p')\ =\ \mathcal{C}^{-1}\Big ((q, p)\Big)$ given by
$
q' =  2(1+p)^{-1} \, q$, $  p' =(1+p)^{-1} \, (1-p)$.
The Cayley transform is a conformal  quaternionic contact diffeomorphism between the quaternionic
Heisenberg group with its standard quaternionic contact structure $\tilde\Theta$ and the sphere minus a point
with its standard structure $\tilde\eta$, \cite{IMV}.  In fact, by \cite[Section 8.3]{IMV} we have
\begin{equation*}
 \lambda\ \cdot (\mathcal{C}^{-1})^*\, \tilde\eta\ \cdot \bar\lambda\ =\ \frac
{8}{\abs{1+p\, }^2}\, \tilde\Theta.
\end{equation*}
\noindent where $\lambda\ = {\abs {1+p\,}}\, {(1+p)^{-1}}$ is a unit quaternion and $\tilde\eta$ is defined in \eqref{e:stand cont form on S}.

Besides the Cayley transform, we shall need the generalization of the Euclidean inversion transformation to the qc setting.  We recall that in \cite{Ko1} Kor\'anyi introduced such an inversion and an analogue of the Kelvin transform on the Heisenberg group, which were later generalized in \cite{CK} and \cite{CDKR} to all groups of Heisenberg type. The inversion and Kelvin transform enjoy useful properties in the case of the four groups of Iwasawa type of which $\QH$ is a particular case.   For our goals it is necessary to show that the inversion on $\QH$ is a qc-conformal map. In order to prove this fact we shall represent the inversion as the composition of two Cayley transforms, see \cite{IMV1,IV2} where  the seven dimensional case was used.   Let $P_1=(-1,0)$ and $P_2=(1,0)$ be correspondingly the 'south' and 'north'
poles of the unit sphere  ${S^{4n+3}}\ =\ \{\abs{q}^2+\abs{p}^2=1 \}\subset \Hn\times\mathbb{H}$.  Let $\mathcal{C}_1$ and $\mathcal{C}_2$ be the corresponding Cayley transforms defined, respectively, on $S^{4n+3}\setminus\{P_1\}$ and $S^{4n+3}\setminus\{P_2\}$. Note that $\mathcal{C}_1$ was defined in \eqref{d:Cayley}%
, while $\mathcal{C}_2$ is given by $(q, p)\ =\ \mathcal{C}_2\
\Big ((q', p')\Big)$,
\begin{equation}  \label{d:2nd Cayley}
q\ =\ -(1-p')^{-1} \ q',\quad p\ =\ (1-p')^{-1} \ (1+p'), \qquad  (q',p')\in S^{4n+3}\setminus\{P_2\}
\end{equation}
The inversion on the quaternionic Heisenberg group (with center the origin) with respect to the unit gauge sphere   is the map
\begin{equation}\label{d:inversion}
\sigma=\mathcal{C}_2\circ\mathcal{C}_1^{-1}:\QH\setminus \{ (0,0)\}
\rightarrow\QH\setminus \{ (0,0)\}.
\end{equation}
  In particular, $\sigma\ =\ \mathcal{C}_2\circ\mathcal{C}_1^{-1} $ is {an involution}
on the group.   A small calculation shows that $\sigma$ is given by the formula (in the Siegel model)
\begin{equation*}
q^*\ =\ -p^{-1}\, q,\qquad p^*\ =\ p^{-1},
\end{equation*}
or, equivalently, in the direct product model $\boldsymbol{G\,(\mathbb{H})}$
\begin{equation*}
q^*\ =\ -(|q|^2-\omega)^{-1}\, q, \qquad \omega^*\ =\ -\frac {\omega%
}{|q|^4+|\omega|^2}.
\end{equation*}
Recalling  \eqref{e:Siegel ctct forms} and  \eqref{e:Heisenberg ctct forms} it follows
\begin{equation}
\begin{aligned}
\sigma ^*\ \Theta\ =\ \frac {1}{|p|^2}
\,\bar\mu\, \Theta\, \mu, \qquad \mu\ =\ \frac {p}{|p|},\qquad \text{ (in the Siegel model)}\\
\sigma ^*\ \Theta\ =\ \frac {1}{|q|^4+|\omega|^2}
\,\bar\mu\, \Theta\, \mu, \qquad \mu\ =\ \frac {|q|^2+\omega}{\left ( |q|^4+|\omega|^2\right)^{1/2}},\qquad \text{ (in the product model)}.
\end{aligned}
\end{equation}
Thus, we have the following fundamental fact.
\begin{lemma}\label{l:inversion qc}
The inversion transformation \eqref{d:inversion} is a qc-conformal transformation on the quaternionic Heisenberg group.
\end{lemma}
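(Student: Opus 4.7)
The plan is to deduce the lemma directly from the displayed transformation law
$\sigma^*\tilde\Theta = \frac{1}{|p|^2}\,\bar\mu\,\tilde\Theta\,\mu$ with $\mu = p/|p|$
that was established just above the statement by an explicit quaternionic calculation in the Siegel model (the product-model formula being equivalent). Indeed, since $\mu$ is a unit quaternion, left-multiplication by $\bar\mu$ combined with right-multiplication by $\mu$ is conjugation by a unit quaternion, and this operation preserves the space $\text{Im}\,\mathbb{H}$ and acts on it as a rotation. Writing the $\mathbb{R}^3$-valued contact form as the column $\tilde\Theta = (\tilde\Theta_1, \tilde\Theta_2, \tilde\Theta_3)^t$, the conjugation $\tilde\Theta \mapsto \bar\mu\,\tilde\Theta\,\mu$ amounts to multiplication by a matrix $\Psi \in SO(3)$ whose entries are smooth functions of the point (they are rational in the components of $\mu$, which in turn depend smoothly on $p$ away from the origin). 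Combined with the positive smooth prefactor $\lambda = 1/|p|^2$, one obtains $\sigma^*\tilde\Theta = \lambda\,\Psi\,\tilde\Theta$, which is exactly the condition in Definition~\ref{d:3-ctct auto}. Hence $\sigma$ is a qc-conformal map on $\QH \setminus \{(0,0)\}$.

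A conceptually cleaner alternative, which I would actually present, avoids the explicit quaternionic computation entirely and uses the factorization $\sigma = \mathcal{C}_2 \circ \mathcal{C}_1^{-1}$ built into the construction. Each Cayley transform $\mathcal{C}_i$ is a qc-conformal diffeomorphism between $S^{4n+3}$ (minus the corresponding pole) and $\QH$, as recorded from \cite[Section~8.3]{IMV} in the paragraph preceding the lemma (which gives the explicit conformal factor $8/|1+p|^2$ and the rotation $\mu\cdot\bar\lambda\,\cdot\,\lambda$). Since the defining condition $F^*\eta = \mu\Psi\cdot\eta$ of Definition~\ref{d:3-ctct auto} is manifestly closed under composition and under taking inverses (the inverse conformal factor is $1/\mu \circ F^{-1}$ and the inverse rotation is $\Psi^t \circ F^{-1}$), the composition $\mathcal{C}_2 \circ \mathcal{C}_1^{-1}$ is automatically qc-conformal on its domain $\QH \setminus \{(0,0)\}$. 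The only routine verification required is that this domain coincides with the image of $S^{4n+3} \setminus \{P_1, P_2\}$ under $\mathcal{C}_1$, which follows from noting that $\mathcal{C}_1$ maps the equatorial set $\{p' = \pm 1\}$ to $\{0\}$ and to infinity on the Heisenberg side.

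The only potential subtlety, and the reason both approaches are worth stating, is bookkeeping of the $SO(3)$ factor: composing two qc-conformal maps multiplies the two $SO(3)$ matrices (after pullback), so one should check that the product is itself $SO(3)$-valued, which is immediate. There is no genuine obstacle here; the lemma is essentially a formal consequence of already-established material, and its purpose is to record the qc-conformality of $\sigma$ for use in proving the Liouville-type Theorem~\ref{l:Liouville}, where the inversion will be needed to move a point of $S^{4n+3}$ to infinity under the Cayley identification with $\QH$.
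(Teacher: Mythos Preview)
Your proposal is correct and your first approach is precisely what the paper does: the lemma is stated immediately after the displayed transformation law $\sigma^*\tilde\Theta=\frac{1}{|p|^2}\bar\mu\,\tilde\Theta\,\mu$ with the words ``Thus, we have the following fundamental fact,'' so the only content is the observation you spell out---that conjugation by the unit quaternion $\mu$ acts on $\text{Im}\,\mathbb{H}\cong\mathbb{R}^3$ as an $SO(3)$ rotation, matching Definition~\ref{d:3-ctct auto}. Your second approach via closure of qc-conformal maps under composition is a legitimate alternative (and indeed the paper announces it will use the factorization $\sigma=\mathcal{C}_2\circ\mathcal{C}_1^{-1}$), but the paper then proceeds to compute $\sigma^*\tilde\Theta$ explicitly rather than invoke closure, so your first route is the one that matches.
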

As usual, using the dilations and translations on the group, it is a simple matter to define an inversion   with respect to any gauge ball.  We recall that the gauge norm of a point $(q,\omega)\in \QH$ is $N(q,\omega)=\left ( |q|^4+|\omega|^2 \right)^{1/4}$ which allows to define a distance on the group using the translation structure.

After these preliminary facts, our next goal is to prove a version of Liouville's theorem in the case of the quaternionic Heisenberg group and the 3-Sasakian sphere equipped with their standard qc structures. In the Euclidean  case Liouville  \cite{Liu1850}, \cite{Liu1850b} showed that every sufficiently smooth conformal map ($\emph{C}^4$ in fact) between two connected open sets of the Euclidean space $\mathbb{R}^3$  is necessarily a restriction of a M\"obius transformation. The latter is the group generated by translations, dilations  and inversions of the extended Euclidean space obtained by adding an ideal point at infinity.  Liouville's result generalizes easily to any dimension $n>3$. Subsequently, Hartman \cite{Har58} gave a proof requiring only $\mathcal{C}^1$ smoothness of the conformal map, see also  \cite{Ne60},  \cite{BoIw82}, \cite{Ja91},    \cite{IwMa98} and  \cite{Fr03} for other proofs.   A CR version of Liouville's result can be found in \cite{Ta62} and \cite{Al74}. Thus, a smooth CR diffeomorphism between two connected open subsets of the $2n+1$ dimensional  sphere is  the restriction of an element from the isometry group $SU(n+1,1)$ of the unit ball equipped with the complex hyperbolic metric. The proof of Alexander \cite{Al74} relies on the extension property of a smooth CR map to a biholomorphism. Tanaka, see also \cite{Poi07}, \cite{Car32,Car32a} and \cite{ChM}, in his study of pseudo-conformal equivalence between analytic real hypersurfaces of complex space showed a more general result
 \cite[Theorem 6]{Ta62} showing that any pseudo-conformal  homeomorphism between connected open sets of the quadric
 \[
 -\sum_{i=1}^r |z_i|^2+\sum_{i=r+1}^n |z_i|^2 =1, \quad (z_1,\dots,z_n)\in \mathbb{C}^n,
 \]
is the restriction of a projective transformation of $P^{n}(\mathbb{C})$.

A new stage began with the introduction of quasiconformal maps, which imposed metric conditions on the maps,   \cite{Ge62} and\cite{Re67} and with the works of Mostow \cite{M} and Pansu \cite{P} .  In particular, in \cite{P} it was shown  that every global 1-quasiconformal map on the sphere at infinity  of each of the hyperbolic metrics is an isometry of the corresponding hyperbolic space.  The local version of the Liouville's property for  1-quasiconformal map of class $\emph{C}^4$ on the Heisenberg group  was settled in \cite{KoRe85} by a  reduction to the CR  result. The optimal regularity question for quasiconformal maps  in much greater generality, including the cases of all Iwasawa type groups, was proven later by Capogna \cite{Cap97}, see also \cite{Ta96} and \cite{CapCow06}.

 A closely related  property is the so called rigidity property of quasiconformal or {multicontact} maps, also referred to as Liouville's property, but where the question is the finite dimensionality of the group of (locally defined) quasiconformal or {multicontact} maps, see \cite{Ya93},   \cite{Rei01},  \cite{CMKR05}, \cite{Ot05}, \cite{Ot08},\cite{Mor09}, \cite{dMOt10}, \cite{lDOt11}.

  In Theorem \ref {l:Liouville} we give  a local  Liouville type property in the setting of a sufficiently smooth qc-conformal maps relying only on the qc geometry.
\subsection{Proof of Theorem \ref{l:Liouville}.}\label{ss:Liouville}
Since in the case $\Sigma={S^{4n+3}}$ there is nothing to prove we shall assume that $\Sigma\not={S^{4n+3}}$.
We shall transfer the analysis to the quaternionic Heisenberg group using the Cayley transform, thereby reducing  to the case of a qc-conformal transformation $\tilde F:\tilde \Sigma \rightarrow \QH$ between two domains of the quaternionic Heisenberg group such that $\Theta={\tilde F}^*\tilde\Theta= \frac{1}{2\mu} \tilde\Theta$ for some positive smooth function $\mu$ defined on the open set $\tilde\Sigma$.  By its definition $\Theta$ is a qc-Einstein structure,  hence the proof of \cite[Theorem 1.1]{IMV} shows that $\mu$ satisfies a system of partial differential equations whose solution is a family of polynomial of degree four. In fact,  with a small change of the parameters in \cite[Theorem 1.1]{IMV}, $\mu$ has the form
\begin{equation}\label{e:Liouville conf factor}
\mu (q,\omega) \ =\ c_0\ \Big [  \big ( \sigma\ +\
 |q+q_0|^2 \big )^2\  +\  |\omega\ +\ \omega_o\ +
\ 2\ \text {Im}\  q_o\, \bar q|^2 \Big ],
\end{equation}
for some fixed $(q_o,\omega_o)\in \QH$ and constants $c_0>0$ and $\sigma\in \mathbb{R}$. A small calculation using \eqref{e:Liouville conf factor} and the Yamabe equation  \cite[(5.8)]{IMV} shows $Scal_{\Theta}=128n(n+2)c_0\sigma $. Since $\Theta$ is qc-conformal  to $\tilde\Theta$ via the map $F$,  it follows that  $\sigma=0$. Hence, $F$ is a composition of a translation, cf. \eqref{e:Heise multipl}, followed by an inversion and a homothety, cf. Lemma \ref{l:inversion qc}.

The above analysis implies that $F$ is the restriction of an element of $PSp(n+1,1)$. This completes the proof of Theorem \ref{l:Liouville}.

Similar to the Riemannian and CR cases Theorem \ref{l:Liouville} and a standard monodromy type argument, see \cite{Ku49} and \cite[Theorem  VI.1.6]{SchYa}, show the validity of the next
\begin{thrm}\label{t:conf sphere}
If $(M,\eta)$ is a simply connected qc-conformally flat manifold of dimension $4n+3$, then there is a qc-conformal immersion $\Phi:M\rightarrow {S^{4n+3}}$, where ${S^{4n+3}}$ is the 3-Sasakian unit sphere in the $(n+1)$-dimensional quaternion space.
\end{thrm}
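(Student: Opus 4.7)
The plan is to prove Theorem \ref{t:conf sphere} by a classical developing map/monodromy argument, entirely analogous to Kuiper's construction \cite{Ku49} in the Riemannian conformally flat setting, with Theorem \ref{l:Liouville} providing the rigidity input that replaces Liouville's theorem. The hypothesis that $(M,\eta)$ is qc-conformally flat, combined with \cite[Theorem~1.3]{IV} and the qc-conformal equivalence of the quaternionic Heisenberg group with ${S^{4n+3}}\setminus\{pt\}$ via the Cayley transform, yields a cover of $M$ by connected open sets $\{U_\alpha\}$ together with qc-conformal diffeomorphisms $\phi_\alpha:U_\alpha\to V_\alpha\subset {S^{4n+3}}$.

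The first step is to fix a base point $p_0\in M$, a chart $(U_{\alpha_0},\phi_{\alpha_0})$ containing it, and then construct an analytic continuation of $\phi_{\alpha_0}$ along any continuous (piecewise smooth) path $\gamma:[0,1]\to M$ starting at $p_0$. Using compactness of $\gamma([0,1])$, I would cover $\gamma$ by finitely many connected charts $U_{\alpha_0},\dots,U_{\alpha_N}$ so that consecutive ones meet in a connected open set contained in $\gamma$'s image neighborhood. On each non-empty overlap $U_{\alpha_j}\cap U_{\alpha_{j+1}}$, the transition map $\phi_{\alpha_{j+1}}\circ\phi_{\alpha_j}^{-1}$ is a qc-conformal diffeomorphism between connected open subsets of ${S^{4n+3}}$, so by Theorem \ref{l:Liouville} it is the restriction of a unique element $F_j\in PSp(n+1,1)$. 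Replacing $\phi_{\alpha_{j+1}}$ by $F_j^{-1}\circ\phi_{\alpha_{j+1}}$ inductively, one obtains charts that agree on overlaps and hence assemble into a qc-conformal map defined on a neighborhood of $\gamma([0,1])$, thereby assigning a well-defined value at $\gamma(1)$.

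The second step is to show that this value depends only on the endpoint $\gamma(1)$, not on the chosen path or chart covering. Independence of the covering (with fixed $\gamma$) is immediate from the uniqueness clause in Theorem \ref{l:Liouville} and the connectedness of the overlaps. Independence of the path will follow from the simple connectedness of $M$: given two paths $\gamma_0,\gamma_1$ with common endpoints, a homotopy $H:[0,1]^2\to M$ between them can, by compactness, be subdivided by a sufficiently fine grid so that the image of each subsquare lies in a single chart $U_{\alpha}$; a standard square-by-square argument (using Theorem \ref{l:Liouville} at each vertex) then shows that continuation along $\gamma_0$ and along $\gamma_1$ produce the same germ at the endpoint.

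Putting these steps together produces a globally defined map $\Phi:M\to {S^{4n+3}}$ which in a neighborhood of each point agrees with a local qc-conformal diffeomorphism, hence is itself a qc-conformal immersion. The genuinely delicate point, and the only place where the specifically quaternionic contact geometry enters, is the rigidity input from Theorem \ref{l:Liouville}, which guarantees that the local transition data lies in the finite-dimensional group $PSp(n+1,1)$; everything else is a routine adaptation of the standard monodromy argument, see \cite{Ku49} and \cite[Theorem~VI.1.6]{SchYa}.
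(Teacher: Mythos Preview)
Your proposal is correct and follows exactly the approach indicated by the paper, which simply asserts that the result follows from Theorem~\ref{l:Liouville} together with a standard monodromy argument, citing \cite{Ku49} and \cite[Theorem~VI.1.6]{SchYa}. You have in fact supplied more detail than the paper itself, but the underlying strategy---local qc-conformal charts from \cite[Theorem~1.3]{IV}, transition maps rigidified to elements of $PSp(n+1,1)$ via Theorem~\ref{l:Liouville}, and a developing map built by analytic continuation along paths with homotopy-invariance from simple connectedness---is identical.
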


\subsection{Proof of Theorem \ref{main2}.}
From Lemma \ref{l:qc-Einstein}, Lemma \ref{l:Riem hessian} and the classical Obata theorem it follows  that $(M,h)$ is isometric to the unit sphere in Euclidean space, i.e., there is a diffeomorphism $i:M\rightarrow S^{4n+3}$ such that $h=i^* dx^2$, where $dx^2$ denotes the round metric on $S^{4n+3}$  {which we take to be of constant Riemannian scalar curvature $Scal^h=(4n+3)(4n+2)$. Thus, the curvature tensor $R^h$ of the Levi-Civita connection $\bi^h$ of $h$ is given by
\begin{equation}\label{h}
R^h(A,B,C,D)=h(B,C)h(A,D)-h(B,D)h(A,C).
\end{equation}
The relation between the curvature tensors of the Levi-Civita and the Biquard connection \cite[Corollary~4.13]{IMV} or  \cite[Theorem~4.4.3]{IV2} together with \eqref{h} yield
\begin{multline}\label{hb}
R(X,Y,Z,V)=g(Y,Z)g(X,V)-g(Y,V)g(X,Z)\\
+\sum_{s=1}^3\Big[\omega_s(Y,Z)\omega_s(X,V)-\omega_s(X,Z)\omega_s(Y,V)-2\omega_s(X,Y)\omega_s(Z,V)\Big].
\end{multline}
In the case $T^0=U=0, S=2$, the formula for the qc-conformal curvature tensor given in \cite[Proposition~4.2]{IV}  reads
\begin{multline}\label{qccurv}
W^{qc}(X,Y,Z,V)=\frac14\Big[R(X,Y,Z,V)+\sum_{s=1}^3R(I_sX,I_sY,Z,V)\Big]+g(X,Z)g(Y,V)-g(Y,Z)g(X,V)\\+\sum_{s=1}^3\Big[\omega_s(X,Z)\omega_s(Y,V)-\omega_s(Y,Z)\omega_s(X,V)\Big].
\end{multline}
With  a small calculation we see from \eqref{qccurv}, taking into account  \eqref{hb},  that the qc-conformal curvature tensor vanishes, $W^{qc}=0$  and $(M,g,\eta,\mathbb Q)$  is locally qc-conformal to the sphere due to \cite[Theorem~1.3]{IV}. Hence, taking into account Theorem \ref{t:conf sphere}, $(M,g,\eta,\mathbb Q)$  is qc-conformal to ${S^{4n+3}}$, i.e.,  $\eta=\kappa \Psi  F^*\tilde\eta$ for some diffeomorphsm $F:M\rightarrow S^{4n+3}$. Comparing the metrics we obtain $\kappa=1$ which shows that $M$ is qc-homothetic to  the 3-Sasakian unit sphere in the $(n+1)$-dimensional quaternion space. } This completes the proof of Theorem~\ref{main2}. The proof  of Theorem~\ref{main1} follows as already noted after the statement of Theorem~\ref{main2}.

\section{Appendix}

\subsection{The $P-$form}

Let $(M,g,\mathbb{Q})$ be a compact quaternionic contact manifold of
dimension $4n+3$ and $f$ a smooth function on $M$. We recall the notion of a
$P$-function introduced in \cite{IPV2}

\begin{dfn}\label{d:def P}[\cite{IPV2}]
\begin{enumerate}[a)]
\item For a fixed $f$ we define a one form $P\equiv P_f \equiv P[f]$ on $M$,
which we call the $P-$form of $f$, by the following equation
\begin{multline*}
P_f(X) =\nabla ^{3}f(X,e_{b},e_{b})+\sum_{t=1}^{3}\nabla
^{3}f(I_{t}X,e_{b},I_{t}e_{b})-4nSdf(X)+4nT^{0}(X,\nabla f)-\frac{8n(n-2)}{n-1%
}U(X,\nabla f).
\end{multline*}

\item The $P-$function of $f$ is the function $P_f(\nabla f)$.

\item The $C-$operator is the fourth-order differential operator on $M$
(independent of $f$!) defined by
\begin{equation*}
Cf =-\nabla^* P_f=(\nabla_{e_a} P_f)\,(e_a).
\end{equation*}

\item We say that the $P-$function of $f$ is non-negative if its integral
exists and is non-positive
\begin{equation}  \label{e:non-negative Paneitz}
\int_M f\cdot Cf \, Vol_{\eta}= -\int_M P_f(\nabla f)\, Vol_{\eta}\geq 0.
\end{equation}
If \eqref{e:non-negative Paneitz} holds for any smooth function of compact
support we say that the $C-$operator is non-negative.
\end{enumerate}
\end{dfn}

The $Sp(n)Sp(1)$-invariant decomposition of the horizontal Hessian $%
\nabla^2f $ are given by
\begin{equation}  \label{comp}
\begin{aligned}
(\nabla^2f)_{[3]}(X,Y)=\frac14\Big[\nabla^2f(X,Y)+\sum_{s=1}^3%
\nabla^2f(I_sX,I_sY)\Big]\\
(\nabla^2f)_{[-1]}(X,Y)=\frac14\Big[3\nabla^2f(X,Y)-\sum_{s=1}^3%
\nabla^2f(I_sX,I_sY)\Big]. \end{aligned}
\end{equation}
Let $(\nabla^2f)_{[3][0]}$ be the trace-free part of the 3-component of the
horizontal Hessian,
\begin{equation}  \label{np1}
(\nabla^2f)_{[3][0]}(X,Y)=(\nabla^2f)_{[3]}(X,Y)+\frac1{4n}\triangle fg(X,Y).
\end{equation}
The next local formula, established in \cite{IPV2},
\begin{equation}  \label{panon}
(\nabla_{e_a}(\nabla^2f)_{[3][0]})(e_a,X)=\frac{n-1}{4n}P_f(X).
\end{equation}
implies the non-negativity of the $C-$operator on a compact qc manifold of
dimension at least eleven \cite[Theorem~3.3]{IPV2}. Indeed, using %
\eqref{panon} we have
\begin{equation}  \label{pppp}
\frac{n-1}{4n}\int_Mf.Cf\, Vol_{\eta}=-\frac{n-1}{4n}\int_MP_f(\nabla f)\,
Vol_{\eta}=\int_M|(\nabla^2f)_{[3][0]}|^2\, Vol_{\eta},
\end{equation}
after using an integration by parts and the orthogonality of the components of the
horizontal Hessian.

\subsection{A new proof of Theorem~\protect\ref{mainpan}}

Here we use the non-negativity of the $P$-function $P(\nabla f)$ of a smooth
function $f$ established in \cite[Theorem~3.3]{IPV2} to give a new proof of
Theorem~\ref{mainpan}.

\begin{proof}
Let $f$ be an eigenfunction of the sub-Laplacian with eigenvalue $%
\lambda $, i.e., we assume that \eqref{eig} holds. An integration by parts
gives
\begin{equation}
\int_{M}(\triangle f)^{2}\,\,Vol_{\eta }=\lambda \int_{M}f\triangle
f\,\,Vol_{\eta }=\lambda \int_{M}|\nabla f|^{2}\,\,Vol_{\eta }.
\label{parts}
\end{equation}
We recall the qc-Bochner identity \cite[(4.1)]{IPV1}. Applying the first
equality in \eqref{sixtyfour}, \eqref{need1} and the properties of the
torsion, \eqref{propt}, we can write the qc-Bochner formula \cite[(4.1)]{IPV1}
in the form
\begin{eqnarray}  \label{bohS}
\frac12\triangle |\nabla f|^2=|\nabla^2f|^2-g\left (\nabla (\triangle f),
\nabla f \right )+2(n+2)S|\nabla f|^2+2(n+2)T^0(\nabla f,\nabla f) \\
+2(2n+2)U(\nabla f,\nabla f)+ 4\sum_{s=1}^3\nabla^2f(\xi_s,I_s\nabla f) .
\notag
\end{eqnarray}
One of the key identities which relates the P-function and the qc-Bochner
formula \eqref{bohS} is given by the following equation \cite[Lemma~3.2]{IPV2}
\begin{equation}  \label{e:gr4}
\int_M\sum_{s=1}^3\nabla^2f(\xi_s,I_s\nabla f)\, Vol_{\eta}=\int_M\Big[-%
\frac{1}{4n}P_n(\nabla f)-\frac{1}{4n}(\triangle f)^2-S|\nabla f|^2+\frac{%
(n+1)}{n-1}U(\nabla f,\nabla f)\Big]\, Vol_{\eta}.
\end{equation}
An integration of \eqref{bohS} over the compact $M$, followed by a substitution of \eqref{eig}
and \eqref{e:gr4} in the obtained integral equality, and then a use of the
divergence formula \eqref{div}  give
\begin{eqnarray}  \label{inteq}
0=\int_M\Big[|\nabla^2f|^2-\lambda|\nabla f|^2+2nS|\nabla
f|^2+2(n+2)T^0(\nabla f,\nabla f)+\frac{4n(n+1)}{n-1}U(\nabla f,\nabla f) \\
-\frac{1}{n}P_n(\nabla f)-\frac{1}{n}(\triangle f)^2\Big]\,\,Vol_{\eta } .
\notag
\end{eqnarray}
The latter formula can be written in the form
\begin{eqnarray}  \label{inteq1}
0=\int_M\Big\{|\nabla^2f|^2-\lambda|\nabla f|^2-S|\nabla f|^2+T^0(\nabla
f,\nabla f)-\frac{2(n-2)}{n-1}U(\nabla f,\nabla f)  \notag \\
+\frac{2n+1}{2(n+2)}\Big[2(n+2)S|\nabla f|^2 +\frac{4n^2+14n+12}{2n+1}%
T^0(\nabla f,\nabla f)+\frac{4(n+2)^2(2n-1)}{(n-1)(2n+1)}U(\nabla f,\nabla f)%
\Big] \\
-\frac{1}{n}P_n(\nabla f)-\frac{1}{n}(\triangle f)^2\Big\}\,\,Vol_{\eta}.
\notag
\end{eqnarray}
Now we invoke the next integral identity proved in \cite[Lemma~3.4]{IPV1}
\begin{equation}  \label{2}
\int_M\sum_{s=1}^3\nabla^2f(\xi_s,I_s\nabla f)\, Vol_{\eta}=-\int_M\Big[%
4n\sum_{s=1}^3(df(\xi_s))^2 +\sum_{s=1}^3T(\xi_s,I_s\nabla f,\nabla f)\Big] %
\, Vol_{\eta}.
\end{equation}
From \eqref{2} and \eqref{e:gr4} it follows the equality
\begin{eqnarray}  \label{inteq2}
\int_M\Big[-S|\nabla f|^2+T^0(\nabla f,\nabla f)-\frac{2(n-2)}{n-1}U(\nabla
f,\nabla f)\Big]\,\,Vol_{\eta}=\int_M\Big\{\frac{1}{4n}P_n(\nabla f)+\frac{1%
}{4n}(\triangle f)^2 \\
-\frac{1}{4n}\sum_{s=1}^{3}[g(\nabla^2f,\omega_s)]^2\Big\}\,\,Vol_{\eta} .
\notag
\end{eqnarray}
A substitution of \eqref{inteq2} in \eqref{inteq1} yields
\begin{multline}  \label{intin1}
0=\int_M\Big\{|\nabla^2f|^2-\frac{1}{4n}\Big[(\triangle
f)^2+\sum_{s=1}^{3}[g(\nabla^2f,\omega_s)]^2\Big]-\frac{3}{4n}P_n(\nabla f)
\\
+ \frac{2n+1}{2}\Big[2S|\nabla f|^2 +\frac{4n^2+14n+12}{(2n+1)(n+2)}%
T^0(\nabla f,\nabla f)+\frac{4(n+2)(2n-1)}{(n-1)(2n+1)}U(\nabla f,\nabla f)-%
\frac{\lambda}{n}|\nabla f|^2\Big]\Big\}\,\,Vol_{\eta}.
\end{multline}
The equality \eqref{intin1}, the Lichnerowicz type assumption \eqref{condm-app} and
\eqref{parts} imply the inequality
\begin{multline}  \label{intin}
0\geq\int_M\Big\{|\nabla^2f|^2-\frac{1}{4n}\Big[(\triangle
f)^2+\sum_{s=1}^{3}[g(\nabla^2f,\omega_s)]^2\Big]-\frac{3}{4n}P_n(\nabla f)+%
\frac{2n+1}{2}\Big(\frac{k_0}{n+2}-\frac{\lambda}{n}\Big)|\nabla f|^2\Big\}%
\,\,Vol_{\eta}.
\end{multline}
Note that in the proof of  \eqref{intin} we supposed
implicitly that $n>1$. But it works also for $n=1$, when $U=0$ trivially, we have only to remove the
torsion tensor $U$ (cf. \cite{IPV2}).

Using that $\left\{ \frac{1}{2\sqrt{n}}\omega _{s}\right\} $ is an
orthonormal set in $\Psi _{\lbrack -1]}$ we have
\begin{equation}
|(\nabla ^{2}f)_{[-1]}|^{2}\geq \frac{1}{4n}\sum_{s=1}^{3}\left[ g\left(
\nabla ^{2}f,\omega _{s}\right) \right] ^{2}  \label{coshy1}
\end{equation}%
while a projection on $\left\{ \frac{1}{2\sqrt{n}}g\right\} $ gives
\begin{equation}
|(\nabla ^{2}f)_{[3]}|^{2}\geq \frac{1}{4n}(\triangle f)^{2}.  \label{coshy3}
\end{equation}
Next, using the $Sp(n)Sp(1)$-invariant orthogonal decomposition \eqref{comp}
of horizontal Hessian and the estimates \eqref{coshy1} and \eqref{coshy3},
we obtain the inequality
\begin{equation}  \label{coshy13}
|\nabla^2f|^2\geq\frac{1}{4n}\Big[(\triangle
f)^2+\sum_{s=1}^{3}[g(\nabla^2f,\omega_s)]^2\Big].
\end{equation}
Finally, using the non-negativity of the $P$-function  for $n>1$, see
\eqref{pppp}, we obtain from \eqref{intin} the desired estimate
\begin{equation*}
\lambda\geq\frac{n}{n+2}k_0.
\end{equation*}
\end{proof}

\begin{cor}
\label{imp} If the case of equality in Theorem~\ref{main1} holds,
i.e., we have
\begin{equation*}  \label{eq1}
\lambda = \frac{n}{n+2}k_0, \qquad \triangle f=\frac{n}{n+2}k_0 f,
\end{equation*}
then the horizontal Hessian of the eigenfunction $f$ is given by \eqref{eq07}%
.
\end{cor}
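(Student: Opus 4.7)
The plan is to revisit the just-completed proof of Theorem~\ref{mainpan} in the equality case $\lambda=\frac{n}{n+2}k_{0}$ and read off the Hessian formula from the saturated estimates. In the equality case, inequality \eqref{intin} becomes the equation
\begin{multline*}
0=\int_M\Big\{|\nabla^2f|^2-\frac{1}{4n}\Big[(\triangle f)^2+\sum_{s=1}^{3}[g(\nabla^2f,\omega_s)]^2\Big]\\
-\frac{3}{4n}P_f(\nabla f)+\frac{2n+1}{2}\big(L(\nabla f,\nabla f)-k_0|\nabla f|^2\big)\Big\}\,Vol_{\eta},
\end{multline*}
where every term in the integrand is non-negative: the first bracket is non-negative by \eqref{coshy13}, the $P$-function term by \eqref{pppp}, and the last by the Lichnerowicz assumption \eqref{condm-app}. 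Thus each of the three non-negative terms must vanish pointwise; in particular, equality must hold in both \eqref{coshy1} and \eqref{coshy3} at every point of $M$.

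Next I would use the $Sp(n)Sp(1)$-invariant decomposition \eqref{comp}, $\nabla^2f=(\nabla^2f)_{[3]}+(\nabla^2f)_{[-1]}$, which is orthogonal, so $|\nabla^2f|^2=|(\nabla^2f)_{[3]}|^2+|(\nabla^2f)_{[-1]}|^2$. Inequality \eqref{coshy3} is the Cauchy--Schwarz bound on the orthogonal projection of $(\nabla^2f)_{[3]}$ onto the line spanned by $g$ in the space of $[3]$-tensors; equality forces $(\nabla^2f)_{[3]}$ to be a multiple of $g$. Taking the trace over $X=Y=e_a$ in \eqref{comp} and using $\triangle f=-\nabla^2f(e_a,e_a)$ identifies this multiple and yields
\[
(\nabla^2f)_{[3]}(X,Y)=-\frac{\triangle f}{4n}\,g(X,Y).
\]
Similarly, inequality \eqref{coshy1} is the Cauchy--Schwarz bound on the projection of $(\nabla^2f)_{[-1]}$ onto the three-dimensional subspace spanned by $\{\omega_1,\omega_2,\omega_3\}$ (each $\omega_s$ lies in the $[-1]$-component); equality forces $(\nabla^2f)_{[-1]}$ to lie in this span. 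Writing $(\nabla^2f)_{[-1]}=\sum_{s}c_{s}\omega_{s}$ and testing against $\omega_t$, using that $\{\tfrac{1}{2\sqrt{n}}\omega_{s}\}$ is orthonormal together with \eqref{xi1}, gives $4n\,c_{t}=g(\nabla^2f,\omega_{t})=-4n\,df(\xi_{t})$, so $c_{t}=-df(\xi_{t})$.

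Combining the two components and substituting $\triangle f=\frac{n}{n+2}k_{0}f$ gives
\[
\nabla df(X,Y)=-\frac{k_{0}}{4(n+2)}\,f\,g(X,Y)-\sum_{s=1}^{3}df(\xi_{s})\,\omega_{s}(X,Y),
\]
which is exactly \eqref{eq07}. Since each step is an immediate consequence of an equality case already established, I expect no real obstacle here; the main subtlety is simply to observe that all three non-negative integrands must vanish \emph{individually} (not just in sum), and this is clear because each is a non-negative pointwise quantity.
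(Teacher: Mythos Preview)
Your argument is correct and follows the same route as the paper's (very terse) proof: from the identity \eqref{intin1} in the equality case one forces equality in \eqref{coshy1} and \eqref{coshy3}, and then reads off the two components of the Hessian. Two small points of precision: the coefficient in front of $L(\nabla f,\nabla f)-k_0|\nabla f|^2$ should be $\tfrac{2n+1}{2(n+2)}$ rather than $\tfrac{2n+1}{2}$ (harmless, since only the sign matters), and the $P$-function term is not known to be non-negative \emph{pointwise}---\eqref{pppp} only gives $-\int_M P_f(\nabla f)\,Vol_\eta\ge 0$. This does not affect your conclusion: the integral splits as a sum of three non-negative integrals, so each vanishes, and since the first bracket is pointwise non-negative its vanishing integral forces pointwise equality in \eqref{coshy1} and \eqref{coshy3}, exactly as you then use.
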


\begin{proof}
The result follows from \eqref{intin1}, \eqref{condm-app} and \eqref{coshy13}
which asserts that the equalities in \eqref{coshy1} and \eqref{coshy3} must
hold, which imply  \eqref{eq07} .
\end{proof}

\end{document}